\theoremstyle{plain}
\newtheorem{theorem}{Theorem}[section]
\newtheorem{lemma}[theorem]{Lemma}
\newtheorem{proposition}[theorem]{Proposition}
\newtheorem{corollary}[theorem]{Corollary}
\newtheorem{claim}[theorem]{Claim}
\newcommand{\U}{\mathscr{U}}
\theoremstyle{definition}
\newtheorem{definition}{Definition}[section]
\newtheorem{remark}[definition]{Remark}
\newcommand{\bbE}{{\ensuremath{\mathbb{E}}} }
\newcommand{\bbN}{{\ensuremath{\mathbb{N}}} }
\newcommand{\bbZ}{{\ensuremath{\mathbb{Z}}} }
\newcommand{\Z}{{\ensuremath{\mathbb{Z}}} }
\newcommand{\E}{{\ensuremath{\mathbb{E}}} }
\newcommand{\1}{{\ensuremath{\mathbf{1}}} }
\newcommand{\N}{{\ensuremath{\mathbb{N}}} }
\newcommand{\M}{{\ensuremath{\mathscr{M}}} }
\newcommand{\e}{\varepsilon}
\newcommand{\pmkcor}[1]{\left\updownarrow{#1}\right\updownarrow}
\newcommand{\pkcor}[1]{\left\uparrow{#1}\right\uparrow}
\newcommand{\mkcor}[1]{\left\downarrow{#1}\right\downarrow}
\newcommand{\pMmkcor}[1]{\left\uparrow{#1}\right\downarrow}
\newcommand{\ep}{\mathcal{E}_j}
\DeclareMathOperator{\sgn}{sgn}
\renewcommand{\le}{\leqslant}
\renewcommand{\ge}{\geqslant}
\renewcommand{\leq}{\leqslant}
\renewcommand{\geq}{\geqslant}
\newtheorem{ques}{Question}
\begin{document}

\title{Improved bounds in the metric cotype inequality\\ for Banach spaces}
\author{Ohad Giladi}
\address{Courant Institute\\ New York University}
\email{giladi@cims.nyu.edu}
\author{Manor Mendel}
\address {Computer Science Division\\
The Open University of Israel}
\email{mendelma@gmail.com}
\author{Assaf Naor}
\address{Courant Institute\\ New York University}
\email{naor@cims.nyu.edu}
\subjclass[2010]{46B80,46B85,51F99,05C12}

\begin{abstract}
It is shown that if $(X,\|\cdot\|_X)$ is a Banach space with Rademacher cotype $q$ then for every integer $n$ there exists an even integer $m\lesssim n^{1+\frac{1}{q}}$ such that for every $f:\Z_m^n\to X$ we have
\begin{equation}\label{eq:our abstract}
\sum_{j=1}^n\E_x\Bigg[\left\|f\left(x+\frac{m}{2}e_j\right)-f(x)\right\|_X^q\Bigg]\lesssim m^q\E_{\e,x}\Big[\left\|f(x+\e)-f(x)\right\|_X^q\Big],
\end{equation}
where the expectations are with respect to uniformly chosen $x\in \Z_m^n$ and $\e\in \{-1,0,1\}^n$, and all the implied constants may depend only on $q$ and the Rademacher cotype $q$ constant of $X$. This improves the bound of $m\lesssim n^{2+\frac{1}{q}}$ from~\cite{MN08}. The proof of~\eqref{eq:our abstract} is based on a ``smoothing and approximation" procedure which simplifies the proof of the metric characterization of Rademacher cotype of~\cite{MN08}. We also show that any such ``smoothing and approximation" approach to metric cotype inequalities must require $m\gtrsim n^{\frac12+\frac{1}{q}}$.
\end{abstract}

\maketitle

\tableofcontents

\section{Introduction}

A metric space $(\M,d_\M)$ is said~\cite{MN08} to have metric cotype $q>0$ with constant $\Gamma>0$ if for every integer $n$ there exists an even integer $m$ such that for every $f:\Z_m^n\to X$ we have
\begin{equation}\label{eq:def metric cotype}
\sum_{j=1}^n\E_x\Bigg[d_\M\left(f\left(x+\frac{m}{2}e_j\right),f(x)\right)^q\Bigg]\le \Gamma^q m^q\E_{\e,x}\Big[d_\M\left(f(x+\e),f(x)\right)^q\Big].
\end{equation}
In~\eqref{eq:def metric cotype} the expectations are taken with respect to $x$ chosen uniformly at random from the discrete torus $\Z_m^n$, and $\e$ chosen uniformly at random from $\{-1,0,1\}^n$ (the $\ell_\infty$ generators of $\Z_m^n$). Also, in~\eqref{eq:def metric cotype} and in what follows, $\{e_j\}_{j=1}^n$ denotes the standard basis of $\Z_m^n$.

A Banach space $(X,\|\cdot\|_X)$ is said to have Rademacher cotype $q>0$ if there exists a constant $C<\infty$ such that for every $n \in \bbN$ and for every $x_1,x_2,....,x_n\in{X}$,
\begin{equation}
\label{defradcotype} \sum_{j=1}^{n}\|x_{j}\|_{X}^{q} \le C^q \bbE_\e\left[\Big\|\sum_{j=1}^{n}\e_{j}x_{j}\Big\|_{X}^{q}\right].
\end{equation}
$X$ is said to have Rademacher type $p>0$ if there exists a constant $T<\infty$ such that for every $n \in \bbN$ and for every $x_1,x_2,....,x_n\in{X}$,
\begin{equation}
\label{defradtype}    \bbE_\e\left[\Big\|\sum_{j=1}^{n}\e_{j}x_{j}\Big\|_{X}^{p}\right]\leq T^{p}\sum_{j=1}^{n}\|x_{j}\|_{X}^{p}.
\end{equation}
The smallest possible constants $C,T$ in~\eqref{defradcotype}, \eqref{defradtype} are denoted $C_p(X),T_p(X)$, respectively. We refer to~\cite{Pis86,Mau03} for more information on the notions of type and cotype, though the present paper requires minimal background of this theory. We shall use throughout standard Banach space notation and terminology, as appearing in, say, \cite{Woj91}.

The following theorem was proved in~\cite{MN08}:
\begin{theorem}[\cite{MN08}]\label{thm:MN} A Banach space $(X,\|\cdot \|_X)$ has Rademacher cotype $q$ if and only if it has metric cotype $q$.
\end{theorem}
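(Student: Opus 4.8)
Theorem~\ref{thm:MN} is an equivalence, so there are two implications to prove. The implication ``metric cotype $q\Rightarrow$ Rademacher cotype $q$'' is the soft one. Given $x_1,\dots,x_n\in X$, I would feed into~\eqref{eq:def metric cotype} the product-type test map $f:\Z_m^n\to X$, $f(y)=\sum_{j=1}^n\cos(2\pi y_j/m)\,x_j$. Since $\cos\big(\tfrac{2\pi}{m}(y_j+\tfrac m2)\big)=-\cos(2\pi y_j/m)$, the half-period shift makes the left-hand side of~\eqref{eq:def metric cotype} equal to $2^q\sum_{j=1}^n\E_{x_j}\big[|\cos(2\pi x_j/m)|^q\big]\,\|x_j\|_X^q\ge c_q\sum_{j=1}^n\|x_j\|_X^q$ for a constant $c_q>0$ (uniform over even $m$). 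For the $\e$-shift, a trigonometric identity writes $f(y+\e)-f(y)$ as $\tfrac1m$ times a random signed sum of the $x_j$ with coefficients of modulus $\le1$, plus a remainder of order $\tfrac1{m^2}$; isolating these signs and invoking the Kahane contraction principle bounds $\E_{\e,y}\|f(y+\e)-f(y)\|_X^q$ by $\tfrac{C}{m^q}\,\E_\e\big\|\sum_{j=1}^n\e_jx_j\big\|_X^q$ with $C$ depending only on $q$. Inserting both estimates into~\eqref{eq:def metric cotype} yields~\eqref{defradcotype}.

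The substance of the theorem is the reverse implication ``Rademacher cotype $q\Rightarrow$ metric cotype $q$'', which is exactly~\eqref{eq:our abstract}, and I would prove it by the ``smoothing and approximation'' route. The observation behind it is that~\eqref{eq:our abstract} is essentially free for \emph{smooth} $f$: writing $\partial_jf(x):=f(x+e_j)-f(x)$, telescoping $f(x+\tfrac m2 e_j)-f(x)=\sum_{s=0}^{m/2-1}\partial_jf(x+se_j)$, applying the power-mean inequality, and using translation invariance of the uniform measure on $\Z_m^n$ gives
\[
\sum_{j=1}^n\E_x\Big[\big\|f\big(x+\tfrac m2 e_j\big)-f(x)\big\|_X^q\Big]\;\le\;\Big(\tfrac m2\Big)^q\,\E_x\Big[\textstyle\sum_{j=1}^n\|\partial_jf(x)\|_X^q\Big].
\]
Applying Rademacher cotype~\eqref{defradcotype} to $\partial_1f(x),\dots,\partial_nf(x)$ for fixed $x$ bounds $\sum_j\|\partial_jf(x)\|_X^q$ by $C_q(X)^q\,\E_\e\big\|\sum_j\e_j\partial_jf(x)\big\|_X^q$ (the passage between $\{-1,0,1\}^n$-averages and Rademacher averages is routine); if $f$ is smooth enough that $f(x+\e)-f(x)\approx\sum_j\e_j\partial_jf(x)$ with controlled error, the right-hand side is dominated by $\E_{\e,x}\|f(x+\e)-f(x)\|_X^q$ and~\eqref{eq:our abstract} follows with \emph{no} constraint on $m$. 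The entire task is to remove the smoothness hypothesis.

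To that end I would replace $f$ by its convolution $S_tf$ with the uniform measure on the box $\{-t,\dots,t\}^n$ (equivalently, average over a short lazy $\{-1,0,1\}^n$-walk) and prove two complementary estimates. \emph{Approximation:} a point of the box is reached from the origin in $t$ rounds, each a genuine uniformly distributed $\{-1,0,1\}^n$-step issued from a still-uniform point, so telescoping and the power-mean inequality give $\E_x\|f(x)-S_tf(x)\|_X^q\lesssim t^q\,\E_{\e,x}\|f(x+\e)-f(x)\|_X^q$; since the left-hand side of~\eqref{eq:our abstract} is a sum of $n$ such quantities, replacing $f$ by $S_tf$ there costs $\lesssim n t^q\,\E_{\e,x}\|f(x+\e)-f(x)\|_X^q$, absorbed by $m^q\,\E_{\e,x}\|f(x+\e)-f(x)\|_X^q$ once $m\gtrsim t\,n^{1/q}$. \emph{Smoothing:} since a partial sum over one box-coordinate telescopes to its two endpoints, each discrete derivative applied to $S_tf$ extracts a factor $\tfrac1{2t+1}$; iterating, an $\ell$-th mixed difference $\Delta_SS_tf$ with $|S|=\ell$ equals $(2t+1)^{-\ell}$ times an $\ell$-th mixed difference of $f$ over increments of length $\sim 2t$, and re-routing the long single-coordinate moves of $f$ that appear through random $\{-1,0,1\}^n$-moves --- conditioning one coordinate, which costs only $O(1)$, and letting the rest wander while the uniform base point is averaged away --- bounds $\E_x\|\Delta_SS_tf(x)\|_X^q$ by $C\,(C/t)^{(\ell-1)q}\,\E_{\e,x}\|f(x+\e)-f(x)\|_X^q$. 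Running the smooth-case argument for $S_tf$, using this decay to control the error $S_tf(x+\e)-S_tf(x)-\sum_j\e_j\partial_jS_tf(x)=\sum_{|S|\ge2}\big(\prod_{i\in S}\e_i\big)\Delta_SS_tf(x)$, and using Jensen for $\E_{\e,x}\|S_tf(x+\e)-S_tf(x)\|_X^q\le\E_{\e,x}\|f(x+\e)-f(x)\|_X^q$, one gets~\eqref{eq:our abstract} once $t$ exceeds a suitable power of $n$; optimizing $t$ against $m\gtrsim t\,n^{1/q}$ gives $m\lesssim n^{1+1/q}$.

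I expect the crux to be the smoothing estimate, and in particular the control of the chaos $\sum_{|S|\ge2}\big(\prod_{i\in S}\e_i\big)\Delta_SS_tf(x)$. Bounding it by the triangle inequality term by term loses a factor $\binom n\ell$ at level $\ell$, which against the $(2t+1)^{-(\ell-1)q}$ gain would force $t\gtrsim n^2$ --- and hence only the weaker bound $m\lesssim n^{2+1/q}$ of~\cite{MN08}; reaching $n^{1+1/q}$ requires genuinely exploiting the random signs $\e_i$ (decoupling, together with the recursive identity $\Delta_SS_tf=\partial_j\big(\Delta_{S\setminus\{j\}}S_tf\big)$ and the fact that each first derivative $\partial_iS_tf$ is itself comparable to a single $\{-1,0,1\}^n$-difference of $f$). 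A further recurring difficulty is the mismatch between single-coordinate moves and a uniform $\e\in\{-1,0,1\}^n$, which has about $\tfrac{2n}{3}$ nonzero coordinates: every pure $e_j$-move of $f$ that the argument produces must be re-routed through random $\e$-moves, and making these reroutings lossless --- cost $O(1)$, not $n^{O(1)}$ --- while respecting the torus structure is the point where the construction has to be executed with care.
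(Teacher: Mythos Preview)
Your high-level architecture for the hard implication is exactly the paper's: Section~\ref{sec:scheme} abstracts it as a ``$(q,A,S)$-smoothing and approximation scheme'' --- convolve with kernels, apply Rademacher cotype to the discrete derivatives of the smoothed function, telescope along each axis, and control the approximation error by the edge energy. Your easy-direction sketch via $f(y)=\sum_j\cos(2\pi y_j/m)\,x_j$ is also correct; the paper does not reprove that direction, simply citing~\cite{MN08}.

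Where you diverge is in the choice of kernel and in the mechanism for the smoothing estimate. You propose a \emph{single} box kernel $S_t$ (uniform on $\{-t,\dots,t\}^n$) used for every coordinate, and you plan to control $\sum_j\e_j\partial_jS_tf$ by writing $S_tf(x+\e)-S_tf(x)$ as a chaos expansion and bounding the chaoses of order $\ge 2$ via decoupling. The paper instead uses $n$ \emph{distinct} kernels $\nu_j$: each $\nu_j$ is uniform on the set $S(j,k)\subset[-k,k]^n$ in which the $j$-th coordinate is \emph{even} and all other coordinates are \emph{odd}. This parity design is the engine: it forces an exact cancellation identity (Lemma~\ref{prop:sumepsilon}) rewriting $\sum_j\e_j\bigl[\mathcal E_jf(x+e_j)-\mathcal E_jf(x-e_j)\bigr]$ as a weighted sum of $f(y)$ over $y$ near $x$, with weight equal to a signed count of how many coordinates of $(y-x)\odot\e$ hit the boundary $\pm k$. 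That count is then decomposed (Lemma~\ref{prop3}) with coefficients that turn out to be \emph{bivariate Bernoulli numbers}; their factorial decay $|h_{\alpha,\beta}|\lesssim(\alpha-\beta)!\,\beta!/2^{\alpha}$ is precisely what defeats the $\binom{n}{\ell}$ blow-up you correctly flagged, yielding $S\lesssim_q 1$ once $k\gtrsim n$, hence $A\lesssim n$ and $m\lesssim n^{1+1/q}$.

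Your chaos-plus-decoupling route may well be viable, but it is neither developed in your sketch nor what the paper does; the paper's argument never passes through a chaos expansion of $S_tf(x+\e)-S_tf(x)$. For the \emph{qualitative} Theorem~\ref{thm:MN} your proposal already suffices (even the triangle-inequality version you describe gives some finite $m$, which is all that statement requires). For the quantitative $m\lesssim n^{1+1/q}$ of Theorem~\ref{maintheorem}, the parity-kernel and Bernoulli-number mechanism is the actual proof, and it is genuinely different from the one you outlined.
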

Thus, for Banach spaces the {\em linear} notion of Rademacher cotype $q$ is equivalent to the notion of metric cotype $q$, which ignores all the structure of the Banach space except for its metric properties.
Theorem~\ref{thm:MN} belongs to a comprehensive program, first formulated by Bourgain in~\cite{Bou86}, which is known as the Ribe program, whose goal is to recast the local theory of Banach spaces as a purely metric theory. A byproduct of this program is that linear properties such as Rademacher cotype can be made to make sense in general metric spaces, with applications to metric geometry in situations which lack any linear structure. We refer to~\cite{MN08} and the references therein for more information on the Ribe program and its applications.

Definition~\eqref{eq:def metric cotype} and Theorem~\ref{thm:MN} suppress the value of $m$, since it is irrelevant for the purpose of a metric characterization of Rademacher cotype. Nevertheless, good bounds on $m$ are important for applications of metric cotype to embedding theory, some of which will be recalled in Section~\ref{sec:embedding}. It was observed in~\cite{MN08} that if the metric space $\M$ contains at least two points then the value of $m$ in~\eqref{eq:def metric cotype} must satisfy $m\gtrsim n^{1/q}$ (where the implied constant depends only on $\Gamma$). If $X$ is a Banach space with Rademacher type $p>1$ and Rademacher cotype $q$, then it was shown in~\cite{MN08} that $X$ satisfies the metric cotype $q$ inequality~\eqref{eq:def metric cotype} for every $m\ge n^{1/q}$ (in which case $\Gamma$ depends only on $p,q,T_p(X),C_q(X)$). Such a sharp bound on $m$ is crucial for certain applications~\cite{MN08,Nao06} of metric cotype, and perhaps the most important open problem in~\cite{MN08} is whether this sharp bound on $m$ holds true even when the condition that $X$ has type $p>1$ is dropped. The bound on $m$ from~\cite{MN08} in Theorem~\ref{thm:MN}  is $m\gtrsim n^{2+\frac{1}{q}}$. Our main result improves this bound to $m\gtrsim n^{1+\frac{1}{q}}$:
\begin{theorem}
\label{maintheorem}
Let $X$ be a Banach space with Rademacher cotype $q\ge 2$. Then for every $n\in\bbN$, every integer $m\ge 6n^{1+\frac{1}{q}}$ which is divisible by $4$, and every $f:\bbZ_{m}^{n}\rightarrow X$, we have
\begin{eqnarray}\label{eq:in thm}
\sum_{j=1}^n\E_x\Bigg[\left\|f\left(x+\frac{m}{2}e_j\right)-f(x)\right\|_X^q\Bigg]\lesssim_X m^q\E_{\e,x}\Big[\left\|f(x+\e)-f(x)\right\|_X^q\Big].
\end{eqnarray}
\end{theorem}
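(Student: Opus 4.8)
The plan is to establish \eqref{eq:in thm} via a ``smoothing and approximation'' scheme that replaces $f$ by a smoothed version $F$ which is Lipschitz in a quantitative sense adapted to the discrete torus, and for which the desired inequality can be verified by a direct Fourier-analytic argument, in the spirit of the original proof of Theorem~\ref{thm:MN} but with the smoothing chosen more economically so as to lose only one power of $n$ rather than two. Concretely, I would fix a parameter $t\sim n$ (to be tuned) and define $F(x)=\E_{z}\bigl[f(x+z)\bigr]$, where $z$ ranges over a suitable averaging region in $\Z_m^n$ — for instance $z$ uniform in $\{-t,\dots,t\}^n$, or a random walk of length roughly $t$ using the $\e$-generators. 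The point of the averaging is twofold: first, $\|F(x+e_j)-F(x)\|_X$ and more generally $\|F(x+\frac m2 e_j)-F(x)\|_X$ become controllable by telescoping along a path of $\e$-steps, so that the left-hand side of \eqref{eq:in thm} for $F$ is bounded by the right-hand side of \eqref{eq:in thm} for $f$ with the loss tracked explicitly; second, $F$ inherits enough smoothness that $\|F(x+\e)-F(x)\|_X$ is small, which lets us run the cotype inequality on the ``derivative'' of $F$.

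The key steps, in order, are: (1) \emph{Approximation estimate.} Show that $\E_x\|F(x)-f(x)\|_X^q \lesssim t^q\, \E_{\e,x}\|f(x+\e)-f(x)\|_X^q$ by writing $f(x+z)-f(x)$ as a telescoping sum of $\e$-increments along a path from $x$ to $x+z$ of length $O(t)$ and applying the triangle inequality together with the power-mean inequality, using translation invariance of the uniform measure on $\Z_m^n$ to replace each increment by a generic $\E_{\e,x}$-term. (2) \emph{Reduction to $F$.} Since $\|g(x+\frac m2 e_j)-g(x)\|_X^q \le 2^{q-1}\bigl(\|f-F\|\text{-terms}\bigr)+2^{q-1}\|F(x+\frac m2 e_j)-F(x)\|_X^q$, it suffices to prove \eqref{eq:in thm} with $f$ replaced by $F$ on the left, provided the $f$--$F$ error from step (1) is absorbed into the right-hand side; this is where the condition $m\gtrsim n^{1+1/q}$ enters, since we need $t^q \lesssim m^q/n$, i.e. $t\lesssim m/n^{1/q}$, compatibly with $t\sim n$. (3) \emph{Core inequality for the smoothed function.} For the smoothed $F$, expand $f$ (hence $F$) in Fourier series over $\Z_m^n$ and estimate the two sides of \eqref{eq:in thm}; here the exact original argument of \cite{MN08} applies, using Rademacher cotype $q$ of $X$ applied to the ``partial derivatives'' of $F$ and the fact that the multiplier defining $F$ damps high frequencies, so that the ratio of the two sides is controlled once $m$ exceeds the relevant threshold in $n$. (4) Combine (1)--(3) and optimize the free parameters.

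I expect the main obstacle to be step (3) together with the bookkeeping in step (2): one must choose the smoothing kernel so that it simultaneously (a) has a short enough ``support diameter'' $O(t)$ with $t\sim n$ to make the approximation error in step (1) of the right order, and (b) damps frequencies strongly enough that the core Fourier estimate only costs a single factor of $n$ (as opposed to the $n^2$ in \cite{MN08}). Balancing these two requirements is the crux — too much smoothing helps (b) but hurts (a), and vice versa — and getting the exponent down to $n^{1+1/q}$ means the kernel and the value of $t$ must be essentially optimal for both. A secondary technical point is that all path-telescoping arguments must be carried out on $\Z_m^n$ with its $\ell_\infty$-generator structure, so the combinatorics of choosing canonical paths (and averaging over them to symmetrize) needs care; but these are routine once the right kernel is in place.
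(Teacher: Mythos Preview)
Your outline captures the high-level architecture correctly --- smooth, control the approximation error, apply cotype to the discrete derivatives of the smoothed function --- but step~(3) hides the entire difficulty, and as written it would not give the improved exponent.

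First, invoking ``the exact original argument of \cite{MN08}'' at step~(3) is precisely what you cannot do: that argument yields $m\gtrsim n^{2+1/q}$, not $n^{1+1/q}$. The content of the present theorem is a \emph{new} smoothing inequality (the paper's Lemma~\ref{importantlemma}) showing that for a carefully chosen family of kernels one has $S\lesssim_q 1$ with $A\lesssim n$, whereas the kernel analysis in \cite{MN08} only achieved $A\lesssim n^2$. So you must supply a new proof of the smoothing bound, not appeal to the old one.

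Second, the Fourier-analytic route you sketch (``the multiplier defining $F$ damps high frequencies'') is exactly the mechanism that works when $X$ has nontrivial Rademacher type, via Pisier's $K$-convexity theorem and the Rademacher projection; but for a general cotype-$q$ space with possibly trivial type there is no usable bound on such multipliers, and this is discussed explicitly in Section~\ref{sec:lower} of the paper. The actual proof avoids Fourier analysis entirely: it uses $n$ \emph{different} convolution kernels $\nu_1,\dots,\nu_n$ (not a single box kernel) supported on sets $S(j,k)$ with a specific parity structure, and establishes the smoothing inequality by rewriting the Rademacher sum $\sum_j\e_j[\ep f(x+e_j)-\ep f(x-e_j)]$ via combinatorial identities that reduce to a recursion solved by a bivariate extension of the Bernoulli numbers (Sections~\ref{sec:bernoulli}--\ref{sec:comb}). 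The delicate cancellations encoded in those identities are what bring $S$ down to $O_q(1)$; a generic kernel of diameter $\sim n$ has no reason to achieve this. Finally, even after the smoothing lemma, the right-hand side involves a nonuniform measure $\beta_2$ on the $\ell_\infty$-edges, and an additional averaging over lower-dimensional sub-tori is needed to convert it into the uniform $\{-1,0,1\}^n$ average of~\eqref{eq:in thm}; this step is absent from your proposal as well.
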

In~\eqref{eq:in thm}, and in what follows, $\lesssim_X,\gtrsim_X$ indicate the corresponding inequalities up to constants which may depend only on $q$ and $C_q(X)$. Similarly, we will use the notation $\lesssim_q,\gtrsim_q$ to indicate the corresponding inequalities up to constants which may depend only on $q$.

Though a seemingly modest improvement over the result of~\cite{MN08}, the strengthened metric cotype inequality~\eqref{eq:in thm} does yield some new results in embedding theory, as well as a new proof of a result of Bourgain~\cite{Bou87}; these issues are discussed in Section~\ref{sec:embedding}. More importantly, our proof of Theorem~\ref{maintheorem} is based on a better understanding and sharpening of the underlying principles behind the proof of Theorem~\ref{thm:MN} in~\cite{MN08}. As a result, we isolate here the key approach to the metric characterization of Rademacher cotype in~\cite{MN08}, yielding a simpler and clearer proof of Theorem~\ref{thm:MN}, in addition to the improved bound on $m$. This is explained in detail in Section~\ref{sec:scheme}. While the bound $m\gtrsim n^{1+\frac{1}{q}}$ is far from the conjectured optimal bound $m\ge n^{1/q}$, our second main result is that (a significant generalization of) the scheme for proving Theorem~\ref{thm:MN} and Theorem~\ref{maintheorem} (which is implicit in~\cite{MN08} and formulated explicitly here) cannot yield a bound better than $m\gtrsim n^{\frac12+\frac{1}{q}}$. Our method for proving this lower bound is presented in Section~\ref{sec:tightness}, and might be of independent interest.

We remark in passing that in~\cite{MN08} a one parameter family of variants of the notion of metric cotype is studied, corresponding to raising the distances to powers other than $q$, and modifying the right-hand side of~\eqref{eq:def metric cotype}, \eqref{eq:in thm} accordingly (we refer to~\cite{MN08} for more details). The argument presented here can be modified to yield simplifications and improvements of all the corresponding variants of Theorem~\ref{thm:MN}. While these variants are crucial for certain applications of metric cotype~\cite{MN08,Men09}, we chose to present Theorem~\ref{maintheorem} only for the simplest ``vanilla" version of metric cotype~\eqref{eq:def metric cotype}, for the sake of simplicity of exposition.

\medskip
\noindent{\bf Notation for measures.} Since our argument uses a variety of averaging procedures over several spaces, it will be convenient to depart from the expectation notation that we used thus far. In particular, throughout this paper $\mu$ will denote the uniform probability measure on $\Z_m^n$ ($m,n$ will always be clear from the context), $\sigma$ will denote the uniform probability measure on $\{-1,0,1\}^n$, and $\tau$ will denote the uniform probability measure on $\{-1,1\}^n$.

\subsection{The smoothing and approximation scheme}\label{sec:scheme}

We start with a description of  an abstraction of the approach of~\cite{MN08} to proving the metric characterization of Rademacher cotype of Theorem~\ref{thm:MN}.

For a Banach space $X$, a function $f:\Z_m^n\to X$ and a probability measure $\nu$ on $\Z_m^n$, we use the standard notation for the convolution $f*\nu:\Z_m^n\to X$:
$$
f*\nu(x)=\int_{\Z_m^n} f(x-y)d\nu(y).
$$

Assume that we are given $n$ probability measures $\nu_1,\ldots,\nu_n$ on $\Z_m^n$, and two additional probability measures $\beta_1,\beta_2$ on the pairs in $\Z_m^n\times \Z_m^n$ of $\ell_\infty$ distance $1$, i.e., on the set
\begin{equation}\label{eq:def infty edges}
E_\infty(\Z_m^n)\stackrel{\mathrm{def}}{=} \Big\{(x,y)\in \Z_m^n\times \Z_m^n:\ x-y\in \{-1,0,1\}^n\Big\}.
\end{equation}
For $A,S,q\ge 1$, we shall say that the measures $\nu_1,\ldots,\nu_n,\beta_1,\beta_2$ are a {\em $(q,A,S)$-smoothing and approximation scheme} on $\Z_m^n$ if for every Banach space $(X,\|\cdot\|_X)$ and every $f:\Z_m^n\to X$ we have the following two inequalities:

\medskip

\noindent{\em (A) Approximation property:}
\begin{equation}\label{eq:A}
\frac{1}{n}\sum_{j=1}^n\int_{\Z_m^n}\left\|f*\nu_j(x)-f(x)\right\|_X^qd\mu(x)\le A^q \int_{E_\infty(\Z_m^n)} \|f(x)-f(y)\|_X^qd\beta_1(x,y).
\end{equation}

\noindent{\em (S) Smoothing property:}
\begin{multline}\label{eq:S}
\int_{\Z_m^n}\int_{\{-1,1\}^n}\left\|\sum_{j=1}^n \e_j\Big(f*\nu_j(x+e_j)-f*\nu_j(x-e_j)\Big)\right\|_X^qd\tau(\e)d\mu(x)\\\le S^q \int_{E_\infty(\Z_m^n)} \|f(x)-f(y)\|_X^qd\beta_2(x,y).
\end{multline}
Often, when the underlying space $\Z_m^n$ is obvious from the context, we will not mention it explicitly, and simply call $\nu_1,\ldots,\nu_n,\beta_1,\beta_2$  a {$(q,A,S)$-smoothing and approximation scheme}. In some cases, however, it will be convenient to mention the underlying space $\Z_m^n$ so as to indicate certain restrictions on $m$.

We introduce these properties for the following simple reason. We wish to deduce the metric cotype inequality~\eqref{eq:in thm} from the Rademacher cotype inequality~\eqref{defradcotype}. In essence, the Rademacher cotype condition~\eqref{defradcotype} is the same as the metric cotype inequality~\eqref{eq:in thm} when restricted to {\em linear} mappings. This statement is not quite accurate, but it suffices for the purpose of understanding the intuition behind the ensuing argument; we refer to Section 5.1 in~\cite{MN08} for the precise argument. In any case, it stands to reason that in order to prove~\eqref{eq:in thm} from~\eqref{defradcotype}, we should first smooth out $f$, so that it will be locally well approximated (on average) by a linear function. As we shall see momentarily, it turns out that the appropriate way to measure the quality of such a smoothing procedure is our smoothing property~\eqref{eq:S}. Of course, while the averaging operators corresponding to convolution with the measures $\nu_1,\ldots,\nu_n$ yield a better behaved function, we still need the resulting averaged function to be close enough to the original function $f$, so as to deduce a meaningful inequality such as~\eqref{eq:in thm} for $f$ itself. Our approximation property~\eqref{eq:A} is what's needed for carrying out such an approach.

The above general scheme is implicit in~\cite{MN08}. Once we have isolated the crucial approximation and smoothing properties, it is simple to see how they relate to metric cotype. For this purpose, assume that the Banach space $X$ has Rademacher cotype $q$, and for each $x\in \Z_m^n$ apply the Rademacher cotype $q$ inequality to the vectors  $\{f*\nu_j(x+e_j)-f*\nu_j(x-e_j)\}_{j=1}^n$ (where the averaging in~\eqref{defradcotype} is with respect to $\e\in \{-1,1\}^n$, rather than $\e\in\{-1,0,1\}^n$; it is an easy standard fact that these two variants of Rademacher cotype $q$ coincide):

\begin{multline}\label{eq:use cotype intro}
\int_{\{-1,1\}^n}\left\|\sum_{j=1}^n \e_j\Big(f*\nu_j(x+e_j)-f*\nu_j(x-e_j)\Big)\right\|_X^qd\tau(\e)\\\gtrsim_X
\sum_{j=1}^n \left\|f*\nu_j(x+e_j)-f*\nu_j(x-e_j)\right\|_X^q.
\end{multline}
The triangle inequality, combined with the convexity of the function $t\mapsto t^q$, implies that for every $x\in \Z_m^n$ and $j\in \{1,\ldots,n\}$ we have
\begin{multline}\label{eq:triangle intro}
\left\|f\left(x+\frac{m}{2}e_j\right)-f(x)\right\|_X^q\le 3^{q-1}\left\|f*\nu_j\left(x+\frac{m}{2}e_j\right)-f*\nu_j(x)\right\|_X^q \\+3^{q-1}\left\|f*\nu_j\left(x+\frac{m}{2}e_j\right)-f\left(x+\frac{m}{2}e_j\right)\right\|_X^q +3^{q-1}\|f*\nu_j(x)-f(x)\|_X^q.
\end{multline}
At the same time (recalling that $m$ is divisible by $4$), a combination of the triangle inequality and H\"older's inequality bounds the first term in the right hand side of~\eqref{eq:triangle intro} as follows:
\begin{multline}\label{eq:telescope intro}
\left\|f*\nu_j\left(x+\frac{m}{2}e_j\right)-f*\nu_j(x)\right\|_X^q \le \left(\sum_{t=1}^{m/4}\left\|f*\nu_j(x+2te_j)-f*\nu_j(x+2(t-1)e_j)\right\|_X\right)^q\\\le \left(\frac{m}{4}\right)^{q-1}\sum_{t=1}^{m/4}\left\|f*\nu_j(x+2te_j)-f*\nu_j(x+2(t-1)e_j)\right\|_X^q.
\end{multline}
Substituting~\eqref{eq:telescope intro} into~\eqref{eq:triangle intro}, summing up over $j\in \{1,\ldots,n\}$, and integrating with respect to $x\in \Z_m^n$ while using the translation invariance of the measure $\mu$, we deduce the inequality
\begin{multline}\label{eq:integrated intro}
\sum_{j=1}^n\int_{\Z_m^n} \left\|f\left(x+\frac{m}{2}e_j\right)-f(x)\right\|_X^qd\mu(x)\lesssim 3^q \sum_{j=1}^n\int_{\Z_m^n}\left\|f*\nu_j(x)-f(x)\right\|_X^qd\mu(x)\\+ m^q \sum_{j=1}^n \int_{\Z_m^n} \left\|f*\nu_j(x+e_j)-f*\nu_j(x-e_j)\right\|_X^qd\mu(x).
\end{multline}
We can now bound the first term in the right hand side of~\eqref{eq:integrated intro} using the approximation property~\eqref{eq:A}, and the second term in the right hand side of~\eqref{eq:integrated intro} using~\eqref{eq:use cotype intro} and the smoothing property~\eqref{eq:S}. The inequality thus obtained is
\begin{multline}\label{eq:combine}
\sum_{j=1}^n\int_{\Z_m^n} \left\|f\left(x+\frac{m}{2}e_j\right)-f(x)\right\|_X^qd\mu(x)\\\lesssim_X \left(nA^q+m^qS^q\right)\int_{E_\infty(\Z_m^n)} \|f(x)-f(y)\|_X^qd\beta_3(x,y),
\end{multline}
where $\beta_3=(\beta_1+\beta_2)/2$. Note in passing that when $m\lesssim A$, an inequality such as~\eqref{eq:combine}, with perhaps a different measure $\beta_3$ on $E_\infty(\Z_m^n$), is a consequence of the triangle inequality, and therefore holds trivially on any Banach space $X$. Thus, for our purposes, we may assume throughout that a $(q,A,S)$-smoothing and approximation scheme on $\Z_m^n$ satisfies $m\gtrsim A$.

Assuming that
\begin{equation}\label{eq:m}
m\gtrsim \frac{A}{S}\cdot n^{1/q},
\end{equation}
inequality~\eqref{eq:combine} becomes
\begin{equation}\label{eq:with S}
\sum_{j=1}^n\int_{\Z_m^n} \left\|f\left(x+\frac{m}{2}e_j\right)-f(x)\right\|_X^qd\mu(x)\lesssim_X S^qm^q \int_{E_\infty(\Z_m^n)} \|f(x)-f(y)\|_X^qd\beta_3(x,y).
\end{equation}

If we could come up with a smoothing and approximation scheme for which $S\lesssim 1$, and $m$ satisfied~\eqref{eq:m}, then inequality~\eqref{eq:with S} would not quite be the desired metric cotype inequality~\eqref{eq:in thm}, but it would be rather close to it. The difference is that the probability measure $\beta_3$ is not uniformly distributed on all $\ell_\infty$ edges $E_\infty(\Z_m^n)$, as required in~\eqref{eq:in thm}. Nevertheless, for many measures $\beta_3$, elementary triangle inequality and symmetry arguments can be used to ``massage" inequality~\eqref{eq:with S} into the desired inequality~\eqref{eq:in thm}. This last point is a technical issue, but it is not the heart of our argument: we wish to design a smoothing and approximation scheme satisfying $S\lesssim 1$ with $A$ as small as possible. In~\cite{MN08} such a scheme was designed with $A\lesssim n^2$. Here we carefully optimize the approach of~\cite{MN08} to yield a smoothing and approximation scheme with $A\lesssim n$, in which case~\eqref{eq:m} becomes the desired bound $m\gtrsim n^{1+\frac{1}{q}}$.

The bounds that we need in order to establish this improved estimate on $m$ are based on the analysis of some quite delicate cancelations; indeed the bounds that we obtain are sharp for our smoothing and approximation scheme, as discussed in Section~\ref{sec:lower}. In proving such sharp bounds, a certain bivariate extension of the Bernoulli numbers arises naturally; these numbers, together with some basic asymptotic estimates for them, are presented in Section~\ref{sec:bernoulli}. The cancelations in the Rademacher sums corresponding to our convolution kernels are analyzed via certain combinatorial identities in Section~\ref{sec:comb}.

\subsection{A lower bound on smoothing and approximation with general  kernels}\label{sec:lower}

One might wonder whether our failure to prove the bound $m\gtrsim n^{1/q}$ without the non-trivial Rademacher type assumption is due to the fact we chose the wrong smoothing and approximation scheme. This is not the case. In Section~\ref{sec:tightness} we show that any approach based on smoothing and approximation is doomed to yield a sub-optimal dependence of $m$ on $n$ (assuming that the conjectured $n^{1/q}$ bound is indeed true). Specifically, we show that for {\em any} $(q,A,S)$-smoothing and approximation scheme on $\Z_m^n$, with $m\gtrsim A$, we must have $AS\gtrsim_q \sqrt{n}$. Thus the bound $S\lesssim 1$ forces the bound $A\gtrsim_q \sqrt n$, and correspondingly~\eqref{eq:m} becomes $m\gtrsim_q n^{\frac12 +\frac{1}{q}}$. Additionally, we show in Section~\ref{sec:tightness} that for the specific smoothing and approximation scheme used here, the bound $m\gtrsim n^{1+\frac{1}{q}}$ is sharp.

It remains open what is the best bound on $m$ that is achievable via a smoothing and approximation scheme. While this question is interesting from an analytic perspective, our current lower bound shows that we need to use more than averaging with respect to positive measures in order to prove the desired bound $m\gtrsim n^{1/q}$.

Note that the lower bound  $m\gtrsim_q n^{\frac12 +\frac{1}{q}}$ for smoothing and approximation schemes rules out the applicability of this method to some of the most striking potential applications of metric cotype to embedding theory in the coarse, uniform, or quasisymmetric categories, as explained in Section~\ref{sec:embedding}; these applications rely crucially on the use of a metric cotype inequality with $m\asymp n^{1/q}$.

The cancelation that was exploited in~\cite{MN08} in order to prove the sharp bound on $m$ in the presence of non-trivial Rademacher type was also related to smoothing properties of convolution kernels, but with respect to signed measures: the smoothed Rademacher sums in the left hand side of~\eqref{eq:S} are  controlled in~\cite{MN08} via the Rademacher projection, and the corresponding smoothing inequality (for signed measures) is proved via an appeal to Pisier's $K$-convexity theorem~\cite{Pis82}. It would be of great interest to understand combinatorially/geometrically the cancelations that underly the estimate $m\ge n^{1/q}$ from~\cite{MN08}, though there seems to be a lack of methods to handle smoothing properties of signed convolution kernels in spaces with trivial Rademacher type and finite Rademacher cotype.

\subsection{The relation to nonembeddability results and some open problems}\label{sec:embedding} We  recall some standard terminology. Let $(X,d_X)$ and $(Y,d_Y)$ be metric spaces. $X$ is said to embed with distortion $D$ into $Y$ if there exists a mapping $f:X\to Y$ and (scaling factor) $\lambda>0$, such that for all $x,y\in X$ we have $\lambda d_X(x,y)\le d_Y(f(x),f(y))\le D\lambda d_X(x,y)$. $X$ is said to embed uniformly into $Y$ if there exists an into homeomorphism $f:X\to Y$ such that both $f$ and $f^{-1}$ are uniformly continuous. $X$ is said to embed coarsely into $Y$ if there exists a mapping $f:X\to Y$ and two non-decreasing functions $\alpha,\beta:[0,\infty)\to [0,\infty)$ such that $\lim_{t\to \infty}\alpha(t)=\infty$, and for all $x,y\in X$ we have $\alpha(d_X(x,y))\le d_Y(f(x),f(y))\le \beta(d_X(x,y))$. $X$ is said to admit a quasisymmetric embedding into $Y$ if there exists a mapping $f:X\to Y$ and an increasing (modulus) $\eta:(0,\infty)\to (0,\infty)$ such that for all distinct $x,y,z\in X$ we have $\frac{d_Y(f(x),f(y))}{d_Y(f(x),f(z)}\le \eta\left(\frac{d_X(x,y)}{d_X(x,z)}\right)$.

For a Banach space $X$, let $q_X$ denote the infimum over those $q\ge 2$ such that $X$ has Rademacher (equiv. metric) cotype $q$. It was shown in~\cite{MN08,Nao06} that if $X,Y$ are Banach spaces, $Y$ has Rademacher type $p>1$, and $X$ embeds uniformly, coarsely, or quasisymmetrically into $Y$, then $q_X\le q_Y$. Thus, under the Rademacher type $>1$ assumption on the target space, Rademacher cotype $q$ is an invariant that is stable under embeddings of Banach spaces, provided that the embedding preserves distances in a variety of (seemingly quite weak) senses. The role of the assumption that $Y$ has non-trivial Rademacher type is  via the metric cotype inequality with optimal $m$: the proofs of these results only use that $Y$ satisfies the metric cotype $q$ inequality~\eqref{eq:def metric cotype} for some $m\asymp n^{1/q}$ (under this assumption, $Y$ can be a general metric space and not necessarily a Banach space). This fact motivates our conjecture that for any Banach space $Y$ with Rademacher cotype $q$, the metric cotype inequality~\eqref{eq:in thm} holds for every $m\gtrsim_Y n^{1/q}$. The same assertion for general metric spaces of metric cotype $q$ is too much to hope for; see~\cite{VW10}.

Perhaps the simplest Banach spaces for which we do not know how to prove a sharp metric cotype inequality are $L_1$ and the Schatten-von Neumann trace class $S_1$ (see, e.g., \cite{Woj91}). Both of these spaces have Rademacher cotype $2$ (for $S_1$ see~\cite{T-J74}), yet the currently best known bound on $m$ in the metric cotype inequality~\eqref{eq:in thm} (with $q=2$) for both of these spaces is the bound $m\gtrsim n^{3/2}$ obtained here. The above embedding results in the uniform, coarse or quasisymmetric categories do hold true for embeddings into $L_1$ (i.e., a Banach space $X$ that embeds in one of these senses into $L_1$ satisfies $q_X=2$). This fact is due to an ad-hoc argument, which fails for $S_1$ (see Section 8 in~\cite{MN08} for an explanation). We can thus ask the following natural questions (many of which were already raised in~\cite{MN08}):

\begin{ques}\label{Q:S_1}  Can $L_r$ admit a uniform, coarse, or quasisymmetric embedding into $S_1$ when $r>2$? More ambitiously, can a Banach space $X$ with $q_X>2$ embed in one of these senses into $S_1$? In greatest generality: can a Banach space $X$ embed in one of these senses into a Banach space $Y$ with $q_Y<q_X$?
\end{ques}
 If a Banach space $X$ admits a uniform or coarse embedding into $S_1$, then $X$ must have finite cotype. This fact, which could be viewed as a (non-quantitative) step towards Question~\ref{Q:S_1}, was communicated to us by Nigel Kalton. To prove it,  note that it follows from~\cite[Lem.~3.2]{Ray02} that for any ultrapower $(S_1)^\U$ of $S_1$, the unit ball of $(S_1)^\U$ is uniformly homeomorphic to a subset of Hilbert space. Thus $(S_1)^\U$ has Kalton's property $\mathcal Q$ (see~\cite{Kal07} for a detailed discussion of this property). If the unit ball of $X$ is uniformly homeomorphic to a subset of $S_1$ (resp. $X$ admits a coarse embedding into $S_1$), then the unit ball in any ultrapower of $X$ is uniformly homeomorphic to a subset of $(S_1)^\U$ (resp. any ultrapower of $X$ admits a coarse embedding into $(S_1)^\U$). By the proof of~\cite[Thm. 4.2]{Kal07}, it follows that any ultrapower of $X$ has property $\mathcal Q$, and hence it cannot contain $c_0$. Thus $X$ cannot have infinite cotype by the Maurey-Pisier theorem~\cite{MP76} and standard Banach space ultrapower theory (see~\cite[Thm.~8.12]{DJT95}).


\begin{ques}\label{Q:S1 type}
Does $S_1$ admit a  uniform, coarse, or quasisymmetric embedding into a Banach space $Y$ with Rademacher type $p>1$? More ambitiously, does $S_1$ embed in one of these senses into Banach space $Y$ with Rademacher type $p>1$ and $q_Y=2$? In greatest generality: does every Banach space $X$ embed in one of these senses into a Banach space $Y$ with Rademacher type $p>1$? Perhaps we can even ensure in addition that $q_Y=q_X$?
\end{ques}

Question~\ref{Q:S1 type} relates to Question~\ref{Q:S_1} since embeddings into spaces with type $>1$ would allow us to use the nonembeddability results of~\cite{MN08}.

While the improved bound on $m$ in Theorem~\ref{maintheorem} does not solve any of these fundamental questions, it does yield new restrictions on the possible moduli of embeddings in the uniform, coarse, or quasisymmetric categories. Instead of stating our nonembedding corollaries in greatest generality, let us illustrate our (modest) improved nonembeddability results for snowflake embeddings of $L_4$ into $S_1$ (this is just an illustrative example; the method of~\cite{MN08} yields similar results for embeddings of any Banach space $X$ with $q_X>2$ into $S_1$, and $S_1$ itself can be replaced by general Banach spaces of finite cotype). Take $\theta\in (0,1)$ and assume the metric space $(L_4,\|x-y\|_4^\theta)$ admits a bi-Lipschitz embedding into $S_1$. Our strong conjectures imply that this cannot happen, but at present the best we can do is give bounds on $\theta$. An application of Theorem~\ref{maintheorem} shows that $\theta\le 4/5$, i.e., we have a definite quantitative estimate asserting that a uniform embedding of $L_4$ into $S_1$ must be far from bi-Lipschitz. The previous bound from~\cite{MN08} for $S_1$ was $m=n^{5/2}$, yielding $\theta\le 8/9$. Our lower bound shows that by using a smoothing and approximation scheme we cannot hope to get a bound of $\theta<2/3$.

\medskip

Turning to bi-Lipschitz embeddings, consider the grid $\{0,1,\ldots,m\}^n\subseteq \mathbb R^n$, equipped with the $\ell_\infty^n$ metric. We denote this metric space by $[m]_\infty^n$. Bourgain~\cite{Bou87} proved that if $Y$ is a Banach space with Rademacher cotype $q$, then any embedding of $\left[n^{1+\frac{1}{q}}\right]_\infty^n$ into $Y$ incurs distortion $\gtrsim_Y n^{1/q}$. The same result follows from Theorem~\ref{maintheorem}, while the previous estimate on $m$ from~\cite{MN08} only yields the weaker distortion lower bound of $\gtrsim_Y n^{\frac{q+1}{q(2q+1)}}$ for embeddings of $\left[n^{1+\frac{1}{q}}\right]_\infty^n$ into $Y$. The sharp bound on $m$ from~\cite{MN08} when $Y$ has Rademacher type $>1$ implies that in this case, any embedding of $\left[n^{1/q}\right]_\infty^n$ into $Y$ incurs distortion $\gtrsim n^{1/q}$ (where the implied constant is now allowed to depend also on the Rademacher type parameters of $Y$). Our main conjecture implies the same improvement of Bourgain's result without the assumption that $Y$ has non-trivial Rademacher type.

Bourgain's theorem~\cite{Bou87} is part of his more general investigation of embeddings of $\e$-nets in unit balls of finite dimensional normed spaces. Bourgain's approach in~\cite{Bou87} is based on ideas similar  to ours, that are carried out in the continuous domain. Specifically, given a mapping $f:[m]_\infty^n\to Y$, he finds a mapping $g:\mathbb R^n\to Y$ which is $L$-Lipschitz and close in an appropriate sense (depending on $L,m,n$) to $f$ on points of the grid $[m]_\infty^n$. Once this is achieved, it is possible to differentiate $g$ to obtain the desired distortion lower bound. Bourgain's approximate Lipschitz extension theorem (an alternative proof of which was found in~\cite{Beg99}) is a continuous version of a smoothing and approximation scheme; it seems plausible that our method in Section~\ref{sec:tightness} for proving impossibility results for such schemes can be used to prove similar restrictions on Bourgain's approach to approximate Lipschitz extension. When $Y$ has non-trivial Rademacher type, the improvement in~\cite{MN08} over Bourgain's nonembeddability result for grids  is thus based on a more delicate cancelation than was used in~\cite{Bou87,Beg99}.

\begin{ques}\label{Q:JeanB}
Is it true that for any Banach space $Y$  of Rademacher cotype $q$, any embedding of $\left[n^{1/q}\right]_\infty^n$ into $Y$ incurs distortion $\gtrsim_Y n^{1/q}$ (if true, this is a sharp bound). Specializing to the Schatten-von Neumann trace class $S_1$, we do not even know whether the distortion of $\left[\sqrt n\right]_\infty^n$ in $S_1$ is $\gtrsim \sqrt n$. Theorem~\ref{maintheorem} implies a distortion lower bound of $\gtrsim n^{1/6}$, while the bound on $m$ from~\cite{MN08} only yields a distortion lower bound of $\gtrsim n^{1/10}$. Our results in Section~\ref{sec:tightness} show that one cannot get a distortion lower bound asymptotically better than  $n^{1/4}$ by using smoothing and approximation schemes.
\end{ques}

We did not discuss here metric characterizations of Rademacher type. We refer to~\cite{MN07} for more information on this topic. It turns out that our approach to Theorem~\ref{maintheorem} yields improved bounds in~\cite{MN07} as well; see~\cite{GN10}.

\bigskip

\noindent{\bf Acknowledgements.} O.~G. was partially supported by NSF grant CCF-0635078. M.~M. was partially supported by ISF grant no. 221/07, BSF grant no. 2006009, and a gift from Cisco research
center. A.~N. was supported in part by NSF grants CCF-0635078 and CCF-0832795, BSF
grant 2006009, and the Packard Foundation.

\section{Proof of Theorem~\ref{maintheorem} }
\label{sec:proof-thm}

For $n\in \N$ denote $[n]=\{1,\ldots,n\}$. When $B\subseteq [n]$, and $x\in\bbZ_m^{B}$, we will sometimes slightly abuse notation
by treating $x$ as an element of $\bbZ_m^n$, with the understanding that for  $i\in [n]\setminus B$ we have $x_i=0$.
For $y\in \bbZ_m^n$, we denote by $y_B$ the restriction of $y$ to the coordinates in $B$.

As in~\cite{MN08}, for $j\in [n]$ and an odd integer $k<m/2$, we define $S(j,k)\subseteq \Z_m^n$ by
\begin{equation}\label{eq:def S(j,k)}
 S(j,k)\stackrel{\mathrm{def}}{=} \Big\{y\in [-k,k]^n\subseteq \bbZ_{m}^{n}\colon\ y_{j} \text{ is even}\ \wedge\
 \forall \ell \in [n]\setminus{\{j\}}\  y_{\ell}  \text{ is odd}\Big\}.
\end{equation}
The parameter $k$ will be fixed throughout the ensuing argument, and will be specified later. For every $j\in [n]$ let $\nu_j$ be the uniform probability measure on $S(j,k)$. Following the notation of~\cite{MN08}, for a Banach space $(X,\|\cdot\|_X)$ and $f:\Z_m^n\to X$, we write $f*\nu_j=\ep f$, that is,
\begin{equation}\label{eq:def:Upsilon}
 \ep f(x)\stackrel{\mathrm{def}}{=}\frac{1}{\mu(S(j,k))}\int_{S(j,k)}f(x+y)d\mu(y).
\end{equation}

Recall that $E_\infty(\Z_m^n)$, defined in~\eqref{eq:def infty edges}, is the set of all $\ell_\infty$ edges of $\Z_m^n$. Similarly, we denote the $\ell_1$ edges of $\Z_m^n$ by $E_1(\Z_m^n)$, i.e.,
\begin{equation}\label{eq:def 1 edges}
E_1(\Z_m^n)\stackrel{\mathrm{def}}{=} \Big\{(x,y)\in \Z_m^n\times \Z_m^n:\ x-y\in \{\pm e_1,\ldots,\pm e_n\}\Big\}.
\end{equation}
Clearly $E_1(\Z_m^n)\subseteq E_\infty(\Z_m^n)$.

Let $\beta_1^\circ$ denote the uniform probability distribution on the pairs $(x,y)\in E_\infty(\Z_m^n)$ with $x-y\in \{-1,1\}^n$, and let $\beta_1^{\circ\circ}$ denote the uniform probability distribution on $E_1(\Z_m^n)$. We shall consider the probability measure on $E_\infty(\Z_m^n)$ given by $\beta_1=(\beta_1^\circ+\beta_1^{\circ\circ})/2$.

Lemma 5.1 in~\cite{MN08} implies that for all $q\ge 1$ and $f:\Z_m^n\to X$ we have:
\begin{equation}\label{eq:A for us}
\frac{1}{n}\sum_{j=1}^n\int_{\Z_m^n}\left\|\ep f-f\right\|_X^qd\mu\lesssim (2k)^q \int_{E_\infty(\Z_m^n)} \|f(x)-f(y)\|_X^qd\beta_1(x,y).
\end{equation}

Inequality~\eqref{eq:A for us} corresponds to the approximation property~\eqref{eq:A}, with $A\lesssim k$. The relevant smoothing inequality is the main new ingredient in our proof of Theorem~\ref{maintheorem}, and it requires a more delicate choice of probability measure $\beta_2$ on $E_\infty(\Z_m^n)$. If $(x,y)\in E_\infty(\Z_m^n)$ then $x-y\in \{-1,0,1\}^n$. Let $S=\{i\in [n]:\ x_i=y_i\}$, and define
\begin{equation}\label{eq:def beta2}
\beta_2(x,y)\stackrel{\mathrm{def}}{=} \frac{1}{Z}\cdot \frac{\left(n/k\right)^{q|S|}}{2^{n-|S|}m^n{n\choose |S|}},
\end{equation}
where $Z$ is a normalization factor ensuring that $\beta_2$ is a probability measure, i.e.,
\begin{equation}\label{eq:def Z}
Z=\sum_{\ell=0}^n \left(\frac{n}{k}\right)^{q\ell}\asymp 1,
\end{equation}
provided that, say,
\begin{equation}\label{eq: firs k condition}
k\ge 2n.
\end{equation}
Our final choice of $k$ will satisfy~\eqref{eq: firs k condition}, so we may assume throughout that $Z$ satisfies~\eqref{eq:def Z}.

The key smoothing property of the averaging operators $\{\ep\}_{j=1}^n$ is contained in the following lemma:
\begin{lemma}
\label{importantlemma}
Let $X$ be a Banach space, $q\ge 1$, $n,m\in \N$, where $m>4n$ is divisible by $4$, and $f:\Z_m^n\to X$. Suppose that $k$ is an odd integer satisfying $2n\le k<\frac{m}{2}$. Then,
\begin{multline}\label{eq:our main S}
\int_{\bbZ_{m}^{n}}\int_{\{-1,1\}^n}\left\|\sum_{j=1}^{n}\e_{j}\left[{\ep f(x+e_{j})-\ep f(x-e_{j})}\right]\right\|_{X}^{q}d\tau(\e)d\mu(x) \\
 \le S^q\int_{E_\infty(\Z_m^n)} \|f(x)-f(y)\|_X^qd\beta_2(x,y),
\end{multline}
where $S\lesssim_q 1$.
\end{lemma}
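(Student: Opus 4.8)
The plan is to expand the Rademacher sum over $j$ and exploit the fact that, when we convolve $f$ with $\nu_j$, the difference $\ep f(x+e_j)-\ep f(x-e_j)$ telescopes along the $j$-th coordinate. Concretely, $\nu_j$ is the uniform measure on $S(j,k)\subseteq [-k,k]^n$, whose $j$-th coordinate ranges over even integers in $[-k,k]$ and whose other coordinates range over odd integers in $[-k,k]$. Translating by $\pm e_j$ shifts the $j$-th slab by $\pm 1$, so $\ep f(x+e_j)-\ep f(x-e_j)$ is, up to the normalization $1/\mu(S(j,k))$, a sum of ``boundary'' terms $f(x+y)-f(x+y')$ where $(x+y,x+y')$ differ by $\pm e_j$ and $y$ lies near the boundary $\{y_j=\pm(k+1)\}$ of the slab; there is partial cancellation in the interior. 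Summing over $j$ and taking the Rademacher average, I would rewrite the left-hand side as $\int_{\Z_m^n}\int_{\{-1,1\}^n}\|\sum_j \e_j g_j(x)\|_X^q\,d\tau\,d\mu$ where each $g_j(x)$ is an explicit signed average of edge-increments of $f$. The goal is to bound this by $S^q$ times the weighted edge-energy $\int_{E_\infty}\|f(x)-f(y)\|_X^q\,d\beta_2$ with $S\lesssim_q 1$.

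The main technical device is a Minkowski-type / triangle inequality in $L^q(\tau)$: write $g_j(x)=\sum_{w} c_{j,w}\big(f(x+w+e_j)-f(x+w-e_j)\big)$ (a convex combination of $\ell_1$-edge increments after telescoping each longer increment into unit steps along coordinate $j$, as in~\eqref{eq:telescope intro}), so that by the triangle inequality in the Banach-space-valued $L^q(\tau)$ norm and convexity of $t\mapsto t^q$, the left-hand side is at most a weighted sum, over translation classes $w$, of terms of the form $\int\int\|\sum_j \e_j (f(x+w+e_j)-f(x+w-e_j))\|_X^q\,d\tau\,d\mu$, each of which — after the substitution $x\mapsto x-w$ and translation-invariance of $\mu$ — is controlled by the corresponding quantity with $w=0$, i.e. by the ``one-step'' Rademacher energy $\int\int\|\sum_j\e_j(f(x+e_j)-f(x-e_j))\|_X^q$. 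But this last quantity is exactly of the form appearing with $\beta_2$ restricted to $\ell_1$ edges, so one needs to incorporate the coefficients $c_{j,w}$ honestly: the weight $(n/k)^{q|S|}$ and the factors $2^{-(n-|S|)}m^{-n}\binom{n}{|S|}^{-1}$ in the definition~\eqref{eq:def beta2} of $\beta_2$ must be produced by carefully accounting for (i) the number of $y\in S(j,k)$ contributing a given edge, (ii) the length-at-most-$k$ telescoping which costs a factor $k^{q-1}$ inside each increment and is compensated by $(n/k)^{q}$-type weights, and (iii) the proportion of boundary vs. interior terms. Getting the bookkeeping to close with an absolute (dimension-free) constant $S\lesssim_q 1$, rather than something growing in $n$, is the crux; this is where the ``delicate cancelations'' advertised in the introduction enter, and presumably where the bivariate Bernoulli numbers of Section~\ref{sec:bernoulli} and the combinatorial identities of Section~\ref{sec:comb} are invoked to evaluate the relevant alternating sums exactly.

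More precisely, I expect the heart of the argument to be an exact or near-exact computation of the convolution kernel differences: one should identify $\nu_j(\cdot+e_j)-\nu_j(\cdot-e_j)$ (a signed measure) and show that, when summed against $\e_j$ and averaged, the $\ell_2$-type cancellation among the $n$ coordinates yields a bound proportional to $n$ rather than $n^2$ — this is the quantitative improvement over~\cite{MN08}. The Rademacher averaging is handled not by cotype (which is used later, in~\eqref{eq:use cotype intro}) but purely by the triangle inequality / convexity, so no linear structure of $X$ beyond the norm is needed here; that is consistent with the statement of Lemma~\ref{importantlemma}, which holds for an arbitrary Banach space.

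The step I expect to be the main obstacle is precisely the identification and estimation of the signed kernel $y\mapsto \mathbf{1}_{S(j,k)}(y-e_j)-\mathbf{1}_{S(j,k)}(y+e_j)$ and the resulting alternating combinatorial sums: one must show these sums are $O_q(1)$ after the correct normalization, and the choice of the weights $(n/k)^{q|S|}$ in $\beta_2$ is dictated exactly by what makes this work. I would first do the case $n=1$ (or the ``generic'' coordinate in isolation) to see the mechanism, then handle the product structure over $[n]$ by the Minkowski/convexity reduction above, and only at the end plug in the asymptotics of the bivariate Bernoulli-type numbers to certify $S\lesssim_q 1$.
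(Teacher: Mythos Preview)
Your high-level intuition is right --- the signed kernel $\mathbf{1}_{S(j,k)}(\cdot-e_j)-\mathbf{1}_{S(j,k)}(\cdot+e_j)$ is the object to analyze, the argument uses only the triangle inequality and convexity (no cotype), and the bivariate Bernoulli numbers and the identities of Section~\ref{sec:comb} are indeed where the cancellation is certified. But the concrete mechanism you propose is off in a way that would not close.

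First, there is no telescoping to do inside $g_j$: each term $f(x+e_j+y)-f(x-e_j+y)$ for $y\in S(j,k)$ is already a step of length~$2$ in a single coordinate, so your ``convex combination of $\ell_1$-edge increments after telescoping'' is a misreading (the telescoping in~\eqref{eq:telescope intro} belongs to a different part of the proof of Theorem~\ref{maintheorem}, not to this lemma). Second, your Minkowski reduction to a common translate $w$ cannot be carried out as stated, because the index set $S(j,k)$ depends on $j$; there is no single $w$ for which $\sum_j\e_j\big(f(x+w+e_j)-f(x+w-e_j)\big)$ emerges. Third, and most importantly, the target of the reduction is \emph{not} the ``one-step Rademacher energy'' $\|\sum_j\e_j(f(x+e_j)-f(x-e_j))\|_X^q$ --- that quantity is neither of the form $\int\|f(x)-f(y)\|_X^q\,d\beta_2$ nor otherwise controlled by the right-hand side of~\eqref{eq:our main S}.

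What the paper actually does is the following. Summing over $j$ first, one obtains the exact identity (Lemma~\ref{prop:sumepsilon})
\[
\sum_{j=1}^n\e_j\big(\ep f(x+e_j)-\ep f(x-e_j)\big)=\frac{1}{k(k+1)^{n-1}}\sum_{z\in\mathbb S}\pMmkcor{z\odot\e}\,f(x+z),
\]
where $\mathbb S$ is the all-odd cube $[-k,k]^n$ and the integer coefficient $\pMmkcor{z\odot\e}$ counts signed boundary hits. The crucial step is then to decompose this coefficient as $\sum_{i,j}h_{i,j}\,b_{i,j}(z,\e)$ with $h_{i,j}=B_{i-j,j}$ the bivariate Bernoulli numbers (Lemma~\ref{prop3}); each $b_{i,j}$ is, via~\eqref{eq:b identity'}, an average of \emph{$\ell_\infty$-edge} differences $\Delta_{[n]\setminus S}f(x+\delta k+\e_{[n]\setminus S})-\Delta_{[n]\setminus S}f(x+\delta k-\e_{[n]\setminus S})$ over subsets $S$ with $|S|=i$. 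After H\"older, the factor $k^{-i}\binom{n}{i}$ from the normalization and the subset count combine with the factorial bound~\eqref{eq:asymptotic bernoulli bound} on $|h_{i,j}|$ to produce exactly the weights $(n/k)^{q|S|}/\binom{n}{|S|}$ in $\beta_2$, with an overall constant $\lesssim_q 1$ (Lemma~\ref{propdiagonal}). So the reduction is to $\|f(x+\e_{[n]\setminus S})-f(x)\|_X^q$ terms, not to $\ell_1$ edges or to any residual Rademacher sum; the Bernoulli recursion~\eqref{eq:r-s} is precisely what makes the integer $\pMmkcor{z\odot\e}$ expressible in this basis with controlled coefficients.
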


We shall postpone the proof of Lemma~\ref{importantlemma} to Section~\ref{sec:proof-importantlemma}, and proceed now to deduce Theorem~\ref{maintheorem} assuming its validity. Before doing so, we recall for future use the following simple lemma from~\cite{MN08}:
\begin{lemma}[Lemma~2.6 from~\cite{MN08}]
\label{lemma2}
For every $q \geq 1$ and for every $f:\bbZ_{m}^{n}\rightarrow X$,
\begin{equation}\label{eq:quote ell1}
\frac{1}{n}\sum_{j=1}^{n}\int_{\bbZ_{m}^{n}}\|f(x+e_{j})-f(x)\|_{X}^{q}d\mu(x) \lesssim    2^{q}\int_{\{-1,0,1\}^{n}}\int_{\bbZ_{m}^{n}}\|f(x+\delta)-f(x)\|_{X}^{q}d\mu(x)d\sigma(\delta).
\end{equation}
\end{lemma}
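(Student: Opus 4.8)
The plan is to fix a single coordinate $j\in[n]$ and prove that
\[
\int_{\Z_m^n}\|f(x+e_j)-f(x)\|_X^q\,d\mu(x)\lesssim 2^q\int_{\{-1,0,1\}^n}\int_{\Z_m^n}\|f(x+\delta)-f(x)\|_X^q\,d\mu(x)\,d\sigma(\delta),
\]
after which averaging over $j\in[n]$ yields~\eqref{eq:quote ell1}. The key observation is that whenever $\delta\in\{-1,0,1\}^n$ satisfies $\delta_j=0$, the point $x+\delta$ is simultaneously an $\ell_\infty$-neighbor of $x$ (their difference is $\delta\in\{-1,0,1\}^n$) and of $x+e_j$ (their difference is $e_j-\delta$, whose $j$-th coordinate equals $1$ and whose remaining coordinates equal $-\delta_\ell\in\{-1,0,1\}$, so $e_j-\delta\in\{-1,0,1\}^n$). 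Inserting $x+\delta$ as an intermediate point and using the triangle inequality together with the convexity of $t\mapsto t^q$,
\[
\|f(x+e_j)-f(x)\|_X^q\le 2^{q-1}\|f(x+e_j)-f(x+\delta)\|_X^q+2^{q-1}\|f(x+\delta)-f(x)\|_X^q
\]
for every such $\delta$.

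Next I would average this inequality over $x\in\Z_m^n$ with respect to $\mu$ and over $\delta$ uniform in $\{\delta\in\{-1,0,1\}^n:\delta_j=0\}$, and simplify both terms on the right using translation invariance of $\mu$. The second term is left unchanged. For the first, the substitution $y=x+\delta$, $\eta=e_j-\delta$ gives $\int\|f(x+e_j)-f(x+\delta)\|_X^q\,d\mu(x)=\int\|f(y+\eta)-f(y)\|_X^q\,d\mu(y)$, and as $\delta$ ranges uniformly over $\{\delta_j=0\}$ the vector $\eta$ ranges uniformly over $\{\eta\in\{-1,0,1\}^n:\eta_j=1\}$. Writing $a_j$ for the average of $\int\|f(x+\delta)-f(x)\|_X^q\,d\mu(x)$ over $\{\delta:\delta_j=0\}$ and $b_j$ for the analogous average over $\{\delta:\delta_j=1\}$, we obtain $\int\|f(x+e_j)-f(x)\|_X^q\,d\mu(x)\le 2^{q-1}(a_j+b_j)$.

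Finally I would relate $a_j$ and $b_j$ to $P\stackrel{\mathrm{def}}{=}\int_{\{-1,0,1\}^n}\int_{\Z_m^n}\|f(x+\delta)-f(x)\|_X^q\,d\mu\,d\sigma$. Since under $\sigma$ the coordinate $\delta_j$ is uniform on $\{-1,0,1\}$, and since the change of variables $\delta\mapsto-\delta$ combined with the translation $x\mapsto x+\delta$ shows that the conditional average over $\{\delta_j=1\}$ equals the one over $\{\delta_j=-1\}$, we get $P=\tfrac13 a_j+\tfrac23 b_j$, hence $a_j\le 3P$ and $b_j\le\tfrac32 P$. Therefore $\int\|f(x+e_j)-f(x)\|_X^q\,d\mu\le 2^{q-1}(a_j+b_j)\le\tfrac94\cdot 2^qP$, and averaging over $j\in[n]$ completes the proof.

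The argument is routine and there is no genuine obstacle. The only points requiring attention are the choice of the intermediate point $x+\delta$ with $\delta_j=0$ (so that both legs of the triangle inequality are true $\ell_\infty$-edges rather than $\ell_1$-edges), and the bookkeeping with the translation invariance of $\mu$ and the $\delta\mapsto-\delta$ symmetry needed to pass from the conditional averages $a_j,b_j$ back to the unconditional average $P$ over all of $\{-1,0,1\}^n$.
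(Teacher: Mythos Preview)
Your proof is correct. Note, however, that the paper does not actually prove this lemma: it is quoted verbatim from~\cite{MN08} (as Lemma~2.6 there) without argument, so there is no in-paper proof to compare against. Your approach---inserting the intermediate point $x+\delta$ with $\delta_j=0$ so that both legs are $\ell_\infty$-edges, then averaging and using the $\delta\mapsto-\delta$ symmetry to relate the conditional averages $a_j,b_j$ back to $P$---is the natural argument and yields the inequality with the explicit constant~$\tfrac{9}{4}\cdot 2^q$, well within the stated $\lesssim 2^q$.
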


\begin{proof}[Proof of Theorem~\ref{maintheorem}] The argument in the introduction leading to~\eqref{eq:with S}, when specialized to our smoothing and approximation scheme using~\eqref{eq:A for us} and~\eqref{eq:our main S}, shows that if $k \ge 2n$ and $m\ge 2kn^{1/q}$, then
\begin{equation}\label{eq:with S-proof}
\sum_{j=1}^n\int_{\Z_m^n} \left\|f\left(x+\frac{m}{2}e_j\right)-f(x)\right\|_X^qd\mu(x)\lesssim_X m^q \int_{E_\infty(\Z_m^n)} \|f(x)-f(y)\|_X^qd\beta_3(x,y),
\end{equation}
where
$$
\beta_3=\frac{\beta_1+\beta_2}{2}\le \beta_1^\circ+\beta_1^{\circ\circ}+\beta_2.
$$
Note that $\beta_1^\circ\lesssim \beta_2$ due to the contribution of $S=\emptyset$ in~\eqref{eq:def beta2}. Thus, \eqref{eq:with S-proof} implies the following bound:
\begin{multline}\label{eq:for massage}
\sum_{j=1}^n\int_{\Z_m^n} \left\|f\left(x+\frac{m}{2}e_j\right)-f(x)\right\|_X^qd\mu(x)\lesssim_X \frac{m^q}{n}\sum_{j=1}^{n}\int_{\bbZ_{m}^{n}}\|f(x+e_{j})-f(x)\|_{X}^{q}d\mu(x)\\+ m^q \sum_{S\subseteq [n]} \frac{(n/k)^{q|S|}}{{n\choose |S|}}\int_{\{-1,1\}^{[n]\setminus S}}\int_{\Z_m^n}\|f(x+\e)-f(x)\|_X^qd\mu(x)d\tau(\e),
\end{multline}
where the first term in the right hand side of~\eqref{eq:for massage} corresponds to $\beta_1^{\circ \circ}$.

In order to deduce the desired metric cotype inequality~\eqref{eq:in thm} from~\eqref{eq:for massage}, we shall apply~\eqref{eq:for massage} to lower dimensional sub-tori of $\Z_m^n$. Note that we are allowed to do so since our requirements on $k$, namely  $k \ge 2n$ and $m\ge 2kn^{1/q}$, remain valid for smaller $n$.

Fix  $\emptyset \neq B\subseteq [n]$ and $x_{[n]\setminus B} \in \Z_m^{[n]\setminus B}$. We can then consider the mapping $g:\Z_m^B\to X$ given by $g(x_B)=f(x_{[n]\setminus B},x_B)$. Applying~\eqref{eq:for massage} to $g$, and averaging the resulting inequality over $x_{[n]\setminus B}\in  \Z_m^{[n]\setminus B}$, we obtain
\begin{multline}
\label{ineq6}
\sum_{j\in B}\int_{\bbZ_{m}^{n}}\Big\|f\left({x+\frac{m}{2}e_{j}}\right)-f(x)\Big\|_{X}^{q}d\mu(x) \lesssim_X \frac{m^q}{|B|}\sum_{j\in B}\int_{\bbZ_{m}^{n}}\|f(x+e_{j})-f(x)\|_{X}^{q}d\mu(x)\\
+ m^q \sum_{S\subseteq B} \frac{(|B|/k)^{q|S|}}{{|B|\choose |S|}}\int_{\{-1,1\}^{B\setminus S}}\int_{\Z_m^n}\|f(x+\e)-f(x)\|_X^qd\mu(x)d\tau(\e).
\end{multline}

For $B\subseteq [n]$ define the weight $W_{|B|}\stackrel{\mathrm{def}}{=}\frac{2^{|B|-1}}{3^{n-1}}$. Multiplying~\eqref{ineq6} by $W_{|B|}$ and summing over $\emptyset \neq B\subseteq [n]$, we obtain the bound
\begin{multline}
\label{ineq7}
\frac{1}{m^q}\sum_{j=1}^n\int_{\bbZ_{m}^{n}}\Big\|f\left({x+\frac{m}{2}e_{j}}\right)-f(x)\Big\|_{X}^{q}d\mu(x) \lesssim_X \sum_{\substack{B\subseteq [n]\\B\neq \emptyset}}\frac{W_{|B|}}{|B|}\sum_{j\in B}\int_{\bbZ_{m}^{n}}\|f(x+e_{j})-f(x)\|_{X}^{q}d\mu(x)\\
+  \sum_{\substack{B\subseteq [n]\\B\neq \emptyset}}W_{|B|}\sum_{S\subseteq B} \frac{(|B|/k)^{q|S|}}{{|B|\choose |S|}}\int_{\{-1,1\}^{B\setminus S}}\int_{\Z_m^n}\|f(x+\e)-f(x)\|_X^qd\mu(x)d\tau(\e),
\end{multline}
where we used the identity
\begin{equation*}
\sum_{B\subseteq [n]}W_{|B|}\sum_{j\in B}\int_{\bbZ_{m}^{n}}\Big\|f\left({x+\frac{m}{2}e_{j}}\right)-f(x)\Big\|_{X}^{q}d\mu(x)
=\sum_{j=1}^n\int_{\Z_m^n} \left\|f\left(x+\frac{m}{2}e_j\right)-f(x)\right\|_X^qd\mu(x).
\end{equation*}

The first term in the right hand side of~\eqref{ineq7} is easy to bound, using Lemma~\ref{lemma2}, as follows:
\begin{multline}\label{eq:first term almost end}
\sum_{\substack{B\subseteq [n]\\B\neq \emptyset}}\frac{W_{|B|}}{|B|}\sum_{j\in B}\int_{\bbZ_{m}^{n}}\|f(x+e_{j})-f(x)\|_{X}^{q}d\mu(x)
\lesssim \frac{1}{n}\sum_{j=1}^{n}\int_{\bbZ_{m}^{n}}\|f(x+e_{j})-f(x)\|_{X}^{q}d\mu(x)\\
\stackrel{\eqref{eq:quote ell1}}{\lesssim} 2^{q}\int_{\{-1,0,1\}^{n}}\int_{\bbZ_{m}^{n}}\|f(x+\delta)-f(x)\|_{X}^{q}d\mu(x)d\sigma(\delta),
\end{multline}
where in the first inequality of~\eqref{eq:first term almost end} we used the fact that $\sum_{\ell=1}^n\binom{n-1}{\ell-1}\frac{2^{\ell-1}}{3^{n-1}} \lesssim \frac 1 n$.
To bound the second term in the right hand side of~\eqref{ineq7}, note that it equals
\begin{multline}\label{eq:bounding the second}
C\stackrel{\mathrm{def}}{=}\sum_{S\subseteq [n]}\sum_{\substack{S\subseteq B\subseteq [n]\\B\neq \emptyset}}\sum_{\e\in \{-1,1\}^{B\setminus S}} \frac{2^{|B|-1}}{3^{n-1}}\cdot \frac{(|B|/k)^{q|S|}}{2^{|B|-|S|}{|B|\choose |S|}}\int_{\Z_m^n}\|f(x+\e)-f(x)\|_X^qd\mu(x)\\
\lesssim \frac{1}{3^n}\sum_{T\subseteq [n]} \sum_{\e\in \{-1,1\}^{T}} a_T \int_{\Z_m^n}\|f(x+\e)-f(x)\|_X^qd\mu(x),
\end{multline}
where we used the change of variable $T=B\setminus S$, and for every $T\subseteq [n]$ we write,
\begin{equation*}
a_T\stackrel{\mathrm{def}}{=}\sum_{B\supseteq  T} \frac{2^{|B|-|T|}(|B|/k)^{q(|B|-|T|)}}{{|B|\choose |B|-|T|}}=\sum_{\ell=|T|}^n{n-|T|\choose \ell-|T|} \cdot\frac{2^{\ell-|T|}(\ell/k)^{q(\ell-|T|)}}{{\ell \choose \ell- |T|}}.
\end{equation*}
Fix $T\subseteq [n]$. Using the standard bounds $\left(\frac{u}{v}\right)^v\le {u\choose v}\le \left(\frac{eu}{v}\right)^v$, which hold for all integers $0\le v\le u$, we can bound $a_T$ as follows:
\begin{multline*}
\nonumber a_T  \le \sum_{\ell=|T|}^n \left(\frac{e(n-|T|)}{\ell-|T|}\right)^{\ell-|T|}\left(\frac{\ell-|T|}{\ell}\right)^{\ell-|T|}
\left(\frac{\ell}{k}\right)^{q(\ell-|T|)}2^{\ell-|T|}
\\ = \sum_{\ell=|T|}^n \left(\frac{2e(n-|T|)\ell^{q-1}}{k^q}\right)^{\ell-|T|}.
\end{multline*}
Thus, assuming that $k\ge 3n$, and recalling that $q\ge 2$, we get the bound
\begin{align}\label{eq:stirling-1}
a_T \le \sum_{\ell=|T|}^n \left(\frac{2e(n-|T|)n^{q-1}}{(3n)^q}\right)^{\ell-|T|} \le \sum_{\ell=|T|}^n\left( \frac {2e} 9 \right)^{(\ell-|T|)}\lesssim 1.
\end{align}
Combining~\eqref{eq:stirling-1} with~\eqref{eq:bounding the second}, we see that the second term in the right hand side of~\eqref{ineq7} is
\begin{multline*}
C\lesssim \frac{1}{3^n}\sum_{T\subseteq [n]} \sum_{\e\in \{-1,1\}^{T}} \int_{\Z_m^n}\|f(x+\e)-f(x)\|_X^qd\mu(x)\\=\int_{\{-1,0,1\}^{n}}\int_{\bbZ_{m}^{n}}\|f(x+\delta)-f(x)\|_{X}^{q}d\mu(x)d\sigma(\delta).
\end{multline*}
In combination with~\eqref{eq:first term almost end}, inequality~\eqref{ineq7} implies that
\begin{multline*}
\frac{1}{m^q}\sum_{j=1}^n\int_{\bbZ_{m}^{n}}\Big\|f\left({x+\frac{m}{2}e_{j}}\right)-f(x)\Big\|_{X}^{q}d\mu(x) \\ \lesssim_X \int_{\{-1,0,1\}^{n}}\int_{\bbZ_{m}^{n}}\|f(x+\delta)-f(x)\|_{X}^{q}d\mu(x)d\sigma(\delta),
\end{multline*}
which is precisely the desired inequality~\eqref{eq:in thm}. Recall that in the above argument, our requirement on $k$ was $k\ge 3n$, and our requirement on $m$ was $m\ge 2kn^{1/q}$ (and that it is divisible by $4$). This implies the requirement $m\ge 6 n^{1+\frac{1}{q}}$ of Theorem~\ref{maintheorem}. \end{proof}

\section{Proof of Lemma~\ref{importantlemma} }
\label{sec:proof-importantlemma} Lemma~\ref{importantlemma} is the
main new ingredient of the proof of Theorem~\ref{maintheorem}. Its proof is based
on combinatorial identities which relate the ``smoothed out
Rademacher sum"
\begin{equation}\label{eq:rademacher averaged}
\sum_{j=1}^{n}\e_{j}\left[{\ep f(x+e_{j})-\ep f(x-e_{j})}\right]
\end{equation}
 to a certain bivariate extension of the Bernoulli numbers.
We shall therefore first, in Section~\ref{sec:bernoulli}, do some
preparatory work which introduces these numbers and establishes
estimates that we will need in the ensuing argument. We shall then
derive, in Section~\ref{sec:comb}, certain combinatorial identities
that relate~\eqref{eq:rademacher averaged} to the bivariate
Bernoulli numbers. In Section~\ref{sec:together} we shall combine
the results of Section~\ref{sec:bernoulli} and
Section~\ref{sec:comb} to complete the proof of
Lemma~\ref{importantlemma}.

\subsection{Estimates for the bivariate Bernoulli
numbers}\label{sec:bernoulli} There are two commonly used
definitions of the Bernoulli numbers $\{B_r\}_{r=0}^\infty$. For
more information on these two conventions, we refer to
\url{http://en.wikipedia.org/wiki/Bernoulli_number}. Here we shall refer to
 the variant of the Bernoulli numbers that was originally
defined by J. Bernoulli, for which $B_1=\frac12$, and which is
defined via the recursion
\begin{equation}\label{eq:original bernoulli}
r=\sum_{a=0}^{r-1}B_a{r\choose a}.
\end{equation}
Observe that~\eqref{eq:original bernoulli} contains the base case $B_0=1$ when substituting $r=1$.
The recursion~\eqref{eq:original bernoulli} extends naturally
to a bivariate sequence $\{B_{r,s}\}_{r,s=0}^n$, given
by
\begin{equation} \label{eq:r-s}
r-s= \sum_{a=0}^{r-1} B_{a,s}
\binom{r}{a} - \sum_{b=0}^{s-1} B_{r,b} \binom{s}{b}.
\end{equation}

It is well-known (cf.~\cite[Sec.~2.5]{Wilf06}) that the exponential
generating function for $\{B_{r}\}_{r=0}^\infty$ is
$$
F(x)\stackrel{\mathrm{def}}{=}\frac{x e^x }{e^x
-1}=\sum_{r=0}^\infty B_r\frac{x^r}{r!}.
$$
We shall require the following analogous computation of the
bivariate exponential generating function of
$\{B_{r,s}\}_{r,s=0}^n$:
\begin{lemma}\label{lem:generating}
For all $x,y\in \mathbb C$ with $|x|,|y|<\pi$ we have
\begin{equation}\label{eq:generating}
F(x,y)\stackrel{\mathrm{def}}{=} \frac{(x-y)
e^{x+y}}{e^x-e^y}=\sum_{r=0}^\infty\sum_{s=0}^\infty B_{r,s}
\frac{x^r y^s}{r! \cdot s!},
\end{equation}
where the series in~\eqref{eq:generating} is absolutely convergent
on $\{(x,y)\in \mathbb C\times \mathbb C:\ |x|,|y|\le r\}$ for all
$r<\pi$.
\end{lemma}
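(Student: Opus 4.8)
The plan is to compute the exponential generating function $F(x,y)=\sum_{r,s}B_{r,s}\frac{x^ry^s}{r!s!}$ directly from the defining recursion~\eqref{eq:r-s}, mimicking the classical one-variable argument. First I would rewrite~\eqref{eq:r-s} in a form amenable to generating functions. Multiply~\eqref{eq:r-s} by $\frac{x^r y^s}{r!s!}$ and sum over all $r,s\ge 0$; the point is that the convolution sums $\sum_{a=0}^{r-1}B_{a,s}\binom{r}{a}$ turn into products with $e^x-1$ (or $e^y-1$) after accounting for the missing top term $a=r$. Concretely, $\sum_{r,s}\left(\sum_{a=0}^{r}B_{a,s}\binom{r}{a}\right)\frac{x^ry^s}{r!s!}=e^xF(x,y)$, so the deficient sum $\sum_{a=0}^{r-1}$ contributes $e^xF(x,y)-F(x,y)=(e^x-1)F(x,y)$ after summation. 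Similarly the second sum in~\eqref{eq:r-s} contributes $(e^y-1)F(x,y)$. The left-hand side $\sum_{r,s}(r-s)\frac{x^ry^s}{r!s!}$ is elementary: $\sum_{r,s}r\frac{x^ry^s}{r!s!}=xe^{x+y}$ and likewise $\sum_{r,s}s\frac{x^ry^s}{r!s!}=ye^{x+y}$, so the left side equals $(x-y)e^{x+y}$. Equating gives $(x-y)e^{x+y}=\big((e^x-1)-(e^y-1)\big)F(x,y)=(e^x-e^y)F(x,y)$, hence $F(x,y)=\frac{(x-y)e^{x+y}}{e^x-e^y}$, which is~\eqref{eq:generating}.

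There is one subtlety in this formal manipulation: the term $a=r$ that I added and subtracted carries the coefficient $B_{r,s}$, which is exactly what reconstitutes $F(x,y)$, but I should be a little careful about the $r=0$ (resp.\ $s=0$) rows, where the sum $\sum_{a=0}^{r-1}$ is empty. Checking: for $r=0$, \eqref{eq:r-s} reads $-s=-\sum_{b=0}^{s-1}B_{0,b}\binom{s}{b}$, so by the univariate argument $B_{0,s}$ agrees with a signed Bernoulli-type sequence, and the bookkeeping above still goes through because the empty sum corresponds precisely to the coefficient of $x^0$ in $(e^x-1)F(x,y)$, which vanishes. So the algebraic identity between formal power series is valid, and I would state it first as an identity of formal power series.

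To promote this to the claimed analytic statement — absolute convergence on $\{|x|,|y|\le \rho\}$ for every $\rho<\pi$ — I would argue that the right-hand side $\frac{(x-y)e^{x+y}}{e^x-e^y}$ extends to a holomorphic function of two variables in a neighborhood of the polydisc $\{|x|<\pi,|y|<\pi\}$. The only possible singularities come from zeros of $e^x-e^y$, i.e., $x-y\in 2\pi i\mathbb Z$; the zero at $x=y$ is removable because of the factor $x-y$ in the numerator (indeed $\frac{e^x-e^y}{x-y}\to e^y\ne 0$ as $x\to y$, so near the diagonal the quotient is holomorphic), and all other zeros require $|x-y|\ge 2\pi$, which is impossible when $|x|,|y|<\pi$. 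Hence $F(x,y)$ is holomorphic on the open polydisc of radius $\pi$, so its Taylor series about the origin converges absolutely and uniformly on every compact subpolydisc $\{|x|,|y|\le\rho\}$, $\rho<\pi$; and by uniqueness of Taylor coefficients this series must be the one computed formally above, with coefficients $B_{r,s}/(r!s!)$.

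I expect the main obstacle to be purely expository: making the diagonal $x=y$ removable-singularity argument clean enough that the two-variable holomorphy (and hence the radius-of-convergence claim) is fully justified, rather than just asserted. The formal identity itself is routine once the ``add and subtract the top term'' trick is in place; the only real care needed is the behavior near $x=y$, where one should note that $(x,y)\mapsto \frac{e^x-e^y}{x-y}$ (interpreted as $e^x$ when $x=y$) is entire and nonvanishing on the polydisc of radius $\pi$, so its reciprocal is holomorphic there and the product with $(x-y)e^{x+y}$ — no, rather the quotient $(x-y)e^{x+y}\big/(e^x-e^y)=e^{x+y}\big/\frac{e^x-e^y}{x-y}$ — is manifestly holomorphic. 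With that observation the analytic conclusion follows from standard multivariable power-series theory.
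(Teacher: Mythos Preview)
Your proposal is correct and uses essentially the same ingredients as the paper's proof --- the key multiplication by $e^x-e^y$ turning the recursion into the identity $(x-y)e^{x+y}=(e^x-e^y)F(x,y)$, together with the observation that the only singularities of $(x-y)e^{x+y}/(e^x-e^y)$ occur at $x-y\in 2\pi i(\mathbb Z\setminus\{0\})$, which lie outside the polydisc of radius $\pi$. The only difference is the direction of the argument: the paper starts from the analytic function, expands it as $\sum z_{r,s}x^ry^s$, and shows by the same multiplication that the $z_{r,s}$ satisfy recursion~\eqref{eq:r-s} with $z_{0,0}=1$, hence $z_{r,s}=B_{r,s}/(r!s!)$; you instead start from the formal series with coefficients $B_{r,s}/(r!s!)$ and derive the closed form. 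Your final ``uniqueness of Taylor coefficients'' step would be cleanest if phrased as the paper does (the Taylor coefficients of the holomorphic function satisfy the same recursion and initial value, hence coincide with $B_{r,s}/(r!s!)$), or alternatively by noting that $\mathbb C[[x,y]]$ is an integral domain so the formal equation $(e^x-e^y)F=(x-y)e^{x+y}$ has at most one solution.
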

\begin{proof}
The function $F(x,y)$ is analytic  on $D_\pi=\{(x,y)\in \mathbb
C\times \mathbb C:\ |x|,|y|<\pi\}$, since its only non-removable
singularities are when $x-y\in 2\pi i (\Z\setminus\{0\})$. It
follows that we can write $F(x,y)=\sum_{r=0}^\infty\sum_{s=0}^\infty
z_{r,s} x^ry^s$, for some $\{z_{r,s}\}_{r,s=0}^\infty\subseteq
\mathbb C$, where the series converges absolutely on any compact
subset of $D_\pi$ (see, e.g., \cite[Thm.\ 2.2.6]{Hor90}). Note that
\begin{multline}\label{eq:first multiplication}
(e^x-e^y)F(x,y)=\left(\sum_{n=1}^\infty\frac{x^n -y^n}{n!}
\right)\left( \sum_{r=0}^\infty\sum_{s=0}^\infty z_{r,s}
x^ry^s\right)\\
=\sum_{r=0}^\infty\sum_{s=0}^\infty
\left(\sum_{a=0}^{r-1}\frac{z_{a,s}}{(r-a)!}-\sum_{b=0}^{s-1}\frac{z_{r,b}}{(s-b)!}\right)x^ry^s.
\end{multline}
At the same time,
\begin{equation}\label{eq:exponential product}
(e^x-e^y)F(x,y)=(x-y)e^xe^y=(x-y)\sum_{r=0}^\infty\sum_{s=0}^\infty
\frac{x^ry^s}{r!s!}=\sum_{r=0}^\infty\sum_{s=0}^\infty
\left(r-s\right)\frac{x^ry^s}{r!s!}.
\end{equation}
By equating coefficients in~\eqref{eq:first multiplication}
and~\eqref{eq:exponential product}, we see that for all $r,s\in
\N\cup\{0\}$,
$$
r-s=r!s!\left(\sum_{a=0}^{r-1}\frac{z_{a,s}}{(r-a)!}-\sum_{b=0}^{s-1}\frac{z_{r,b}}{(s-b)!}\right)=
\sum_{a=0}^{r-1}{r\choose a}a!s!z_{a,s}-\sum_{b=0}^{s-1}{s\choose
b}r!b!z_{r,b}.
$$
Since $z_{0,0}=1$, the recursive definition~\eqref{eq:r-s} implies
that $z_{r,s}=\frac{B_{r,s}}{r!s!}$, as required.
\end{proof}

An immediate corollary of Lemma~\ref{lem:generating} is that since
$F(x,y)=F(y,x)$,
\begin{equation}\label{eq:symmetry}
\forall r,s\in \N\cup\{0\},\quad B_{r,s}=B_{s,r}.
\end{equation}
Another (crude) corollary of Lemma~\ref{lem:generating} is that
since the power series in~\eqref{eq:generating} converges absolutely
on $\{(x,y)\in \mathbb C\times \mathbb C:\ |x|,|y|\le 2\}$, for all
but at most finitely many $r,s\in \N\cup\{0\}$ we have
$|B_{r,s}/(r!s!)|^{1/(r+s)}\le 1/2$. Thus,
\begin{equation}\label{eq:asymptotic bernoulli bound}
\forall r,s\in \N\cup\{0\},\quad |B_{r,s}|\lesssim
\frac{r!s!}{2^{r+s}}.
\end{equation}

\begin{remark}
Since $B_{2m}=\frac{(-1)^{m-1}2\zeta(2m)(2m)!}{(2\pi)^{2m}}$,
where $\zeta(s)$ is the Riemann zeta function (and $B_{2m+1}=0$ for $m\ge 1$), one has the sharp asymptotics $|B_{2m}|\sim\frac{2 (2m)!}{(2\pi)^{2m}}$ for the classical Bernoulli numbers. We did not investigate the question whether similar sharp asymptotics can be obtained for the bivariate Bernoulli numbers.
\end{remark}

\subsection{Some combinatorial identities}\label{sec:comb}
We start by introducing some notation. For $y\in \Z_m^n$ write:
$$\pkcor{y}\stackrel{\mathrm{def}}{=} \big|\{l: y_l= k \pmod{m}\}\big|,$$
$$\mkcor{y}\stackrel{\mathrm{def}}{=} \big|\{l: y_l= -k \pmod{m}\}\big|,$$
$$\pmkcor{y}\stackrel{\mathrm{def}}{=}\pkcor{y}+\mkcor{y}\quad \mathrm{and}\quad \pMmkcor{y}\stackrel{\mathrm{def}}{=}\pkcor{y}-\mkcor{y}.$$
We also define
\[ \mathbb S \stackrel{\mathrm{def}}{=} \Big\{y \in [-k,k]^n\subseteq \bbZ_{m}^{n}:\   y_{t} \text{ is odd } \forall t\in[n]\Big\}.
\]
For $x \in \bbZ_{m}^{n}$ and $\e\in\{-1,1\}^n$, let $x \odot \e \in\bbZ_m^n$ be the coordinate wise multiplication, i.e., $(x\odot \e)_{j}  =   x_{j}\e_{j}$. Also for $\e,\e' \in \{-1,1\}^n$ let $\langle \e,\e' \rangle = \sum_{j=1}^n\e_j\e'_j$.

We need to define additional auxiliary averaging operators.
\begin{definition}
For $f:\bbZ_m^n \to X$, $k<\frac m 2$ odd, and $B\subseteq{[n]}$, let
\[ \Delta_{B}f(x)\stackrel{\mathrm{def}}{=} \frac{1}{\mu(L_B)}\int_{L_B}f(x+y)d\mu(y), \]
where
\[ L_B\stackrel{\mathrm{def}}{=}\Big\{y\in(-k,k)^n\subseteq \bbZ_m^n:\ \forall i\notin B,\  y_i=0\ \wedge\   \forall i\in[n]\ y_i \text{ is even}\Big\}.\]
\end{definition}

\begin{definition}
Define for $z\in\bbZ_m^n$, $\e\in\{-1,1\}^n$, $i\in [n]$, and $0\le j\le i$,
\begin{align}
\label{defaverage}
b_{i,j}(z,\e) &\stackrel{\mathrm{def}}{=} \sum_{\substack{S\subseteq [n]\\|S|=i}} \sum_{\substack{\delta\in\{-1,1\}^S \\ \langle \delta, \e_S\rangle= i-2j}}
\left(\mathbf 1_{\delta k +\e_{[n]\setminus S}+L_{[n]\setminus S}}(z)
- \mathbf 1_{\delta k -\e_{[n]\setminus S}+L_{[n]\setminus S}}(z)\right), \\
a(z,\e) &\stackrel{\mathrm{def}}{=} \sum_{j=1}^n \e_j \left( \mathbf 1_{e_j+ S(j,k)}(z)
- \mathbf 1_{-e_j+ S(j,k)}(z) \right ),
\label{defupsilon}
\end{align}
where we recall that $S(j,k)$ was defined in~\eqref{eq:def S(j,k)}.
\end{definition}

The next lemma follows immediately  from an inspection of our definitions.

\begin{lemma} \label{prop:hi} The following identities hold true:
\begin{multline}\label{eq:b identity}
\sum_{y\in \bbZ_{m}^{n}}b_{i,j}(y-x,\e)f(y)   \\=  {k^{n-i}}\sum_{\substack{ S \subseteq [n] \\ |S|=i}} \sum_{\substack{\delta \in \{-1,1\}^{S} \\ \langle \delta , \e_{S} \rangle=i-2j}} \Big(\Delta_{[n]\setminus S}f(x+\delta k+\e_{[n]\setminus S}) -\Delta_{[n]\setminus S}f(x+\delta k-\e_{[n]\setminus S})\Big),
\end{multline}
\begin{equation}\label{eq:a identity}
\sum_{y \in \bbZ_m^n}a(y-x,\e)f(y) =
k (k+1)^{n-1} \sum_{j=1}^{n}\e_{j}\big({\ep f(x+e_{j})-\ep f(x-e_{j})}\big). \qedhere
\end{equation}
\end{lemma}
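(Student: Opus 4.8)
The plan is to obtain both identities by the same mechanical procedure: substitute the definitions \eqref{defaverage} and \eqref{defupsilon} into the left-hand sides, interchange the finitely many summations, and recognize each resulting inner sum of translates of $f$ as $|L_B|$ (resp. $|S(j,k)|$) times a value of the averaging operator $\Delta_B$ (resp. $\ep$). The only inputs needed are the explicit cardinalities, together with the elementary observation that, since $\mu$ is the normalized counting measure on $\Z_m^n$, one has $\Delta_B f(z)=\frac{1}{|L_B|}\sum_{w\in L_B}f(z+w)$ for every $B\subseteq[n]$ and $z\in\Z_m^n$, and likewise $\ep f(z)=\frac{1}{|S(j,k)|}\sum_{w\in S(j,k)}f(z+w)$.

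For \eqref{eq:b identity}, plugging \eqref{defaverage} into $\sum_{y}b_{i,j}(y-x,\e)f(y)$ and swapping the order of the (finite) sums gives
$$\sum_{\substack{S\subseteq[n]\\|S|=i}}\ \sum_{\substack{\delta\in\{-1,1\}^S\\\langle\delta,\e_S\rangle=i-2j}}\left(\sum_{y\in\Z_m^n}\mathbf 1_{\delta k+\e_{[n]\setminus S}+L_{[n]\setminus S}}(y-x)f(y)-\sum_{y\in\Z_m^n}\mathbf 1_{\delta k-\e_{[n]\setminus S}+L_{[n]\setminus S}}(y-x)f(y)\right).$$
The first inner sum runs over $y\in x+\delta k+\e_{[n]\setminus S}+L_{[n]\setminus S}$, hence equals $\sum_{w\in L_{[n]\setminus S}}f\big(x+\delta k+\e_{[n]\setminus S}+w\big)=|L_{[n]\setminus S}|\cdot\Delta_{[n]\setminus S}f\big(x+\delta k+\e_{[n]\setminus S}\big)$, and similarly the second sum equals $|L_{[n]\setminus S}|\cdot\Delta_{[n]\setminus S}f(x+\delta k-\e_{[n]\setminus S})$. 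Since $k$ is odd, the even residues in the interval $(-k,k)$ are exactly $0,\pm2,\ldots,\pm(k-1)$, so each of the $|B|$ free coordinates of $L_B$ ranges over $k$ values and $|L_B|=k^{|B|}$; taking $B=[n]\setminus S$ gives $|L_{[n]\setminus S}|=k^{n-i}$, which is precisely the prefactor in \eqref{eq:b identity}.

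For \eqref{eq:a identity}, plugging \eqref{defupsilon} into $\sum_y a(y-x,\e)f(y)$ and swapping sums gives $\sum_{j=1}^n\e_j\big(\sum_{y\in x+e_j+S(j,k)}f(y)-\sum_{y\in x-e_j+S(j,k)}f(y)\big)$, and the $j$-th inner difference equals $|S(j,k)|\big(\ep f(x+e_j)-\ep f(x-e_j)\big)$. Here the $j$-th coordinate of a point of $S(j,k)$ ranges over the $k$ even residues in $[-k,k]$ (namely $0,\pm2,\ldots,\pm(k-1)$) and each of the other $n-1$ coordinates over the $k+1$ odd residues $\pm1,\pm3,\ldots,\pm k$, so $|S(j,k)|=k(k+1)^{n-1}$, which is the prefactor in \eqref{eq:a identity}.

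There is no genuine obstacle here; as the paper says, the identities follow from an inspection of the definitions. The only points requiring a moment's care are (i) computing the two cardinalities $|L_B|=k^{|B|}$ and $|S(j,k)|=k(k+1)^{n-1}$ correctly, which is exactly where the parity of $k$ and the distinction between the open interval $(-k,k)$ appearing in $L_B$ and the closed interval $[-k,k]$ appearing in $S(j,k)$ and $\mathbb S$ come into play, and (ii) keeping track, when re-indexing a translated sum, of which coordinates are frozen to $\pm k$ (those in $S$) and which are being averaged over (those in $[n]\setminus S$, respectively all coordinates in the case of $\ep$).
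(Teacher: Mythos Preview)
Your proposal is correct and is precisely the ``inspection of the definitions'' that the paper invokes in lieu of a written proof: you substitute \eqref{defaverage} and \eqref{defupsilon}, swap the finite sums, and identify the inner sums as $|L_{[n]\setminus S}|=k^{n-i}$ and $|S(j,k)|=k(k+1)^{n-1}$ times the corresponding averaging operators. There is nothing to add; your cardinality computations and the open/closed interval distinction are exactly right.
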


\begin{claim}\label{claim:even zeros}
If there exits $t \in  [n] $ such that $z_{t}$ is even, then $a(z,\e)=b_{i,j}(z,\e)=0$ for all $\e\in \{-1,1\}^n$.
\end{claim}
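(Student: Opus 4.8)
The plan is to verify this directly from the definitions~\eqref{defaverage} and~\eqref{defupsilon}, exploiting the parity constraints built into the sets $S(j,k)$ and $L_B$. The key observation is that every set appearing as an indicator in the definitions of $a(z,\e)$ and $b_{i,j}(z,\e)$ is a translate of a set whose points have a prescribed coordinate parity, and translation by a standard basis vector $\pm e_j$ (or by $\delta k$, with $k$ odd) flips exactly one parity or preserves parities in a controlled way; one then checks that in each such translated set, whenever a coordinate has a value that is forced to be odd, the membership of a point $z$ with that coordinate even is impossible.

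Concretely, first I would examine $a(z,\e)=\sum_{j=1}^n\e_j(\mathbf 1_{e_j+S(j,k)}(z)-\mathbf 1_{-e_j+S(j,k)}(z))$. Recall from~\eqref{eq:def S(j,k)} that $y\in S(j,k)$ means $y_j$ is even and $y_\ell$ is odd for all $\ell\neq j$. So $z\in \pm e_j + S(j,k)$ forces $z_j$ to be odd (since $z_j=\pm 1 + y_j$ with $y_j$ even) and $z_\ell$ to be odd for all $\ell\neq j$; that is, \emph{all} coordinates of $z$ are odd. Hence if some $z_t$ is even, then $z\notin \pm e_j+S(j,k)$ for every $j$, so every indicator in~\eqref{defupsilon} vanishes at $z$, giving $a(z,\e)=0$.

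Next I would do the same for $b_{i,j}(z,\e)$. Here the relevant sets are $\delta k \pm \e_{[n]\setminus S} + L_{[n]\setminus S}$ for $|S|=i$ and $\delta\in\{-1,1\}^S$. By definition $L_{[n]\setminus S}$ consists of points $y\in(-k,k)^n$ with $y_i=0$ for $i\in S$ and $y_i$ even for all $i\in[n]$. Thus a point $w$ in $\delta k + \e_{[n]\setminus S}+L_{[n]\setminus S}$ has, for $\ell\in S$, $w_\ell = \delta_\ell k + (L_{[n]\setminus S})_\ell = \delta_\ell k + 0$, which is odd because $k$ is odd and $\delta_\ell=\pm1$; and for $\ell\notin S$, $w_\ell = 0 \pm 1 + (\text{even})$, which is again odd. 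So once more every coordinate of any point in such a set is odd, and the same conclusion for $b_{i,j}$ follows exactly as before: if $z_t$ is even for some $t$, then $z$ lies in none of these sets and $b_{i,j}(z,\e)=0$. The only thing to be slightly careful about is the reduction modulo $m$: since $m$ is divisible by $4$ (in particular even), parity in $\bbZ_m$ is well-defined, and since $k<m/2$ the coordinates $\delta_\ell k \bmod m$ are genuinely odd representatives. I do not expect any real obstacle here — this is the "inspection of definitions" remark made just before Lemma~\ref{prop:hi} applied once more; the only minor point requiring attention is keeping track of which coordinates get shifted by the basis vectors versus the $\delta k$ term, and confirming that in both~\eqref{defaverage} and~\eqref{defupsilon} \emph{no} coordinate is allowed to stay even.
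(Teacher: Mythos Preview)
Your proposal is correct and is essentially the paper's own proof, just spelled out in more detail: the paper simply asserts that all coordinates of every point in the sets $\pm e_j+S(j,k)$ and $\delta k\pm\e_{[n]\setminus S}+L_{[n]\setminus S}$ are odd, which is exactly the parity check you carry out. Your extra remark that parity in $\bbZ_m$ is well defined because $m$ is even is a nice explicit point that the paper leaves implicit.
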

\begin{proof}
This follows directly from the definitions of the sets $S(j,k)$, and
$L_B$, since all values of the coordinates are odd in all the
points of the sets $$\delta k+\e_{[n]\setminus S} +L_{[n]\setminus S},\
\delta k-\e_{[n]\setminus S} +L_{[n]\setminus S},\  e_j+S(j,k),\  -e_j+S(j,k),$$
for every $S \subseteq [n]$, $\delta\in\{-1,1\}^S$ and $\e\in\{-1,1\}^n$.
\end{proof}

\begin{claim}\label{claim:boundary zeros}
If $z_t$ is odd  and either $z_t\neq \pm k\pmod{m}$ for all $t\in [n]$, or $|z_{t_0}|>k$ for some $t_0\in [n]$, then $a(z,\e)=b_{i,j}(z,\e)=0$.
\end{claim}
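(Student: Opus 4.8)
The plan is to prove Claim~\ref{claim:boundary zeros} by a direct inspection of the supports of the indicator functions appearing in the definitions~\eqref{defaverage} and~\eqref{defupsilon}, exactly in the spirit of Claim~\ref{claim:even zeros}. The point is that each of the terms $a(z,\e)$ and $b_{i,j}(z,\e)$ is a signed sum of indicators of translated copies of the sets $S(j,k)$ or $L_{[n]\setminus S}$, and these sets are, by construction, contained in $[-k,k]^n$; moreover, the way the translations are built, a coordinate of $z$ can equal $\pm k\pmod m$ only at the ``center'' coordinate $j$ (for $a$) or at a coordinate of $S$ (for $b_{i,j}$). So if either hypothesis of the claim holds, then for \emph{every} set in the relevant family, $z$ fails to lie in it, and hence each indicator vanishes.

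\begin{proof}
First suppose $|z_{t_0}|>k\pmod m$ for some $t_0\in[n]$ (meaning the representative in $(-m/2,m/2]$ has absolute value $>k$). Every point of $e_j+S(j,k)$ and of $-e_j+S(j,k)$ lies in $\{-1,0,1\}^n+[-k,k]^n\subseteq[-(k+1),k+1]^n$; more precisely, writing $y\in S(j,k)\subseteq[-k,k]^n$, the $t_0$-coordinate of $\pm e_j+y$ equals $y_{t_0}\in[-k,k]$ if $t_0\neq j$, so it cannot be $z_{t_0}$ unless $t_0=j$, in which case by Claim~\ref{claim:even zeros} we may assume $z_{t_0}$ is odd, hence $z_{t_0}\ne 0$; but then $\pm 1+y_j$ ranges over $\{-k\pm1,\ldots,k\pm1\}$ intersected with the odd numbers, which again lies in $[-(k+1),k+1]$. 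Since $|z_{t_0}|>k$ and $z_{t_0}$ is odd, the only possibility would be $z_{t_0}=\pm(k+1)$, but $k+1$ is even since $k$ is odd; contradiction. Thus $z\notin \pm e_j+S(j,k)$ for all $j$, so $a(z,\e)=0$. For $b_{i,j}$, each summand is the indicator of a set of the form $\delta k+\e_{[n]\setminus S}+L_{[n]\setminus S}$ or $\delta k-\e_{[n]\setminus S}+L_{[n]\setminus S}$; the $t_0$-coordinate of such a point is $\delta_{t_0}k\in\{-k,k\}$ if $t_0\in S$, and $\pm\e_{t_0}+w_{t_0}$ with $w\in L_{[n]\setminus S}\subseteq(-k,k)^n$ if $t_0\notin S$, hence in $(-(k+1),k+1)$, so again an odd value of absolute value $>k$ is impossible. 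Hence $b_{i,j}(z,\e)=0$.

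Now suppose instead that $z_t\ne \pm k\pmod m$ for all $t\in[n]$ (and, by Claim~\ref{claim:even zeros}, that every $z_t$ is odd). Fix $j\in[n]$ and consider a point $\pm e_j+y$ with $y\in S(j,k)$. By the definition~\eqref{eq:def S(j,k)} of $S(j,k)$, for $\ell\ne j$ the coordinate $y_\ell$ is odd and lies in $[-k,k]$; since $y_\ell$ is odd it can equal $z_\ell$, and nothing yet forces a contradiction \emph{from that coordinate alone}. The contradiction comes from the center coordinate: $y_j$ is even with $|y_j|\le k$, so $(\pm e_j+y)_j=\pm1+y_j$ is odd with $|\pm1+y_j|\le k+1$, and being odd it is in fact $\le k$ in absolute value unless it equals $\pm(k+1)$, which is even — impossible. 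So $(\pm e_j+y)_j$ is an odd integer in $[-k,k]$, hence in $\{-k+2,\ldots,k-2\}\cup\{\pm k\}$; but we must have $(\pm e_j+y)_j=z_j$, and by hypothesis $z_j\ne\pm k$, so $(\pm e_j+y)_j\in[-(k-2),k-2]$. This does not immediately vanish — the right way to see vanishing is to use that the hypothesis on $z$ holds at \emph{all} coordinates simultaneously and note that, summing the two indicators $\mathbf 1_{e_j+S(j,k)}(z)-\mathbf 1_{-e_j+S(j,k)}(z)$, whenever $z\in e_j+S(j,k)$ we also have $z\in -e_j+S(j,k)$ (shift $y_j$ by $2$, which stays even and within $[-k,k]$ precisely because $z_j\ne\pm k$ keeps us away from the boundary), and the two contributions cancel with opposite signs $\e_j$ and $-\e_j$. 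Hence $a(z,\e)=0$. The identical cancellation argument applies to $b_{i,j}$: whenever $z\in\delta k+\e_{[n]\setminus S}+L_{[n]\setminus S}$, the hypothesis $z_t\ne\pm k$ for $t\in S$ would force $\delta_t k\ne\pm k$, which is absurd, so in fact no term of $b_{i,j}$ contributes at all, and $b_{i,j}(z,\e)=0$.
\end{proof}

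The main obstacle — and the step to carry out carefully — is the second case: one must track the parity constraints (all coordinates odd, $y_j$ even, $k$ odd so $k\pm1$ even) together with the box constraint $[-k,k]^n$, and observe that the ``$\pm k\pmod m$'' exclusion is exactly the boundary obstruction that prevents the sign-flip symmetry $y_j\mapsto y_j+2$ (resp.\ the relabeling within $S$) from staying inside the support set; away from that boundary the symmetry is available and produces the cancellation. Everything else is bookkeeping over the definitions already given, so the argument is a short case analysis rather than a computation.
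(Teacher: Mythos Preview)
Your proof is correct and follows the paper's own two-case analysis: when some coordinate has $|z_{t_0}|>k$ every indicator in~\eqref{defaverage} and~\eqref{defupsilon} is supported in $[-k,k]^n$ and hence vanishes, and when all $|z_t|<k$ the paired terms cancel (for $a$ via the shift $y_j\mapsto y_j\pm 2$; for $b_{i,j}$ each indicator is already zero since any $t\in S$ forces $z_t=\delta_t k=\pm k$). The exposition wanders a bit before settling on the cancellation for $a$, and the phrase ``opposite signs $\e_j$ and $-\e_j$'' is slightly off---the cancellation is $\mathbf 1_{e_j+S(j,k)}(z)=\mathbf 1_{-e_j+S(j,k)}(z)$ within a single $j$-term, both carrying the \emph{same} sign $\e_j$---but the mathematics is sound.
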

\begin{proof} We may assume that $z\in [-m/2,m/2]^n$. If there is $t_0\in [n]$ for which $|z_{t_0}|>k$ then all the terms in the right hand side of~\eqref{defaverage} and~\eqref{defupsilon} are $0$. If $|z_t|<k$ for all $t\in [n]$, then all the terms in the right hand side of~\eqref{defaverage} and~\eqref{defupsilon} cancel out.
\end{proof}
It follows that for $z\notin\mathbb S$ we have $a(z,\e)=b_{i,j}(z,\e)=0$ for every $\e\in\{-1,1\}^n$ and every $0\le j\le i\le n$. Thus, in particular, identity~\eqref{eq:b identity} can be rewritten as:
\begin{multline}\label{eq:b identity'}
\sum_{y\in x+\mathbb S}b_{i,j}(y-x,\e)f(y)   \\=  {k^{n-i}}\sum_{\substack{ S \subseteq [n] \\ |S|=i}} \sum_{\substack{\delta \in \{-1,1\}^{S} \\ \langle \delta , \e_{S} \rangle=i-2j}} \Big(\Delta_{[n]\setminus S}f(x+\delta k+\e_{[n]\setminus S}) -\Delta_{[n]\setminus S}f(x+\delta k-\e_{[n]\setminus S})\Big).
\end{multline}
Note that the definition~\eqref{defupsilon} shows that for $z\in \mathbb S$ we have
\begin{equation}\label{eq:arrows appear}
a(z,\e)=\sum_{\substack{t\in [n]\\z_t=k}}\e_t-\sum_{\substack{t\in [n]\\z_t=-k}}\e_t=\pMmkcor{z \odot \e}.
\end{equation}
Using Claim~\ref{claim:even zeros} and Claim~\ref{claim:boundary zeros}, in conjunction with~\eqref{eq:a identity} and~\eqref{eq:arrows appear}, we conclude that:
\begin{lemma}\label{prop:sumepsilon} The following identity holds for all $x\in \Z_m^n$ and $\e\in \{-1,1\}^n$:
\begin{equation}\label{eq:with cardinality}
\sum_{j=1}^{n}\e_{j}\big({\ep f(x+e_{j})-\ep f(x-e_{j})}\big) = \frac{1}{k(k+1)^{n-1}}\sum_{y\in x+\mathbb S} \pMmkcor{(y-x) \odot \e}f(y).
\end{equation}
\end{lemma}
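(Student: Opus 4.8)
The plan is to prove Lemma~\ref{prop:sumepsilon} by directly combining the algebraic identity~\eqref{eq:a identity} with the vanishing results of Claim~\ref{claim:even zeros} and Claim~\ref{claim:boundary zeros}, together with the explicit evaluation~\eqref{eq:arrows appear} of $a(z,\e)$ on the set $\mathbb S$. First I would rewrite the left-hand side of~\eqref{eq:a identity}: that identity states
\[
\sum_{y\in \bbZ_m^n} a(y-x,\e)f(y) = k(k+1)^{n-1}\sum_{j=1}^n \e_j\bigl(\ep f(x+e_j)-\ep f(x-e_j)\bigr),
\]
so dividing by $k(k+1)^{n-1}$ expresses the smoothed Rademacher sum as $\frac{1}{k(k+1)^{n-1}}\sum_{y\in\bbZ_m^n} a(y-x,\e)f(y)$. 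The entire content of the proof is then to show that in this sum only terms with $y\in x+\mathbb S$ survive, and that on that set the coefficient $a(y-x,\e)$ equals $\pmkcor{(y-x)\odot\e}$ — wait, more precisely $\pMmkcor{(y-x)\odot\e}$.

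The second step is the restriction of the summation range. Set $z=y-x$. By Claim~\ref{claim:even zeros}, if any coordinate $z_t$ is even then $a(z,\e)=0$; by Claim~\ref{claim:boundary zeros}, if every coordinate $z_t$ is odd but no coordinate equals $\pm k \pmod m$, or some coordinate has absolute value exceeding $k$, then again $a(z,\e)=0$. The complement of these vanishing cases is exactly the condition that $z\in[-k,k]^n$ (as a representative set in $\bbZ_m^n$), all $z_t$ odd, and at least one $z_t=\pm k$; but since $\mathbb S$ is defined as $\{y\in[-k,k]^n:\ y_t\text{ odd }\forall t\}$, and $a$ automatically vanishes on $\mathbb S$ wherever no coordinate hits $\pm k$ (the terms cancel), we may simply sum over $z\in\mathbb S$, i.e.\ $y\in x+\mathbb S$, without changing the value. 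This is the point made in the sentence preceding~\eqref{eq:b identity'}. The third step is to substitute the evaluation~\eqref{eq:arrows appear}, namely that for $z\in\mathbb S$,
\[
a(z,\e)=\sum_{t:\,z_t=k}\e_t-\sum_{t:\,z_t=-k}\e_t=\pMmkcor{z\odot\e},
\]
which follows directly from inspecting definition~\eqref{defupsilon}: for $z\in\mathbb S$ the indicator $\mathbf 1_{e_j+S(j,k)}(z)$ is $1$ precisely when $z_j=1$ is even — no: rather, $z-e_j\in S(j,k)$ forces $z_j$ even unless... one checks that the only nonzero contributions come from coordinates at the boundary $\pm k$, giving the stated signed count. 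Combining these three steps yields~\eqref{eq:with cardinality} verbatim.

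I do not expect a serious obstacle here, since Lemma~\ref{prop:sumepsilon} is explicitly stated in the text to follow "immediately" from the cited claims and identities; the mild care needed is only in verifying that passing from the sum over all of $\bbZ_m^n$ to the sum over $x+\mathbb S$ loses nothing — which is exactly what the two claims guarantee — and in confirming that the boundary evaluation~\eqref{eq:arrows appear} is correctly signed. The heart of the matter, namely the algebraic identity~\eqref{eq:a identity} and the cancellation analysis, has already been carried out in Lemma~\ref{prop:hi} and in the two claims, so the proof of Lemma~\ref{prop:sumepsilon} is a short assembly: apply~\eqref{eq:a identity}, divide, use the vanishing outside $\mathbb S$, and substitute~\eqref{eq:arrows appear}.
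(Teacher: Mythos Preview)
Your proposal is correct and follows exactly the paper's own argument: apply identity~\eqref{eq:a identity}, use Claims~\ref{claim:even zeros} and~\ref{claim:boundary zeros} to restrict the sum to $x+\mathbb S$, and substitute the evaluation~\eqref{eq:arrows appear}. The self-corrections and hesitations in your write-up should be excised, but the underlying assembly of the cited ingredients is precisely what the paper does.
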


\begin{lemma}
\label{propzeros}If $z \in \mathbb S$ and $i \geq \pmkcor{z} $ then $\forall j \in \{0,\ldots,i\}$ and
$\forall \e \in \{- 1,1\}^{n}$, we have $b_{i,j}(z,\e)=0$.
\end{lemma}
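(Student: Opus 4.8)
The plan is to unpack the two indicator functions in the definition~\eqref{defaverage} of $b_{i,j}(z,\e)$ completely, using the defining conditions of the sets $L_B$ and of $\mathbb S$ together with the standing hypothesis $k<m/2$, which guarantees that every point of $[-k,k]^n$ has a unique representative modulo $m$.

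First I would record the following elementary observation. Suppose $S\subseteq[n]$, $\delta\in\{-1,1\}^S$, and that one of the indicators $\mathbf{1}_{\delta k+\e_{[n]\setminus S}+L_{[n]\setminus S}}(z)$, $\mathbf{1}_{\delta k-\e_{[n]\setminus S}+L_{[n]\setminus S}}(z)$ equals $1$. Then the corresponding witness $y\in L_{[n]\setminus S}$ has $y_i=0$ for every $i\in S$, and since $\e_{[n]\setminus S}$ is supported off $S$ this forces $z_i=\delta_i k$; in particular $z_i\in\{k,-k\}\pmod m$. Hence, setting
\[
R=\big\{\,l\in[n]:\ z_l\in\{k,-k\}\pmod m\,\big\},\qquad |R|=\pmkcor{z},
\]
only sets $S\subseteq R$ can contribute a non-zero term to $b_{i,j}(z,\e)$, and on such an $S$ the only sign vector that can contribute is the restriction to $S$ of the vector $\delta^\ast\in\{-1,1\}^R$ given by $\delta^\ast_l=z_l/k$.

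Now the hypothesis $i\ge\pmkcor{z}=|R|$ splits into two immediate cases. If $i>|R|$, there is no $S\subseteq R$ with $|S|=i$, so every term of $b_{i,j}(z,\e)$ vanishes. If $i=|R|$, the only admissible choice is $S=R$, and $b_{i,j}(z,\e)$ collapses to
\[
\sum_{\substack{\delta\in\{-1,1\}^R\\ \langle\delta,\e_R\rangle=i-2j}}\Big(\mathbf{1}_{\delta k+\e_{[n]\setminus R}+L_{[n]\setminus R}}(z)-\mathbf{1}_{\delta k-\e_{[n]\setminus R}+L_{[n]\setminus R}}(z)\Big).
\]
This is the one place where $z\in\mathbb S$ is genuinely used: for $l\notin R$ the coordinate $z_l$ is odd with $|z_l|\le k-2$, so $z_l\pm\e_l$ is even and lies in $(-k,k)$ with no wrap-around, whatever the sign; combined with $z_l=\delta^\ast_l k$ for $l\in R$, this shows that \emph{both} indicators above equal $1$ precisely when $\delta=\delta^\ast$, and are $0$ otherwise. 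Hence every summand equals $\mathbf{1}[\delta=\delta^\ast]-\mathbf{1}[\delta=\delta^\ast]=0$, so $b_{i,j}(z,\e)=0$ in this case too.

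I do not expect a real obstacle here: the argument is bookkeeping with the definitions of $L_B$, $S(j,k)$ and $\mathbb S$. The only points needing care are that passing to representatives modulo $m$ never causes wrap-around --- which is exactly what $k<m/2$ (and $|z_l|\le k-2$ off $R$) ensures --- and the observation that the sign in front of $\e_{[n]\setminus S}$ does not affect which $\delta$ survives, so the ``$+$'' and ``$-$'' indicators cancel term by term.
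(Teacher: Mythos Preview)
Your argument is correct and follows essentially the same route as the paper's proof: first observe that only $S\subseteq R=\{l:z_l\in\{-k,k\}\}$ with $\delta=\delta^\ast=(z_l/k)_{l\in S}$ can make an indicator non-zero, which immediately disposes of the case $i>|R|$; then for $i=|R|$ use that $|z_l|\le k-2$ for $l\notin R$ (since $z\in\mathbb S$) to see that both indicators are simultaneously $1$ for $\delta=\delta^\ast$ and hence cancel. The paper's proof is terser but structurally identical, including the key observation that membership in one of the two shifted copies of $L_{[n]\setminus S}$ forces membership in the other.
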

\begin{proof}
If $i>\pmkcor{z}$ then
$$
z\notin \Big(\delta k +\e_{[n]\setminus S}+L_{[n]\setminus S}\Big) \bigcup \Big(\delta k -\e_{[n]\setminus S}+L_{[n]\setminus S}\Big),
$$
for every $S\subseteq [n]$ with $|S|=i$, and $\delta\in\{-1,1\}^S$.
If $i=\pmkcor{z}$ then there exists exactly one subset $S\subseteq [n]$
in~\eqref{defaverage} where $z$ can appear, namely $S=\{\ell\in[n]:\ z_\ell\in \{-k,k\} \}$.
If
$$
z\in \Big(\delta k +\e_{[n]\setminus S}+L_{[n]\setminus S}\Big) \bigcup
\Big(\delta k -\e_{[n]\setminus S}+L_{[n]\setminus S}\Big),
$$
 for some $\delta\in\{-1,1\}^S$, then
$$
z\in \Big(\delta k +\e_{[n]\setminus S}+L_{[n]\setminus S}\Big)
\bigcap \Big(\delta k -\e_{[n]\setminus S}+L_{[n]\setminus S}\Big),
$$
since for all
coordinates $i\in[n]\setminus S$ we have $|z_i|<k$. Hence in this case the terms in the sum in the right hand side of~\eqref{defaverage} cancel out.
\end{proof}

\begin{lemma}
\label{prop2} For every $z \in \mathbb{S}$, $\e \in \{-1,1\}^{n}$, and $0 \leq j \leq i < \pmkcor{z}$,
\begin{equation*}
b_{i,j}(z,\e) =
\begin{cases}
\binom{\pmkcor{z}- j}{i-j}   & \mkcor{z\odot \e}=j, \\
- \binom{\pmkcor z - (i-j)}{j}  & \pkcor{z\odot \e}=i-j,\\
0 & \mathrm{otherwise}.
\end{cases}
\end{equation*}
\end{lemma}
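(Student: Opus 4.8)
\textbf{Proof plan for Lemma~\ref{prop2}.}
The plan is to carry out a direct combinatorial count of the defining sum~\eqref{defaverage} for $b_{i,j}(z,\e)$, exploiting the fact that for $z\in\mathbb S$ the only coordinates that matter are those where $z_\ell=\pm k\pmod m$. First I would fix $z\in\mathbb S$ and write $P=\{\ell:z_\ell=k\}$, $M=\{\ell:z_\ell=-k\}$, so that $|P|+|M|=\pmkcor z$, and $N=[n]\setminus(P\cup M)$, on which $|z_i|<k$ and $z_i$ is odd. The point, already implicit in the proof of Lemma~\ref{propzeros}, is that for a fixed $S\subseteq[n]$ with $|S|=i<\pmkcor z$ and a fixed sign vector $\delta\in\{-1,1\}^S$, the indicator $\mathbf 1_{\delta k\pm\e_{[n]\setminus S}+L_{[n]\setminus S}}(z)$ is nonzero exactly when: (i) $S\subseteq P\cup M$, so that the coordinates forced to equal $\pm k$ really do; (ii) $\delta_\ell=+1$ for $\ell\in P\cap S$ and $\delta_\ell=-1$ for $\ell\in M\cap S$, so the chosen signs match $z$; and (iii) on the coordinates of $N$ (which all lie in $[n]\setminus S$ since $S\subseteq P\cup M$) the value $z_i\in\{-k+1,\dots,k-1\}$ is even after subtracting $\pm\e_i$ — but $z_i$ is odd, so this forces $\e_i$-translate to land in $L$, which it automatically does, and moreover on $(P\cup M)\setminus S$ (also in $[n]\setminus S$) we would need $z_\ell=\pm k$ to lie in $\pm\e_\ell+L$, i.e.\ $\pm k\mp\e_\ell$ even and of absolute value $<k$, which is impossible. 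Hence in fact we need $(P\cup M)\setminus S=\emptyset$ is \emph{not} required — rather, the terms with $(P\cup M)\setminus S\neq\emptyset$ simply contribute $0$ because $z$ cannot lie in the corresponding translate. So the surviving $S$ are precisely the $i$-subsets of $P\cup M$, and for each such $S$ exactly one $\delta$ survives.

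Second, I would impose the constraint $\langle\delta,\e_S\rangle=i-2j$ from~\eqref{defaverage}. With $\delta$ forced as above, $\delta_\ell\e_\ell=\e_\ell$ for $\ell\in P\cap S$ and $\delta_\ell\e_\ell=-\e_\ell$ for $\ell\in M\cap S$; writing $p=|P\cap S|$, $u=|M\cap S|$ with $p+u=i$, the inner product $\langle\delta,\e_S\rangle$ records a signed count that I would track via the quantities $\pkcor{z\odot\e}$ and $\mkcor{z\odot\e}$. The key observation is that the $+1$ summand $\mathbf 1_{\delta k+\e_{[n]\setminus S}+L_{[n]\setminus S}}(z)$ survives only when the coordinates of $N$ satisfy $z_i-\e_i$ even (always true) \emph{and} additionally the coordinates in $(P\cup M)\setminus S$, which must have $z_\ell=\pm k$, satisfy $z_\ell=\e_\ell\cdot(\text{something})$... more precisely, membership in $\e_{[n]\setminus S}+L_{[n]\setminus S}$ forces for $\ell\in(P\cup M)\setminus S$ that $z_\ell-\e_\ell$ be even with $|z_\ell-\e_\ell|<k$, which fails. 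Therefore $S$ must contain \emph{all} of $P\cup M$, contradicting $|S|=i<\pmkcor z$ — \emph{unless} we are in the $-1$ summand. Re-examining: the resolution, exactly as in Lemma~\ref{propzeros}, is that the $+1$ and $-1$ translates differ only on coordinates $\ell\in[n]\setminus S$ where $|z_\ell|=k$; on the $N$-coordinates they agree. So the cancellation between the two terms is governed by the sign coordinates on $(P\cup M)\setminus S$. This is where the case split in the statement comes from: the term is $+1$ iff the ``$M$-type'' coordinates outside $S$ are exactly captured, giving the condition $\mkcor{z\odot\e}=j$, and similarly the $-1$ contribution corresponds to $\pkcor{z\odot\e}=i-j$. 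I would make this precise by showing that for the term to not cancel, $z\odot\e$ restricted to $(P\cup M)\setminus S$ must be constant ($\equiv k$ or $\equiv -k$), and count how many $S$ achieve this compatibly with $p+u=i$, $u=j$ (first case) — this gives $\binom{\pmkcor z-j}{i-j}$ ways to choose the remaining $i-j$ elements of $S$ from $P$ — or $p=i-j$ with the analogous count $\binom{\pmkcor z-(i-j)}{j}$ and an overall sign $-1$ in the second case.

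Third, I would verify the three cases are mutually exclusive when $i<\pmkcor z$: if both $\mkcor{z\odot\e}=j$ and $\pkcor{z\odot\e}=i-j$ held then $\pmkcor z=\pmkcor{z\odot\e}=\pkcor{z\odot\e}+\mkcor{z\odot\e}=i$, contradicting $i<\pmkcor z$; hence the ``otherwise'' branch ($b_{i,j}=0$) is well defined and I would confirm it by noting that if neither equality holds then on $(P\cup M)\setminus S$ the restriction of $z\odot\e$ is never constant of the required sign for \emph{any} admissible $S$, so every surviving pair of $+1$/$-1$ indicators cancels.

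The main obstacle I anticipate is bookkeeping the exact membership conditions for $z\in\delta k\pm\e_{[n]\setminus S}+L_{[n]\setminus S}$ — in particular correctly identifying which coordinates in $[n]\setminus S$ force constraints and tracking how the parity/range conditions defining $L_{[n]\setminus S}$ interact with $z_\ell=\pm k$ versus $|z_\ell|<k$. Getting the binomial coefficients and the sign exactly right (rather than off by a shift in the upper or lower index) will require care; the cross-check is the symmetry $b_{i,j}\leftrightarrow -b_{i,i-j}$ under swapping the roles of $\pkcor{}$ and $\mkcor{}$ (equivalently $\e\mapsto-\e$ or $z\mapsto-z$), together with the boundary consistency with Lemma~\ref{propzeros} as $i\uparrow\pmkcor z$, where $\binom{\pmkcor z-j}{i-j}$ should degenerate appropriately.
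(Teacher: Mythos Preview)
Your plan is the paper's argument: restrict to $S\subseteq K\stackrel{\mathrm{def}}{=}\{\ell:|z_\ell|=k\}$, observe that $\delta$ is then forced to $\delta_\ell=\sgn(z_\ell)$, impose the inner-product constraint, and determine for which $S$ the $+$ and $-$ indicators survive on the coordinates in $K\setminus S$. The paper's proof is five sentences and says exactly this.

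Your execution, however, is tangled because you are tracking the wrong partition of $K$. The sets $P=\{\ell:z_\ell=k\}$ and $M=\{\ell:z_\ell=-k\}$ are red herrings. Since the forced $\delta$ satisfies $\delta_\ell\e_\ell=\sgn(z_\ell\e_\ell)$, \emph{everything} --- both the constraint $\langle\delta,\e_S\rangle=i-2j$ and the survival of the two indicators on $K\setminus S$ --- is governed by the partition $R=\{\ell:(z\odot\e)_\ell=k\}$, $T=\{\ell:(z\odot\e)_\ell=-k\}$. Concretely: the inner-product constraint reads $|S\cap T|=j$ (this is the paper's ``$\mkcor{z_S\odot\e_S}=j$''), \emph{not} your $u=|M\cap S|=j$; and for $\ell\in K\setminus S$ the condition $|z_\ell-\e_\ell|<k$ is not ``impossible'' as you twice assert --- it holds exactly when $\ell\in R$, while $|z_\ell+\e_\ell|<k$ holds exactly when $\ell\in T$. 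Thus the $+$ indicator survives iff $T\subseteq S$ (forcing $|T|=j$, i.e.\ $\mkcor{z\odot\e}=j$) and the $-$ indicator survives iff $R\subseteq S$ (forcing $|R|=i-j$, i.e.\ $\pkcor{z\odot\e}=i-j$). Your sentence ``choose the remaining $i-j$ elements of $S$ from $P$'' is literally wrong --- they are chosen from $R$, which in the first case has size $\pmkcor z-j$, not $\pkcor z$ --- even though you then write down the correct binomial coefficient. Once you swap $(P,M)$ for $(R,T)$ throughout, the detours and self-corrections in your second paragraph evaporate and the three-case statement is immediate.
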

\begin{proof}
By looking at the elements of
$$
\Big(\delta k +\e_{[n]\setminus S}+L_{[n]\setminus S}\Big) \bigcup
\Big(\delta k -\e_{[n]\setminus S}+L_{[n]\setminus S}\Big),
$$
 it is clear that we must have $S\subseteq \{h:\; z_h\in\{-k,k\}\}$ in order to get a nonzero contribution to the right hand side of~\eqref{defaverage}. For such an  $S$
there is at most one $\delta\in \{-1,1\}^S$ which can contribute to the sum, namely $\delta_h=\sgn(z_h)$ for every $h\in S$. But since this $\delta$ should also satisfy $\langle \delta  ,\e_S \rangle=i-2j$, we conclude that
a non-zero contribution can occur only when $\mkcor{z_S \odot \e_S}=j$.
In those cases, there is an actual contribution only if either $\sgn(z_h\e_h)=1$ for every $h\in \{\ell:\; z_\ell\in\{-k,k\}\}\setminus S$, or $\sgn(z_h\e_h)=-1$ for every $h\in \{\ell:\; z_\ell\in\{-k,k\}\}\setminus S$, and those contributions have different signs.
The claim now follows.
\end{proof}

The following  lemma relates, via Lemma~\ref{prop2}, what we have done so far to the bivariate Bernoulli numbers.

\begin{lemma}
\label{prop3}
There exists a sequence $\left\{{h_{\alpha,\beta}}\right\}_{\substack{0 \leq \alpha \leq n \\ 0 \leq \beta \leq \alpha}}\subseteq \mathbb R$ such that for all $y\in \Z_m^n$ and all $\e\in \{-1,1\}^n$,
\begin{align}
\label{eq:prop3-1}
\pMmkcor{y\odot \e}  &= \sum_{\alpha=0}^{n}\sum_{\beta=0}^{\alpha}h_{\alpha,\beta}b_{\alpha,\beta}(y,\e), \\
 \label{eq:prop3-2} |h_{\alpha,\beta}| &\lesssim\frac{(\alpha-\beta)! \beta!}{2^{\alpha}}, \text{ for all } 0\le \beta \le \alpha\\
 \label{eq:prop3-3} h_{\alpha,\beta} &=h_{\alpha,\alpha-\beta} .
\end{align}
\end{lemma}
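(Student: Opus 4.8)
The plan is to build the coefficients $h_{\alpha,\beta}$ from the bivariate Bernoulli numbers $B_{r,s}$ introduced in Section~\ref{sec:bernoulli}, using Lemma~\ref{prop2} as the dictionary between the combinatorial quantities $b_{\alpha,\beta}(y,\e)$ and the arrow-statistics of $y\odot\e$. Fix $z=y\odot\e\in\mathbb S$ (if $z\notin\mathbb S$ all the $b_{\alpha,\beta}$ vanish by Claims~\ref{claim:even zeros} and~\ref{claim:boundary zeros}, and so does $\pMmkcor{z}$, so there is nothing to prove). Write $p=\pkcor z$, $\bar p=\mkcor z$, so that $\pmkcor z=p+\bar p$ and $\pMmkcor z=p-\bar p$. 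By Lemma~\ref{prop2}, for $0\le\beta\le\alpha<p+\bar p$ the number $b_{\alpha,\beta}(z,\e)$ is nonzero only when $\bar p=\beta$ (giving $\binom{p+\bar p-\beta}{\alpha-\beta}=\binom{p}{\alpha-\beta}$) or when $p=\alpha-\beta$ (giving $-\binom{p+\bar p-(\alpha-\beta)}{\beta}=-\binom{\bar p}{\beta}$), and for $\alpha\ge p+\bar p$ it vanishes by Lemma~\ref{propzeros}. So the right-hand side of~\eqref{eq:prop3-1} collapses, for this fixed $z$, to
\[
\sum_{\alpha=\bar p}^{p+\bar p-1} h_{\alpha,\bar p}\binom{p}{\alpha-\bar p}\;-\;\sum_{\beta=0}^{\bar p-1} h_{p+\beta,\beta}\binom{\bar p}{\beta}
\;=\;\sum_{a=0}^{p-1} h_{a+\bar p,\bar p}\binom{p}{a}\;-\;\sum_{b=0}^{\bar p-1}h_{p+b,b}\binom{\bar p}{b},
\]
after reindexing $a=\alpha-\bar p$, $b=\beta$ (the $\alpha=p+\bar p$ term is absent because of the strict inequality in Lemma~\ref{prop2}; equivalently it is killed by Lemma~\ref{propzeros}). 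This must equal $p-\bar p$ for every pair $(p,\bar p)$ of nonnegative integers with $p+\bar p\le n$. Comparing with the defining recursion~\eqref{eq:r-s} for $B_{r,s}$, namely $r-s=\sum_{a=0}^{r-1}B_{a,s}\binom ra-\sum_{b=0}^{s-1}B_{r,b}\binom sb$, the natural guess is
\[
h_{\alpha,\beta}\stackrel{\mathrm{def}}{=} B_{\alpha-\beta,\,\beta},
\]
so that $h_{a+\bar p,\bar p}=B_{a,\bar p}$ and $h_{p+b,b}=B_{p,b}$ and the displayed identity becomes exactly~\eqref{eq:r-s} with $(r,s)=(p,\bar p)$.

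With this definition the three claims are immediate. Identity~\eqref{eq:prop3-1} holds by the computation above for every $y$ with $y\odot\e\in\mathbb S$ and trivially ($0=0$) otherwise. The bound~\eqref{eq:prop3-2} is precisely the asymptotic estimate~\eqref{eq:asymptotic bernoulli bound}: $|h_{\alpha,\beta}|=|B_{\alpha-\beta,\beta}|\lesssim\frac{(\alpha-\beta)!\,\beta!}{2^{(\alpha-\beta)+\beta}}=\frac{(\alpha-\beta)!\,\beta!}{2^\alpha}$ (the finitely many exceptional pairs in~\eqref{eq:asymptotic bernoulli bound} are absorbed into the implied constant). The symmetry~\eqref{eq:prop3-3} is the symmetry~\eqref{eq:symmetry} of the bivariate Bernoulli numbers: $h_{\alpha,\alpha-\beta}=B_{\alpha-(\alpha-\beta),\,\alpha-\beta}=B_{\beta,\alpha-\beta}=B_{\alpha-\beta,\beta}=h_{\alpha,\beta}$.

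The only genuine point to be careful about is the bookkeeping in the first paragraph: one must check that for a fixed $z\in\mathbb S$ the two families of nonzero terms ($\beta=\bar p$ versus $\alpha-\beta=p$) do not overlap in a way that double-counts (they coincide only when $\beta=\bar p$ and $\alpha-\beta=p$, i.e.\ $\alpha=p+\bar p$, which is outside the range $\alpha<\pmkcor z$ in Lemma~\ref{prop2} and is handled by Lemma~\ref{propzeros}), and that the ``otherwise'' case of Lemma~\ref{prop2} together with Lemma~\ref{propzeros} really does kill every remaining term so that the sum over all $0\le\beta\le\alpha\le n$ reduces cleanly to the two one-parameter sums matching~\eqref{eq:r-s}. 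This is the main obstacle, but it is entirely a matter of carefully tracking indices rather than of any new idea; once the reduction is in place, the identification $h_{\alpha,\beta}=B_{\alpha-\beta,\beta}$ does all the work and~\eqref{eq:prop3-1}--\eqref{eq:prop3-3} follow from~\eqref{eq:r-s}, \eqref{eq:asymptotic bernoulli bound} and~\eqref{eq:symmetry} respectively.
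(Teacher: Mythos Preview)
Your proof is essentially identical to the paper's: you set $h_{\alpha,\beta}=B_{\alpha-\beta,\beta}$, use Lemma~\ref{prop2} and Lemma~\ref{propzeros} to collapse the double sum to exactly the recursion~\eqref{eq:r-s}, and then read off~\eqref{eq:prop3-2} and~\eqref{eq:prop3-3} from~\eqref{eq:asymptotic bernoulli bound} and~\eqref{eq:symmetry}. One small caveat: your parenthetical claim that $\pMmkcor{z}=0$ whenever $z=y\odot\e\notin\mathbb S$ is not correct (for instance, $y$ could have one even coordinate and another equal to $k$), but this is harmless---the paper's own proof simply does not address the case $y\notin\mathbb S$, since the identity is only ever invoked for $y\in\mathbb S$ (compare~\eqref{eq:with cardinality} and~\eqref{eq:the b indentity}).
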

\begin{proof}
Write $r=\pkcor{z\odot \e}$ and $s=\mkcor{z\odot \e}$. Thus
$r+s=\pmkcor z$ and $r-s=\pMmkcor{z\odot \e}$. With this notation, if we substitute the values of $b_{\alpha,\beta}(y,\e)$ from Lemma~\ref{prop2}, the desired identity~\eqref{eq:prop3-1} becomes:
\begin{multline}\label{eq:get bernoulli}
r-s=\sum_{\alpha=s}^n h_{\alpha,s}{r\choose \alpha-s} -\sum_{\beta=0}^n h_{\beta+r,\beta} {s\choose \beta}\stackrel{(\spadesuit)}{=}\sum_{a=0}^{n-s} h_{a+s,s}{r\choose a} -\sum_{b=0}^n h_{b+r,b} {s\choose b}\\
\stackrel{(\clubsuit)}{=} \sum_{a=0}^{r-1} h_{a+s,s}{r\choose a} -\sum_{b=0}^{s-1} h_{b+r,b} {s\choose b},
\end{multline}
where in $(\spadesuit)$ we used the change of variable $\beta=b$, $\alpha=a+s$, and in $(\clubsuit)$ we noted that $r+s=\pmkcor z\le n$ and that the terms corresponding to $a>r$ or $b>s$ vanish, while the terms corresponding to $a=r$ and $b=s$ cancel out.
Thus, the desired identity~\eqref{eq:get bernoulli} shows that we must take $h_{a+b,b}=B_{a,b}$, or $h_{\alpha,\beta}=B_{\alpha-\beta,\beta}$. The bound~\eqref{eq:prop3-2} is now the same as~\eqref{eq:asymptotic bernoulli bound}, and the identity~\eqref{eq:prop3-3} is the same as~\eqref{eq:symmetry}.
\end{proof}

\subsection{Putting things together}\label{sec:together}

We are now ready to complete the proof of Lemma~\ref{importantlemma} using the tools developed in the previous two sections.

\begin{lemma} Let $\left\{{h_{\alpha,\beta}}\right\}_{\substack{0 \leq \alpha \leq n \\ 0 \leq \beta \leq \alpha}}$ be the sequence from Lemma~\ref{prop3}. Then for all $f:\Z_m^n\to X$ and all $\e\in \{-1,1\}^n$ we have,
\label{propdiagonal}
\begin{multline}\label{eq:biggoal in lemma}
\int_{\bbZ_{m}^{n}}\Bigg\|\frac{1}{k(k+1)^{n-1}}\left({\sum_{i=0}^{n}\sum_{j=0}^{i}h_{i,j}\sum_{y\in x+\mathbb S}b_{i,j}(y-x,\e)f(y)}\right)\Bigg\|_{X}^{q}d\mu(x)\\
 \lesssim_q
\sum_{\ell=0}^{n}\frac{\left({n/k}\right)^{\ell q}}{{n\choose
\ell}}\sum_{\substack{S \subseteq [n] \\
|S|=\ell}}\int_{\bbZ_{m}^{n}}\left\|f(x+\e_{[n]\setminus
S})-f(x)\right\|_{X}^{q}d\mu(x).
\end{multline}
\end{lemma}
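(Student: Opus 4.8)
The plan is to expand the left-hand side of~\eqref{eq:biggoal in lemma} using the combinatorial identity~\eqref{eq:b identity'}, so that the inner sum $\sum_{y\in x+\mathbb S}b_{i,j}(y-x,\e)f(y)$ becomes $k^{n-i}\sum_{S,\delta}\big(\Delta_{[n]\setminus S}f(x+\delta k+\e_{[n]\setminus S})-\Delta_{[n]\setminus S}f(x+\delta k-\e_{[n]\setminus S})\big)$, where the inner sum is over $|S|=i$ and $\delta\in\{-1,1\}^S$ with $\langle\delta,\e_S\rangle=i-2j$. After this substitution the operator $\frac{1}{k(k+1)^{n-1}}\sum_{i,j}h_{i,j}(\cdots)$ is a linear combination, indexed by $S\subseteq[n]$ and $\delta\in\{-1,1\}^S$, of differences of the form $\Delta_{[n]\setminus S}f(x+\delta k\pm\e_{[n]\setminus S})$, with scalar coefficients that are (up to the factor $k^{n-i}/(k(k+1)^{n-1})\asymp k^{-i}$) equal to $h_{i,j}$ summed over the appropriate constraint; the key point is that the bound~\eqref{eq:prop3-2}, $|h_{\alpha,\beta}|\lesssim (\alpha-\beta)!\beta!/2^\alpha$, controls these coefficients.

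Next I would group the resulting terms by $\ell=|S|$, apply the triangle inequality in $X$ together with convexity of $t\mapsto t^q$ (H\"older) to pass the norm inside, and integrate over $x\in\Z_m^n$. For each fixed $S$ with $|S|=\ell$ and each $\delta$, translation invariance of $\mu$ lets us replace $\int_{\Z_m^n}\|\Delta_{[n]\setminus S}f(x+\delta k+\e_{[n]\setminus S})-\Delta_{[n]\setminus S}f(x+\delta k-\e_{[n]\setminus S})\|_X^q\,d\mu(x)$ by $\int_{\Z_m^n}\|\Delta_{[n]\setminus S}f(x+\e_{[n]\setminus S})-\Delta_{[n]\setminus S}f(x)\|_X^q\,d\mu(x)$. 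Since $\Delta_{[n]\setminus S}$ is an averaging (convolution) operator, it is a contraction on $L_q(\mu;X)$, so this last integral is at most $\int_{\Z_m^n}\|f(x+\e_{[n]\setminus S})-f(x)\|_X^q\,d\mu(x)$, which is exactly the quantity appearing on the right-hand side of~\eqref{eq:biggoal in lemma}. It remains to check that the accumulated scalar factors, when raised to the $q$-th power and summed over all $S$ with $|S|=\ell$ and all admissible $\delta$, produce a quantity $\lesssim_q (n/k)^{\ell q}/\binom{n}{\ell}$ per set $S$ (equivalently $\lesssim_q (n/k)^{\ell q}$ after summing the $\binom n\ell$ choices of $S$, once one divides by $\binom n\ell$ as written).

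The main obstacle is this last bookkeeping step: one must show that $k^{-\ell}\sum_{j}|h_{\text{(terms with }|S|=\ell)}|$, together with the number of $\delta$'s satisfying $\langle\delta,\e_S\rangle=i-2j$ (which is $\binom{\ell}{j}$) and the convexity loss from H\"older, all combine to give a clean bound of the right order. Concretely, after the triangle inequality the coefficient attached to the block $|S|=\ell$ is roughly $k^{-\ell}\sum_{j=0}^{\ell}\binom{\ell}{j}|B_{\ell-j,j}|\lesssim k^{-\ell}\sum_{j=0}^{\ell}\binom{\ell}{j}\frac{(\ell-j)!j!}{2^\ell}=k^{-\ell}\frac{(\ell+1)!}{2^\ell}$, and one then needs $\left(\frac{(\ell+1)!}{(2k)^\ell}\right)^q\cdot(\text{combinatorial weights})\lesssim_q \frac{(n/k)^{\ell q}}{\binom n\ell}$. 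Here the hypothesis $k\ge 2n$ (indeed $k\gtrsim n$) is what makes $(\ell+1)!\lesssim_q n^\ell$-type estimates valid, using $\ell\le n$ and crude factorial bounds; I expect the argument to be a careful but elementary manipulation of binomial coefficients and factorials, with the symmetry~\eqref{eq:prop3-3} of the $h_{\alpha,\beta}$ and the $\delta\leftrightarrow-\delta$ symmetry used to keep the counting clean. The worst case is $\ell$ of order $n$, where one must verify that the factorial growth of $(\ell+1)!$ is genuinely dominated by $k^\ell\gtrsim(2n)^\ell$, leaving the required power of $n/k$ with room to spare.
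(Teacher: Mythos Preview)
Your outline is essentially the paper's approach: expand via~\eqref{eq:b identity'}, pull the norm inside, integrate using translation invariance and the contraction property of $\Delta_{[n]\setminus S}$, and then check that the scalar bookkeeping lands on $(n/k)^{\ell q}/\binom{n}{\ell}$. The ingredients and their order are correct.

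Two points where your write-up is too loose and where the paper is explicit:

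\medskip
\noindent\textbf{The H\"older step is the whole game.} You write ``apply the triangle inequality\ldots\ together with convexity of $t\mapsto t^q$ (H\"older) to pass the norm inside'' and then defer the ``combinatorial weights'' to a later check. But a naive H\"older over the index set (there are up to $3^n$ pairs $(S,\delta)$, or even just $n+1$ values of $\ell$) costs a factor polynomial in $n$ that you cannot afford. The paper handles this by a \emph{weighted} convexity step: writing $D_{i,j}(x)$ for the norm of the $(i,j)$-block, one uses
\[
\Bigl(\sum_{i=0}^{n}\sum_{j=0}^{i}|h_{i,j}|D_{i,j}\Bigr)^q
=\Bigl(\sum_{i=0}^n 2^{-(i+1)}\sum_{j=0}^{i}2^{i+1}|h_{i,j}|D_{i,j}\Bigr)^q
\le \sum_{i=0}^{n}\sum_{j=0}^{i}2^{(i+1)(q-1)}(i+1)^{q-1}|h_{i,j}|^q D_{i,j}^q,
\]
i.e.\ geometric weights $2^{-(i+1)}$ over $i$ followed by H\"older over the $i{+}1$ values of $j$. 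This is what makes the final coefficient come out as
\[
\frac{(i+1)^q}{2^i\binom{n}{i}}\Bigl(\frac{n!}{k^i(n-i)!}\Bigr)^q \lesssim_q \frac{(n/k)^{iq}}{\binom{n}{i}},
\]
using only $(i+1)^q 2^{-i}\lesssim_q 1$ and $n!/(n-i)!\le n^i$. Your ``$(\tfrac{(\ell+1)!}{(2k)^\ell})^q$ times combinatorial weights'' is exactly this computation, but you should make the weighted H\"older explicit; without it the argument does not close.

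\medskip
\noindent\textbf{The hypothesis $k\ge 2n$ is not used here.} The lemma holds as stated with no lower bound on $k$; the paper's proof never invokes $k\gtrsim n$. The estimate $(\ell+1)! \le (\ell+1)\,n!/(n-\ell)!\cdot\binom{n}{\ell}^{-1}$ is a tautology, and the only inequalities needed are $(i+1)^q 2^{-i}\lesssim_q 1$ and $n!/(n-i)!\le n^i$. The condition $k\ge 2n$ enters later, in the proof of Theorem~\ref{maintheorem}, not in this lemma.

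\medskip
A minor remark: after translation invariance you get $\int\|\Delta_{[n]\setminus S}f(x+\e_{[n]\setminus S})-\Delta_{[n]\setminus S}f(x-\e_{[n]\setminus S})\|_X^q\,d\mu$, which by the contraction property is $\le \int\|f(x+\e_{[n]\setminus S})-f(x-\e_{[n]\setminus S})\|_X^q\,d\mu$; you then need one more triangle inequality (and translation invariance) to reach $2^q\int\|f(x+\e_{[n]\setminus S})-f(x)\|_X^q\,d\mu$. You jump straight to the last expression.
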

\begin{proof}
For every $x\in \Z_m^n$ and $0\le j\le i\le n $ write
$$
D_{i,j}(x)\stackrel{\mathrm{def}}{=} \Bigg\|\frac{1}{k(k+1)^{n-1}}\left({\sum_{y\in x+\mathbb S}b_{i,j}(y-x,\e)f(y)}\right)\Bigg\|_{X}.
$$
Note that,
\begin{multline}\label{eq:smart holder}
\left(\sum_{i=0}^{n}\sum_{j=0}^{i}|h_{i,j}|D_{i,j}(x)\right)^q=
\left(\sum_{i=0}^{n}2^{-(i+1)}\sum_{j=0}^{i}2^{i+1}|h_{i,j}|D_{i,j}(x)\right)^q\\\stackrel{(*)}{\le} \sum_{i=0}^n 2^{-(i+1)}\left(\sum_{j=0}^{i}2^{i+1}|h_{i,j}|D_{i,j}(x)\right)^q\stackrel{(**)}{\le}  \sum_{i=0}^{n}\sum_{j=0}^{i} 2^{(i+1)(q-1)} (i+1)^{q-1} |h_{i,j}|^{q}D_{i,j}(x)^q,
\end{multline}
where in $(*)$ we used the convexity of the function $t\mapsto t^q$ and that $\sum_{i=0}^\infty 2^{-(i+1)}=1$, and in $(**)$ we used H\"older's inequality.
It follows from~\eqref{eq:smart holder}, combined with the bound~\eqref{eq:prop3-2} on $h_{i,j}$, that,
\begin{multline}\label{eq:for integration}
\Bigg\|\frac{1}{k(k+1)^{n-1}}\left({\sum_{i=0}^{n}\sum_{j=0}^{i}h_{i,j}\sum_{y\in \bbZ_{m}^{n}}b_{i,j}(y-x,\e)f(y)}\right)\Bigg\|_{X}^{q}\le \left(\sum_{i=0}^{n}\sum_{j=0}^{i}|h_{i,j}|D_{i,j}(x)\right)^q\\ \lesssim
\sum_{i=0}^{n}\sum_{j=0}^{i} 2^{(i+1)(q-1)} (i+1)^{q-1} \left(\frac{(i-j)!j!}{2^i}\right)^{q}D_{i,j}(x)^q.
\end{multline}

Now, $D_{i,j}(x)$ can be estimated using the identity~\eqref{eq:b identity'} as follows:
\begin{multline}\label{eq:use b identity}
k^iD_{i,j}(x)\le\frac{(k+1)^{n-1}}{k^{n-i-1}}D_{i,j}(x)\\\le \sum_{\substack{ S \subseteq [n] \\ |S|=i}} \sum_{\substack{\delta \in \{-1,1\}^{S} \\ \langle \delta , \e_{S} \rangle=i-2j}} \Big\|\Delta_{[n]\setminus S}f(x+\delta k+\e_{[n]\setminus S}) -\Delta_{[n]\setminus S}f(x+\delta k-\e_{[n]\setminus S})\Big\|_X.
\end{multline}
Note that the number of terms in the sum in the right hand side of~\eqref{eq:use b identity} is ${n\choose i}{i\choose j}$.
Thus
\begin{multline}\label{eq:binomial holder-1}
 D_{i,j}(x)^q\le \frac{1}{k^{iq}}{n\choose i}^{q-1}{i\choose j}^{q-1}\\ \cdot\sum_{\substack{ S \subseteq [n] \\ |S|=i}} \sum_{\substack{\delta \in \{-1,1\}^{S} \\ \langle \delta , \e_{S} \rangle=i-2j}} \Big\|\Delta_{[n]\setminus S}f(x+\delta k+\e_{[n]\setminus S}) -\Delta_{[n]\setminus S}f(x+\delta k-\e_{[n]\setminus S})\Big\|_X^q.
\end{multline}
If we integrate inequality~\eqref{eq:binomial holder-1} with respect to $x$, use the translation invariance of $\mu$ to eliminate the additive term $\delta k$ in the argument of the integrands, and use the fact that $\Delta_B$ is an averaging operator for all $B\subseteq [n]$, we obtain the bound
\begin{multline}\label{eq:binomial holder}
 \int_{\Z_m^n}D_{i,j}(x)^qd\mu(x)\le \frac{1}{k^{iq}}{n\choose i}^{q-1}{i\choose j}^{q}
 \sum_{\substack{ S \subseteq [n] \\ |S|=i}}
 \int_{\Z_m^n}\Big\|f(x+\e_{[n]\setminus S}) -f(x-\e_{[n]\setminus
 S})\Big\|_X^qd\mu(x)\\\le
\frac{2^q}{k^{iq}}{n\choose i}^{q-1}{i\choose j}^{q}
 \sum_{\substack{ S \subseteq [n] \\ |S|=i}}
 \int_{\Z_m^n}\Big\|f(x+\e_{[n]\setminus S}) -f(x)\Big\|_X^qd\mu(x),
\end{multline}
where in the last step of~\eqref{eq:binomial holder} we used the
triangle inequality as follows:
$$
\Big\|f(x+\e_{[n]\setminus S}) -f(x-\e_{[n]\setminus
 S})\Big\|_X^q\le 2^{q-1}\Big\|f(x+\e_{[n]\setminus S}) -f(x)\Big\|_X^q+2^{q-1}\Big\|f(x) -f(x-\e_{[n]\setminus
 S})\Big\|_X^q,
$$
while noticing that upon integration with respect to $x$, by
translation invariance, both terms become equal.

Integrating~\eqref{eq:for integration} with respect to $x$, and
using~\eqref{eq:binomial holder}, we see that that the left hand
side of~\eqref{eq:biggoal in lemma} is at most
\begin{multline}\label{eq:done!}
\sum_{i=0}^{n}\sum_{j=0}^{i} \frac{2^{(i+1)(q-1)+q}
(i+1)^{q-1}\left(\frac{(i-j)!j!}{2^ik^i}{n\choose i}{i\choose
j}\right)^{q}}{{n\choose i}} \sum_{\substack{ S \subseteq [n] \\
|S|=i}} \int_{\Z_m^n}\Big\|f(x+\e_{[n]\setminus S})
-f(x)\Big\|_X^qd\mu(x)\\
= 2^{2q-1}\sum_{i=0}^{n}
\frac{(i+1)^{q}}{2^i {\binom n i}}\left(\frac{n!}{k^i(n-i)!}\right)^q \sum_{\substack{ S \subseteq [n] \\
|S|=i}} \int_{\Z_m^n}\Big\|f(x+\e_{[n]\setminus S})
-f(x)\Big\|_X^qd\mu(x).
\end{multline}
Inequality~\eqref{eq:done!} implies the desired
bound~\eqref{eq:biggoal in lemma}, since $(i+1)^q2^{-i}\lesssim_q 1$
and $n!/(n-i)!\le n^i$.
\end{proof}

\begin{proof}[Proof of Lemma~\ref{importantlemma}] It follows
from~\eqref{eq:a identity} and~\eqref{eq:with cardinality} that
\begin{multline}\label{eq:the a identity}
\int_{\{-1,1\}^n}  \int_{\bbZ_{m}^{n}}
\Bigg\|\sum_{l=1}^{n}\e_{j}\left[{\ep f(x+e_{j})-\ep
f(x+e_{j})}\right]\Bigg\|_{X}^{q}d\mu(x)d\tau(\e) \\ =
\int_{\{-1,1\}^n}\int_{\bbZ_{m}^{n}}\left\|{\frac{1}{k(k+1)^{n-1}}\sum_{y\in
x+\mathbb S}\pMmkcor{(y-x)\odot \e}
f(y)}\right\|_{X}^{q}d\mu(x)d\tau(\e).
\end{multline}
An application of identity~\eqref{eq:prop3-1} now shows that
\begin{multline}\label{eq:the b indentity}
\int_{\{-1,1\}^n}\int_{\bbZ_{m}^{n}}\left\|{\frac{1}{k(k+1)^{n-1}}\sum_{y\in
x+\mathbb S}\pMmkcor{(y-x)\odot \e}
f(y)}\right\|_{X}^{q}d\mu(x)d\tau(\e)\\=\int_{\{-1,1\}^n}\int_{\bbZ_{m}^{n}}\Bigg\|\frac{1}{k(k+1)^{n-1}}
\left({\sum_{i=0}^{n}\sum_{j=0}^{i}h_{i,j}\sum_{y\in x+\mathbb{S}}b_{i,j}(y-x,\e)f(y)}\right)
\Bigg\|_{X}^{q}d\mu(x)d\tau(\e).
\end{multline}
Lemma~\ref{importantlemma} now follows from Lemma~\ref{propdiagonal}.
\end{proof}

\section{Lower bounds}
\label{sec:tightness}
In this section we establish lower bounds for the best possible value of $m$ in Theorem~\ref{maintheorem} that is achievable via a smoothing and approximation scheme. Our first result deals with general convolution kernels:

\begin{proposition} \label{prop:lower-bound}
Assume that the probability measures $\nu_1,\ldots,\nu_n,\beta_1,\beta_2$ are a  $(q,A,S)$-smoothing and approximation scheme {on $\bbZ_m^n$}, i.e., conditions~\eqref{eq:A} and~\eqref{eq:S} are satisfied for every Banach space $X$ and every $f:\Z_m^n\to X$. Assume also that $m>cA$ for a large enough universal constant $c>0$. Then
\begin{equation}\label{eq:lower general}
S\gtrsim_q \frac{\sqrt{n}}{A}.
\end{equation}
\end{proposition}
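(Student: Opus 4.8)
The strategy is to test the smoothing and approximation scheme against a carefully chosen $X$ and $f$ for which the left-hand sides of \eqref{eq:A} and \eqref{eq:S} can be computed (or bounded below) explicitly, and the right-hand sides computed (or bounded above) explicitly, so that multiplying the two resulting inequalities forces a lower bound on $AS$. The natural choice is $X=\ell_2$ (or a Gaussian Hilbert space) and $f$ a \emph{linear} map, say $f(x)=\sum_{j=1}^n x_j g_j$ where $g_1,\dots,g_n$ are i.i.d.\ standard Gaussians (equivalently, an appropriately normalized embedding of $\Z_m^n$ into $L_2$ via characters), since for linear $f$ every convolution $f*\nu_j$ is again affine, namely $f*\nu_j(x)=f(x)-\sum_i c_{ij}g_i$ where $c_{ij}=\int y_i\,d\nu_j(y)$ depends only on the first moments of $\nu_j$; this linearizes both the approximation and smoothing quantities. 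In fact it is cleaner to work directly with the \emph{first-moment vectors} $v_j=\bigl(\int y_i\,d\nu_j(y)\bigr)_{i=1}^n\in\mathbb R^n$: the approximation term becomes $\frac1n\sum_j\|v_j\|_2^q$-type quantity and the smoothing term becomes an expression in the $v_j$'s as well, while the edge-integrals on the right are $O(1)$ for this $f$ because adjacent points differ by a bounded amount.

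Concretely, here are the steps. \textbf{(1)} Fix $X$ to be Gaussian $L_2$ and let $f(x)=\sum_{j=1}^n x_j g_j$, so that $f*\nu_j(x)=\sum_{i=1}^n(x_i - v_{j,i})g_i$ where $v_{j,i}=\int y_i\,d\nu_i(y)$, hence $f*\nu_j(x)-f(x)=-\sum_i v_{j,i}g_i$ has $L_2(X)$-norm $\|v_j\|_2$ up to constants, \emph{independent of $x$}. \textbf{(2)} The right-hand side of \eqref{eq:A} is then $A^q$ times a quantity that is $O(1)$ (any two $\ell_\infty$-adjacent points have $\|f(x)-f(y)\|_X\lesssim\sqrt n$ — one must be slightly careful here because $\beta_1$ might concentrate on edges that move many coordinates, giving $\sqrt n$ rather than $O(1)$; this is exactly where the hypothesis $m\gtrsim A$ and a renormalization/rescaling of $f$ by $1/m$, or passing to $f(x)=\sum x_j e^{2\pi i x_j/m}$-type characters, will be needed to keep the edge term controlled). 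So \eqref{eq:A} yields $\frac1n\sum_j\|v_j\|_2^q\lesssim A^q\cdot(\text{edge scale})^q$. \textbf{(3)} For the smoothing side, $f*\nu_j(x+e_j)-f*\nu_j(x-e_j)=2g_j$ exactly (the $v_j$ cancels and only the honest difference $2e_j$ survives), so $\sum_j\e_j(f*\nu_j(x+e_j)-f*\nu_j(x-e_j))=2\sum_j\e_jg_j$, whose $L_2(X)$ average is $\asymp\sqrt n$. This shows that, with \emph{this} test function, the smoothing LHS is $\asymp n^{q/2}$ — which would say $S\gtrsim\sqrt n/(\text{edge scale})$ directly, too strong and hence wrong, meaning the linear test function alone cannot be used for the smoothing inequality as stated. \textbf{(4)} The fix — and the heart of the argument — is to \emph{combine} the two properties: apply \eqref{eq:A} and \eqref{eq:S} to a \emph{common} test function but exploit that the quantity $\sum_j\e_j(f*\nu_j(x+e_j)-f*\nu_j(x-e_j))$ differs from $\sum_j\e_j(f(x+e_j)-f(x-e_j))$ by the smoothed-out correction $\sum_j\e_j(\Phi_j(x+e_j)-\Phi_j(x-e_j))$ where $\Phi_j=f*\nu_j-f$; bound this correction by the approximation property, and bound $\sum_j\e_j(f(x+e_j)-f(x-e_j))$ trivially. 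Choosing $f$ to be a random $\pm1$ (Rademacher chaos) or Gaussian function on $\Z_m^n$ valued in $\ell_2^{\,2^n}$-ish, or more simply a generic function, one arranges that the "honest" local differences are genuinely $\asymp\sqrt n$ on average while the edge term on the right is $\asymp 1$; then $S\cdot(\text{something})+A\cdot(\text{something})\gtrsim\sqrt n$ gives $AS\gtrsim\sqrt n$ after optimizing, hence \eqref{eq:lower general}.

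\textbf{Main obstacle.} The delicate point is step (4): disentangling the smoothed Rademacher sum from the unsmoothed one while keeping all right-hand-side edge integrals $O(1)$, and doing so for a single cleverly chosen $f$ (and space $X$) that simultaneously makes the smoothing LHS large \emph{and} is not killed by the approximation term. I expect one wants $f$ to resemble a "frequency $1$" function on the torus — e.g.\ $f(x)=\bigl(\omega^{x_1},\dots,\omega^{x_n}\bigr)$ with $\omega=e^{2\pi i/m}$ valued in $\mathbb C^n=\ell_2^n$ — so that $f(x+e_j)-f(x-e_j)=(\omega-\omega^{-1})e^{2\pi i x_j/m}e_j$ has norm $\asymp 1/m$ per coordinate, making $\sum_j\e_j(\cdots)$ of norm $\asymp \sqrt n/m$, while consecutive-point differences in any $\ell_\infty$ edge are also $\asymp\sqrt n/m$ (bounded number of coordinates move, or at most $n$ of them but each by $\asymp 1/m$, summing in $\ell_2$ to $\asymp\sqrt n/m$ — here the $m\gtrsim A$ hypothesis and the normalization conspire to make the approximation term genuinely negligible compared to the smoothing term unless $AS\gtrsim\sqrt n$). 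Verifying that the convolutions $f*\nu_j$ of these characters behave as claimed — which reduces to estimating the Fourier coefficient $\widehat{\nu_j}(e_j)=\int\omega^{y_j}d\nu_j(y)$ and showing it is bounded away from $0$, or handling the case when it is not — is the technical crux, and is presumably where the universal constant $c$ in the hypothesis $m>cA$ gets used.
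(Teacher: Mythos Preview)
Your proposal contains a genuine gap: none of the test functions you propose actually yields the bound $S\gtrsim_q\sqrt n/A$.

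With the character map $f(x)=(\omega^{x_1},\dots,\omega^{x_n})\in\ell_2^n$, your own computation gives that the $j$-th coordinate of $f*\nu_j(x+e_j)-f*\nu_j(x-e_j)$ is $(\omega-\omega^{-1})\widehat{\nu_j}(e_j)\omega^{x_j}$ and all others vanish, so the smoothed Rademacher sum has $\ell_2$-norm $|\omega-\omega^{-1}|\bigl(\sum_j|\widehat{\nu_j}(e_j)|^2\bigr)^{1/2}$. But $|\widehat{\nu_j}(e_j)|\le 1$ always, so this is at most $|\omega-\omega^{-1}|\sqrt n$. Meanwhile the edge term on the right of \eqref{eq:S} can be as large as $|\omega-1|\sqrt n$ (when $\beta_2$ sits on full $\{-1,1\}^n$ edges). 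The two scales match and you only recover $S\gtrsim 1$. Your step (4) decomposition does not help: writing smoothed $=$ unsmoothed $+$ correction and using \eqref{eq:A} to bound the correction just tells you the smoothed sum is close to the unsmoothed one, which again has norm $\asymp\sqrt n/m$, matching the edge scale. The same obstruction kills the Gaussian-linear test function. The hypothesis $m>cA$ alone cannot rescue this, because the problem is not the size of the approximation error but that the $\ell_2$ edge scale is intrinsically $\sqrt n$ times the single-coordinate scale.

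The paper's proof avoids this obstruction by using \emph{two different} test functions in \emph{two different} Banach spaces, and by using \eqref{eq:A} \emph{indirectly}. First, \eqref{eq:A} is applied with $X=\ell_\infty^n$ and a truncated sawtooth function to prove that each $\nu_j$ is concentrated: $\frac1n\sum_j\nu_j(\{y:|y_j|>s\})\lesssim A/s$. This concentration, together with $m>cA$, forces the averaged total variation of the marginal's discrete derivative to be large, $\frac1n\sum_j\sum_{z\in\Z_m}|P_j(\nu_j)(z+1)-P_j(\nu_j)(z-1)|\gtrsim 1/A$. Second, \eqref{eq:S} is applied with $X=L_1(\Z_m^n)$ and $f(x)=m^n\delta_{\{x\}}$; the crucial point is that in $L_1$ one has $\|f(x)-f(y)\|_{L_1}\le 2$ for \emph{all} $x,y$, so the right side of \eqref{eq:S} is $\lesssim S^q$ regardless of where $\beta_2$ sits. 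On the left, $\|f*\nu_j(x+e_j)-f*\nu_j(x-e_j)\|_{L_1}=\sum_z|\nu_j(z-e_j)-\nu_j(z+e_j)|$ is exactly a total variation, bounded below via the marginal by the quantity $\gtrsim 1/A$ above; Kahane's inequality and cotype $2$ of $L_1$ then give the left side $\gtrsim_q(n/A^2)^{q/2}$, hence $S\gtrsim_q\sqrt n/A$. The decoupling of edge scale from dimension is achieved by working in $L_1$ with delta functions, not in a Hilbert space; this is the idea your proposal is missing.
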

Recall, as explained in Section~\ref{sec:scheme}, that in order for a smoothing and approximation scheme to yield the metric cotype inequality~\eqref{eq:in thm}, we require $S\lesssim 1$, in which case the bound on $m$ becomes $m\gtrsim An^{1/q}$. Proposition~\ref{prop:lower-bound} shows that $S\lesssim 1$ forces the bound $A\gtrsim_q \sqrt{n}$, and correspondingly $m\gtrsim_q n^{\frac12 +\frac{1}{q}}$.

For the particular smoothing and approximation scheme used in our proof of Theorem~\ref{maintheorem}, the following proposition establishes asymptotically sharp bounds.

\begin{proposition}\label{prop:cubic-tight}
Fix an odd integer $k\le m/2$ and consider the averaging operators $\{\ep\}_{j=1}^n$ used in our proof of Theorem~\ref{maintheorem}, i.e., they are defined as in~\eqref{eq:def:Upsilon}. If there exist probability measures $\beta_1,\beta_2$ on $E_\infty(\Z_m^n)$ for which the associated approximation and smoothing inequalities~\eqref{eq:A} and~\eqref{eq:S} are  satisfied for every Banach space $X$ and every $f:\Z_m^n\to X$, then
\begin{equation}\label{eq:lower for ours}
A\gtrsim k\quad \mathrm{and}\quad S\gtrsim \min\left\{\sqrt{\frac{n}{k}},\frac{n}{k}\right\}.
\end{equation}
\end{proposition}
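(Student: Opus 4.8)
\textbf{Proof proposal for Proposition~\ref{prop:cubic-tight}.}

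The plan is to test both inequalities~\eqref{eq:A} and~\eqref{eq:S} on cleverly chosen functions $f$ — a one-dimensional coordinate function for the approximation lower bound $A\gtrsim k$, and a bilinear/Rademacher-type ``worst case'' function for the smoothing lower bound — and then read off the claimed estimates. Since the operators $\ep$ here are the specific convolution operators from~\eqref{eq:def:Upsilon} (averaging over $S(j,k)$), we have complete control over $\ep f$ for explicit $f$, so the computations are concrete rather than abstract.

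For the bound $A\gtrsim k$: take $X=\mathbb R$ and $f(x)=x_1$ (interpreting $x_1\in\{-m/2,\dots,m/2\}$ as a real number, say restricted so no wraparound artifact appears, or more safely $f(x)=\cos(2\pi x_1/m)$ or a sawtooth that is genuinely defined on $\Z_m$). The right-hand side of~\eqref{eq:A} is then $O(1)$ since $f$ changes by $O(1)$ across any $\ell_\infty$ edge. On the left-hand side, for $j=1$ the function $\Upsilon_1^{(k)}f(x)$ averages $f$ over a shift in coordinate $1$ ranging over even values in $[-k,k]$, so $\Upsilon_1^{(k)}f(x)-f(x)$ is a fixed nonzero quantity of order... actually for a linear/affine $f$ in coordinate $1$ this average difference vanishes, so one must instead use $j\neq 1$: for $j=2$, say, $\Upsilon_2^{(k)}f$ averages coordinate $1$ over \emph{odd} values in $[-k,k]$, which for an affine function still averages to $0$; the real point is to use a function like $f(x)=x_1^2$ (or a periodized version) — then averaging coordinate $1$ over $\{-k+1,\dots,k-1\}$ (odd, for $j\neq 1$) shifts the value by $\asymp k^2$, forcing the left side of~\eqref{eq:A} to be $\gtrsim k^2$ while the right side is $\asymp k^2$ only if we choose $f$ Lipschitz... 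This is the delicate point: one needs $f$ whose local oscillation (right side of~\eqref{eq:A}) is $O(1)$ per edge but whose average over a width-$k$ window moves by $\gtrsim k$. A telescoping/sawtooth function with slope $1$ does exactly this: local differences are $1$, but the averaging operator for a coordinate $\ell\neq j$ is a genuine smoothing in a direction of size $\asymp k$, and on a generic ``rough'' lift of such a function one gets cancellation of size $\asymp k$. I would build $f$ as a random $\pm1$-slope walk along one coordinate (a Rademacher-type construction), so that averaging over a width-$2k$ window produces a fluctuation of order $\sqrt{k}$ — wait, that only gives $A\gtrsim \sqrt k$. To get the full $A\gtrsim k$ one should instead note that the approximation inequality is being used in the scheme precisely at scale $m/k$ telescoping steps, and the sharp example is $f$ supported on a single coordinate line behaving like the extremal function for the discrete Poincaré inequality of a path of length $k$; I would make this precise by choosing $f(x)=\phi(x_1)$ with $\phi$ the first eigenfunction (or a sawtooth), giving $\|\Upsilon f - f\|\asymp k\cdot(\text{edge oscillation})$.

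For the bound $S\gtrsim\min\{\sqrt{n/k},n/k\}$: this is the main obstacle and the heart of the proposition. Here I would use the exact identity~\eqref{eq:with cardinality}, which expresses $\sum_j \e_j[\ep f(x+e_j)-\ep f(x-e_j)]$ as $\frac{1}{k(k+1)^{n-1}}\sum_{y\in x+\mathbb S}\pMmkcor{(y-x)\odot\e}f(y)$. Choosing $f$ to be (a vector-valued function realizing) the worst case for the Rademacher sum $\sum_y \pMmkcor{y\odot\e}\,e_y$ in the relevant Banach space — e.g. $X=\ell_q$ and $f=$ indicator-type functions so that the left side of~\eqref{eq:S} is a genuine $\ell_q$-norm of the coefficient vector $y\mapsto\pMmkcor{y\odot\e}$ over $\mathbb S$ — lets me compute the left side of~\eqref{eq:S} explicitly: it is governed by $\E_\e\big[\sum_{y\in\mathbb S}|\pMmkcor{y\odot\e}|^q\big]^{1/q}$ up to the normalization $1/(k(k+1)^{n-1})$. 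Since $\pMmkcor{y\odot\e}=\sum_{t:y_t=k}\e_t-\sum_{t:y_t=-k}\e_t$ is a sum of $\pmkcor{y}$ independent signs, its typical size is $\sqrt{\pmkcor y}$, and $\pmkcor y$ for a random $y\in\mathbb S$ concentrates around $n/k$ (each coordinate of $y$ is an odd number in $[-k,k]$, hence equals $\pm k$ with probability $\asymp 1/k$). This produces the $\sqrt{n/k}$ term; the $n/k$ term comes from the regime where $n/k$ is small (so $\pmkcor y\in\{0,1\}$ typically and the relevant contribution is from the rare $y$ with one coordinate equal to $\pm k$), where $\sqrt{\pmkcor y}$ and $\pmkcor y$ coincide in order but the min is genuinely $n/k<1$. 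Meanwhile the right side of~\eqref{eq:S} is $S^q$ times an edge sum which, for this $f$, is of constant order (after normalizing $f$ so that $\|f(x)-f(y)\|\lesssim 1$ on $\ell_\infty$-edges). Carefully matching the normalization $\frac{1}{k(k+1)^{n-1}}\cdot(\text{size of }\mathbb S)^{1/q}\cdot\sqrt{n/k}$ against the edge-sum normalization, and tracking that $|\mathbb S|=(k+1)^n$... wait, $|\mathbb S|$ is the number of $y\in[-k,k]^n$ with all coordinates odd, which is $k^n$ — I would recompute these cardinalities precisely and match exponents; this bookkeeping of normalization constants is exactly where the exponent $\tfrac12$ versus $1$ in the $\min$ is decided, and it is the step I expect to require the most care. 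The upper bound matching these (i.e. that the scheme actually achieves $S\lesssim\min\{\sqrt{n/k},n/k\}$) is already contained in Lemma~\ref{importantlemma} together with the analysis in Section~\ref{sec:together}, so only the lower bound needs proof here.
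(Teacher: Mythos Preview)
Your overall instinct—test~\eqref{eq:A} and~\eqref{eq:S} on explicit $f$ and read off the bounds—is exactly what the paper does, and for the $S$ part you even land on the same identity~\eqref{eq:with cardinality} and the same random-sign interpretation of $\pMmkcor{y\odot\e}$. But there is one genuine gap and one place where you never quite close the argument.

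\medskip
\noindent\textbf{The gap: your choice $X=\ell_q$ for the smoothing bound does not work.} With $f(y)=e_y\in\ell_q$, the left-hand side of~\eqref{eq:S} becomes, after taking the $q$-th root,
\[
\frac{1}{k(k+1)^{n-1}}\Bigl(\sum_{y\in\mathbb S}\bigl|\pMmkcor{y\odot\e}\bigr|^q\Bigr)^{1/q}.
\]
Since $|\mathbb S|=(k+1)^n$ (not $k^n$—each coordinate has $(k+1)$ odd values in $[-k,k]$), the bracketed sum is of order $(k+1)^n\,\E[|Z|^q]$, so the whole expression is of order
\[
\frac{(k+1)^{n/q}}{k(k+1)^{n-1}}\cdot\|Z\|_q,
\]
and for any $q>1$ the prefactor $(k+1)^{n/q-n+1}$ is \emph{exponentially small} in $n$. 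You therefore get no useful lower bound on $S$. The paper's remedy is to take $X=L_1(\Z_m^n,\mu)$ and $f(x)=m^n\delta_{\{x\}}$; then $\bigl\|\sum_y c_y f(y)\bigr\|_{L_1}=\sum_y|c_y|$ is an $\ell_1$ (not $\ell_q$) sum, the huge normalizations cancel exactly, and one obtains
\[
S\ \gtrsim\ \frac{1}{k(k+1)^{n-1}}\sum_{y\in\mathbb S}\bigl|\pMmkcor{y}\bigr|\ =\ \E|Z|,
\]
with $Z=\sum_{i=1}^n\xi_i$ and $\xi_i$ i.i.d.\ taking values $\pm1$ each with probability $\tfrac{1}{k+1}$ and $0$ otherwise. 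The $\min\{\sqrt{n/k},\,n/k\}$ then drops out of the elementary moment inequality $\E|Z|\ge\|Z\|_2^3/\|Z\|_4^2$. Your probabilistic picture of $Z$ is correct; only the target space is wrong.

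\medskip
\noindent\textbf{The incomplete part: $A\gtrsim k$.} Your stream of attempts eventually reaches the right object—a $1$-Lipschitz sawtooth $\phi$ in one coordinate with period $\asymp k$—but you never actually run the computation. It does work: for $f(x)=\phi(x_1)$ one has $\|f(x)-f(y)\|\le1$ on every $\ell_\infty$ edge (so the right side of~\eqref{eq:A} is $\le A^q$ regardless of $\beta_1$), while for every $j$ the operator $\ep$ averages coordinate $1$ over a window of width $\asymp k$, giving $\|\ep f-f\|_{L_q}^q\gtrsim k^q$ for each $j$. The paper does essentially this, packaged through the already-proved tail estimate of Lemma~\ref{prop:markov-like}: plugging $s\asymp k/2$ into~\eqref{eq:tail} and noting that $\nu_j$ puts a constant fraction of its mass on $\{|x_j|>s\}$ forces $A/k\gtrsim1$.
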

Proposition~\ref{prop:cubic-tight} shows that in order to have $S\lesssim 1$ we need to require $k\gtrsim n$, in which case $A\gtrsim n$, and correspondingly $m\gtrsim n^{1+\frac{1}{q}}$, matching the bound obtained in Theorem~\ref{maintheorem}.

\subsection{A lower bound for general convolution kernels: Proof of Proposition~\ref{prop:lower-bound}}\label{sec:lower general}

Assume that the probability measures $\nu_1,\ldots,\nu_n,\beta_1,\beta_2$ are a  $(q,A,S)$-smoothing and approximation scheme, i.e., they satisfy~\eqref{eq:A} and~\eqref{eq:S}. It will be convenient to think of these measures as functions defined on the appropriate (finite) spaces, i.e., $\nu_1,\ldots,\nu_n:\Z_m^n \to [0,1]$ and $\beta_1,\beta_2:E_\infty(\Z_m^n)\to [0,1]$.

For a probability measure $\nu$ on $\bbZ_m^n$, let $P_j(\nu)$ be the
probability measure on  $\bbZ_m$ which is the marginal of $\nu$ on
the $j$th coordinate, i.e.,
\[ P_j(\nu)(r)\stackrel{\mathrm{def}}{=}\sum_{\substack{x\in \bbZ_m^n\\  x_j=r}}\nu(x).\]

Define the absolute value of $x\in\bbZ_m$ to be $|x|=\min\{x,m-x\}$.

\begin{lemma} \label{prop:markov-like}
Assume that $\nu_1,\ldots,\nu_n,\beta_1$ satisfy~\eqref{eq:A}.
Then for every $s\in \N$ we have:
\begin{equation}\label{eq:tail}
 \frac{1}{n}\sum_{j=1}^n\sum_{\substack{x\in \Z_m^n\\ |x_j|>s}}\nu_j(x)  \lesssim \frac{A}{s}.
 \end{equation}
\end{lemma}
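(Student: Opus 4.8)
The plan is to test the approximation inequality~\eqref{eq:A} against one cleverly chosen function $f$ that is bounded and Lipschitz in the $\ell_\infty$ sense, so that its right‑hand side is controlled no matter what $\beta_1$ is, while its left‑hand side is forced to see the tail mass
\[
T_j\stackrel{\mathrm{def}}{=}\sum_{\substack{x\in\Z_m^n\\ |x_j|>s}}\nu_j(x)\qquad(j\in[n]).
\]
The observation driving everything is that, since~\eqref{eq:A} must hold for every Banach space $X$ and every $f$, and since $\beta_1$ is a probability measure supported on $E_\infty(\Z_m^n)$, any $f$ with $\|f(x)-f(y)\|_X\le\Lambda$ whenever $x-y\in\{-1,0,1\}^n$ makes the right‑hand side of~\eqref{eq:A} at most $(A\Lambda)^q$. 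Hence it suffices to produce such an $f$ with $\Lambda\lesssim 1/s$ and with $\frac1n\sum_{j=1}^n\int_{\Z_m^n}\|f*\nu_j-f\|_X^q\,d\mu\gtrsim_q\frac1n\sum_{j=1}^nT_j^q$; a single application of Jensen's inequality (convexity of $t\mapsto t^q$) then upgrades the resulting bound $\frac1n\sum_jT_j^q\lesssim_q (A/s)^q$ into the asserted $\frac1n\sum_jT_j\lesssim A/s$.

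I would assume $s<m/2$ (otherwise the left side of~\eqref{eq:tail} vanishes), work in the complex Banach space $X=(\mathbb C^n,\|\cdot\|_\infty)$, put $\omega=e^{2\pi i/m}$ and $L=\lceil m/s\rceil$, and take $f(x)=(\omega^{\ell_1x_1},\dots,\omega^{\ell_nx_n})$, where the frequency $\ell_j\in\{1,\dots,L\}$ will be chosen separately for each coordinate. The key elementary estimate is the Dirichlet‑kernel bound
\[
\sum_{\ell=1}^L\sin^2\!\Bigl(\tfrac{\pi\ell t}{m}\Bigr)=\frac L2-\frac12\operatorname{Re}\sum_{\ell=1}^L e^{2\pi i\ell t/m}\ \ge\ \frac L2-\frac1{2\,|\sin(\pi t/m)|}\ \ge\ \frac L4 ,
\]
valid for every $t\in\Z_m$ with $|t|>s$, where the last step uses $|\sin(\pi t/m)|\ge\sin(\pi s/m)\ge\frac{2s}{m}\ge\frac2L$ (Jordan's inequality, legitimate because $|t|\le m/2$). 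Averaging this over $\ell\in\{1,\dots,L\}$, integrating against $\nu_j$, and using $\sin^2\ge0$ off the tail shows $\max_{1\le\ell\le L}\sum_{y\in\Z_m^n}\sin^2(\pi\ell y_j/m)\,\nu_j(y)\ge T_j/4$; pick $\ell_j$ attaining at least this value. Then $\operatorname{Re}\bigl(1-\sum_y\omega^{-\ell_j y_j}\nu_j(y)\bigr)=2\sum_y\sin^2(\pi\ell_j y_j/m)\,\nu_j(y)\ge T_j/2$, and since the $j$‑th coordinate of $f*\nu_j(x)-f(x)$ equals $\omega^{\ell_j x_j}\bigl(\sum_y\omega^{-\ell_j y_j}\nu_j(y)-1\bigr)$, it has modulus $\ge T_j/2$ for \emph{every} $x$; hence $\|f*\nu_j(x)-f(x)\|_\infty\ge T_j/2$ pointwise and the left‑hand side of~\eqref{eq:A} is at least $2^{-q}\cdot\frac1n\sum_jT_j^q$.

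On the other hand $f$ is $\tfrac{2\pi L}{m}$‑Lipschitz in the required sense: if $x-y\in\{-1,0,1\}^n$ then $|\omega^{\ell_j(x_j-y_j)}-1|\le|\omega^{\ell_j}-1|=2|\sin(\pi\ell_j/m)|\le\tfrac{2\pi L}{m}$ coordinatewise. Feeding the two estimates into~\eqref{eq:A} and using $L\le 2m/s$ gives $\frac1n\sum_jT_j^q\le(4\pi AL/m)^q\le(8\pi A/s)^q$, and Jensen finishes the argument. The one point that requires genuine care is precisely the choice of test function: a single slow Fourier mode $\omega^{x_j}$ is too coarse to resolve mass at scale $s$, while a single fast mode at frequency $\approx m/s$ can be annihilated by cancellation when $\nu_j$ happens to live where $\sin^2$ is tiny; using the whole frequency band $\{1,\dots,L\}$ together with the Dirichlet‑kernel lower bound and a per‑coordinate choice of best frequency circumvents exactly this. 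Everything else — Jordan's inequality, the geometric‑series estimate for the Dirichlet kernel, and the single use of Jensen — is routine.
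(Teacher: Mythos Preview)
Your argument is correct and is genuinely different from the paper's. The paper tests~\eqref{eq:A} with $X=\ell_\infty^n$ and $f_s(x)=(g_s(x_1),\dots,g_s(x_n))$, where $g_s:\mathbb R\to\mathbb R$ is a $1$-Lipschitz ``truncated jigsaw'' with period $12s$; the key point there is a positivity trick: on a positive-density set of $x$ one has $g_s(x_j-y_j)-g_s(x_j)\ge 0$ for all $y_j$ and $\ge s$ whenever $3s\le|y_j|\le 4s$. This yields, after averaging over $j$, an annular bound $\tfrac1n\sum_j\nu_j\{3s\le|y_j|\le4s\}\lesssim A/s$, which the paper then sums over dyadic scales $s\mapsto\lfloor(4/3)^\ell s\rfloor$ to obtain the full tail.

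Your Fourier-analytic route---choose frequencies $\ell_j\in\{1,\dots,\lceil m/s\rceil\}$ per coordinate via a Dirichlet-kernel lower bound, and read off $|1-\widehat{P_j\nu_j}(\ell_j)|\gtrsim T_j$ directly---is more streamlined in that it avoids the dyadic decomposition and in fact first proves the $\ell_q$ bound $\tfrac1n\sum_jT_j^q\le(8\pi A/s)^q$ before Jensen, which is formally a hair stronger. The paper's approach, by contrast, is more elementary (no complex numbers, no Dirichlet kernel) and perhaps more robust if one wanted to adapt the scheme beyond the torus. Two small cosmetic points: the implied constant you obtain is absolute ($8\pi$), so your $\lesssim_q$'s can be plain $\lesssim$'s; and it may be worth saying explicitly that $(\mathbb C^n,\|\cdot\|_\infty)$ is being regarded as a real Banach space, since~\eqref{eq:A} is stated for Banach spaces without further qualification.
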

\begin{proof}
We shall apply~\eqref{eq:A} with $X= \ell_\infty^n$.
Let $g_s:\mathbb R \to \mathbb R$ be the truncated jigsaw function with period $12s$, depicted in
Figure~\ref{fig:trunc-jigsaw}.
\begin{figure}[ht]
\begin{center}
  \includegraphics[scale=0.8]{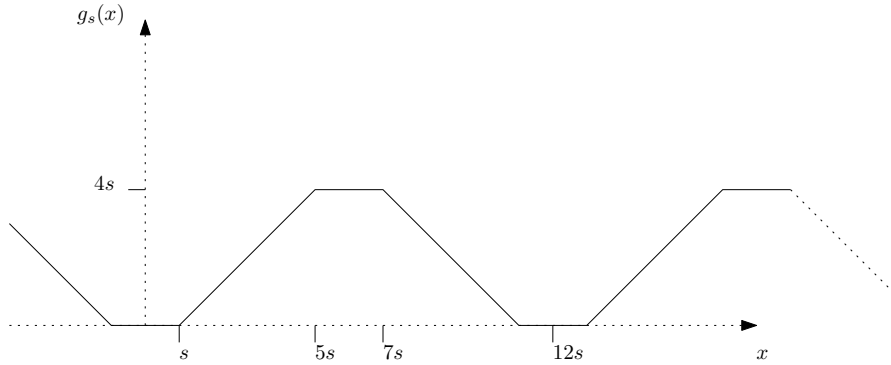}
\end{center}
\caption{$g_s$ is truncated jigsaw function.}
\label{fig:trunc-jigsaw}
\end{figure}

Define  $f_s:\bbZ_m^n \to X$ by $$f_s(x)\stackrel{\mathrm{def}}{=}(g_s(x_1),g_s(x_2),\ldots ,g_{s}(x_n)).$$
The Lipschitz constant of $f_s$ with respect to the $\ell_\infty$ metric on $\Z_m^n$ is $1$, and therefore it follows from~\eqref{eq:A} that
\begin{equation}\label{eq:Lip infty}
\left(\frac{1}{n}\sum_{j=1}^n\int_{\Z_m^n}\left\|f_s*\nu_j-f_s\right\|_{\ell_\infty^n}d\mu\right)^q
\le\frac{1}{n}\sum_{j=1}^n\int_{\Z_m^n}\left\|f_s*\nu_j-f_s\right\|_{\ell_\infty^n}^qd\mu\lesssim A^q.
\end{equation}

 For every $x\in \bbZ_m^n$ and $j\in [n]$,
\begin{multline*} (f_s*\nu_j -f_s)(x) =  \sum_{y\in \bbZ_m^n} \nu_j(y) (f_s(x-y)-f_s(x))\\ = \sum_{y\in \bbZ_m^n} \nu_j(y)
\Bigl( g_{s}(x_1-y_1)- g_s(x_1),\ldots, g_s(x_n-y_n)-g_s(x_n) \Bigr).  \end{multline*}
Assume that
\begin{equation}\label{eq:modular}
(x_j \bmod{12s}) \in [0,s] \cup [12s-s,12s-1].
\end{equation}
When $3s\le |y_j| \le 4s$, we have
\( g_s(x_j-y_j)-g_s(x_j)\ge s \), and for every $y_j\in \bbZ_m$, we have
\( g_s(x_j-y_j)-g_s(x_j)\ge 0 \).
Hence,
\begin{multline} \label{eq:bound1}
 \|(f_s*\nu_j -f_s)(x) \|_{\ell_\infty^n} \ge \sum_{y\in \bbZ_m^n} \nu_j(y) \big(g_s(x_j-y_j)-g_s(x_j)\big)
\\ \ge s  {P_j(\nu_j)\Big(\left\{z\in \bbZ_m:\; 3s\le |z|\le 4s\right\}\Big)} .
\end{multline}
Note that~\eqref{eq:modular} holds for a constant fraction of $x\in \Z_m^n$, and hence by integrating~\eqref{eq:bound1} over $\Z_m^n$ we obtain:
\begin{equation}\label{eq:range} \sum_{\substack{y\in \Z_m^n\\3s\le |y_j|\le 4s}} \nu_j(y)\lesssim \frac{1}{s}\int_{\bbZ_m^n} \|(f_s*\nu_j -f_s)(x) \|_{\ell_\infty^n} d\mu(x)     .\end{equation}
Averaging~\eqref{eq:range} over $j\in \{1,\ldots,n\}$ and using~\eqref{eq:Lip infty} we get,
\begin{equation}\label{eq:averaged 66}
\frac{1}{n}\sum_{j=1}^n  \sum_{\substack{y\in \Z_m^n\\3s\le |y_j|\le 4s}} \nu_j(y)\stackrel{\eqref{eq:range}\wedge\eqref{eq:Lip infty}}{\lesssim}\frac{A}{s}.
\end{equation}
Therefore
  \begin{equation*}\label{for j average}
  \frac{1}{n}\sum_{j=1}^n\sum_{\substack{y\in \Z_m^n\\ |y_j|\ge 3s}}\nu_j(y)= \sum_{\ell=0}^{\infty}\frac{1}{n}\sum_{j=1}^n\sum_{3\left\lfloor\left(\frac 4 3 \right)^{\ell}s\right\rfloor \le |y_j| \le 4\left\lfloor\left(\frac 4 3 \right)^{\ell}s\right\rfloor}\nu_j(y) \stackrel{\eqref{eq:averaged 66}}{\lesssim} \sum_{\ell=0}^{\infty}\frac{A}{s\cdot(4/3)^\ell}\lesssim \frac{A}{s}.\qedhere
\end{equation*}
\end{proof}

\begin{corollary}
Assume that $m>cA$ for a large enough universal constant $c\in \N$. Then:
\begin{equation}\label{eq:1/A}
\frac{1}{n}\sum_{j=1}^n\sum_{z \in \bbZ_m} \left |{P_j(\nu_j)(z+1)-P_j(\nu_j)(z-1)}\right | \gtrsim \frac{1}{A}.
\end{equation}
\end{corollary}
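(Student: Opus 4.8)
The plan is to pass to the one-dimensional marginals $p_j\stackrel{\mathrm{def}}{=}P_j(\nu_j)$, which are probability measures on $\bbZ_m$, and to read the left-hand side of~\eqref{eq:1/A} as an average of $\ell_1$-distances between $p_j$ and its translate by $2$: reindexing the inner sum gives $\sum_{z\in\bbZ_m}|p_j(z+1)-p_j(z-1)|=\sum_{z\in\bbZ_m}|p_j(z+2)-p_j(z)|$. The first ingredient is the concentration of these marginals supplied by Lemma~\ref{prop:markov-like}: since $\frac1n\sum_{j=1}^n p_j\bigl(\{z\in\bbZ_m:\ |z|>s\}\bigr)\lesssim A/s$ for every $s\in\N$, we may fix an integer $L\asymp A$ large enough that $\frac1n\sum_{j=1}^n\eta_j\le\tfrac14$, where $\eta_j\stackrel{\mathrm{def}}{=}p_j\bigl(\bbZ_m\setminus[-L,L]\bigr)$ and $[-L,L]$ is regarded as an arc of $\bbZ_m$; since $L\asymp A$ and $m>cA$, this arc has length $2L+1<m$ once $c$ is a large enough absolute constant.

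The heart of the matter is a pointwise-in-$j$ two-sided estimate. Fix once and for all an \emph{even} integer $r$ for which the arcs $[-L,L]$ and $[-L,L]+r$ are disjoint in $\bbZ_m$; one may take any even $r$ in the interval $(2L,3L)$, which has length $\asymp A\gtrsim 1$ and hence contains an even integer, and the disjointness condition $2L<r<m-2L$ then holds as soon as $m>5L$, i.e.\ $m>cA$ for $c$ large. On the one hand, telescoping
\[
p(z+r)-p(z)=\sum_{i=0}^{r/2-1}\bigl(p(z+2i+2)-p(z+2i)\bigr)
\]
and summing absolute values over $z\in\bbZ_m$ yields $\sum_{z}|p(z+r)-p(z)|\le \tfrac r2\sum_{z}|p(z+2)-p(z)|$ for any probability measure $p$ on $\bbZ_m$. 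On the other hand, since $[-L,L]+r\subseteq\bbZ_m\setminus[-L,L]$, writing $\eta=p(\bbZ_m\setminus[-L,L])$ gives
\[
\sum_{z\in\bbZ_m}|p(z+r)-p(z)|\ \ge\ \sum_{z\in[-L,L]}\bigl(p(z)-p(z+r)\bigr)\ \ge\ (1-\eta)-\eta\ =\ 1-2\eta .
\]
Comparing the two bounds shows $\sum_{z\in\bbZ_m}|p(z+2)-p(z)|\ge\tfrac{2(1-2\eta)}{r}$, an inequality valid for \emph{every} probability measure $p$ (it is trivial when $\eta\ge\tfrac12$).

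It then remains to apply this with $p=p_j$, $\eta=\eta_j$ and the fixed $r\asymp A$, and to average over $j\in[n]$; the coordinates $j$ with $\eta_j\ge\tfrac12$ cost nothing precisely because the estimate is trivial for them. Using $\frac1n\sum_j\eta_j\le\tfrac14$,
\[
\frac1n\sum_{j=1}^n\sum_{z\in\bbZ_m}|p_j(z+1)-p_j(z-1)|=\frac1n\sum_{j=1}^n\sum_{z\in\bbZ_m}|p_j(z+2)-p_j(z)|\ \ge\ \frac2r\Bigl(1-\frac2n\sum_{j=1}^n\eta_j\Bigr)\ \ge\ \frac1r\ \gtrsim\ \frac1A ,
\]
which is exactly~\eqref{eq:1/A}. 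The only genuinely delicate point is the quantitative calibration: the telescoping loses a factor $r/2$, so one needs the concentration scale $L$ and the separation shift $r$ to be \emph{simultaneously} of order $A$; this is possible only because Lemma~\ref{prop:markov-like} provides a tail bound of the sharp shape $A/s$, and because the hypothesis $m>cA$ is exactly what leaves room inside $\bbZ_m$ for the arc $[-L,L]$ and one of its short translates to be disjoint.
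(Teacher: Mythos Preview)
Your proof is correct and follows essentially the same route as the paper's: use Lemma~\ref{prop:markov-like} to concentrate the marginals $P_j(\nu_j)$ in an arc of length $\asymp A$, shift by an even amount $r\asymp A$ so that the translated arc carries at most a quarter of the (averaged) mass, telescope the shift by $r$ into $r/2$ increments of size $2$, and read off the lower bound $\gtrsim 1/A$. The paper works directly with the $j$-averaged quantities and shifts by $2cA+2$, whereas you first establish the pointwise-in-$j$ inequality $\sum_z|p_j(z+2)-p_j(z)|\ge\tfrac{2(1-2\eta_j)}{r}$ and then average; this is a cosmetic difference, and your handling of the case $\eta_j\ge\tfrac12$ (where the pointwise bound is vacuous but harmless) is a nice touch.
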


\begin{proof}
We may assume that $A$ is an integer. By Lemma~\ref{prop:markov-like}, for $c$ large enough we have
\begin{equation*}
\frac{1}{n}\sum_{j=1}^n\sum_{|z|\le cA} P_j(\nu_j)(z) \ge \frac34\quad\mathrm{and}\quad \frac{1}{n}\sum_{j=1}^n\sum_{z=cA+2}^{3cA+2} P_j(\nu_j)(z)\le \frac14.
\end{equation*}
Therefore,
\begin{eqnarray*}
\frac12
& \le & \frac{1}{n}\sum_{j=1}^n\sum_{|z| \le cA}P_j(\nu_j)(z)-\frac{1}{n}\sum_{j=1}^n\sum_{|z-2cA-2| \le cA}P_j(\nu_j)(z)
\\
& = & \frac{1}{n}\sum_{j=1}^n\sum_{|z| \le cA}\left[{P_j(\nu_j)(z)-P_j(\nu_j)(z+2cA+2)}\right]
\\
& = &  \frac{1}{n}\sum_{j=1}^n\sum_{|z|\le cA}\sum_{t=1}^{cA+1}\left[{P_j(\nu_j)(z+2(t-1))-P_j(\nu_j)(z+2t)}\right]
\\
& \lesssim & \frac{A}{n}\sum_{j=1}^n\sum_{z \in \bbZ_m} \left |{P_j(\nu_j)(z+1)-P_j(\nu_j)(z-1)}\right |,
\end{eqnarray*}
as required.
\end{proof}

\begin{proof}[Proof of Proposition~\ref{prop:lower-bound}] We shall apply the smoothing inequality~\eqref{eq:S} when $X=L_1(\bbZ_m^n,\mu)$
and $f:\bbZ_m^n \to X$ is defined as $f(x)=m^n \cdot \delta_{\{x\}}$, i.e., for $x\in \Z_m^n$ the function $f(x):\bbZ_m^n \to \mathbb R$ is
\begin{equation}\label{eq:def f} f(x)(y)\stackrel{\mathrm{def}}{=} \begin{cases} m^n\ & x=y,\\ 0 & \text{otherwise}. \end{cases} \end{equation}

For every $\e\in \{-1,1\}^n$ and $x\in \Z_m^n$ we have:
\begin{multline}\label{eq:delta identity}
\sum_{j=1}^n \e_j\left(  f*\nu_j(x+e_j)- f*\nu_j(x-e_j) \right )\\ =
\sum_{j=1}^n \e_j\left( \sum_{y \in \bbZ_m^n} \big(\nu_j(y+e_j)- \nu_j(y-e_j)\big) f(x-y) \right )
\end{multline}

By Kahane's inequality~\cite{Kah64,Woj91} and the fact that $L_1(\bbZ_m^n,\mu)$ has cotype $2$ (see~\cite{Woj91}),
\begin{multline}\label{eq:use kahane}
\int_{\{-1,1\}^n}\left\|\sum_{j=1}^n \e_j\left( \sum_{y \in \bbZ_m^n} \big(\nu_j(y-e_j)- \nu_j(y+e_j)\big) f(x-y) \right )\right\|_{L_1(\bbZ_m^n,\mu)}^qd\tau(\e)\\
\gtrsim_q \left(\sum_{j=1}^n \left\|\sum_{y \in \bbZ_m^n} \big(\nu_j(y-e_j)- \nu_j(y+e_j)\big) f(x-y)\right\|_{L_1(\bbZ_m^n,\mu)}^2\right)^{q/2}
\end{multline}
Note that by the definition of $f$, for every $x\in \Z_m^n$ and $j\in [n]$ we have,
\begin{multline}\label{eq:get marginal}
 \left\|\sum_{y \in \bbZ_m^n} \big(\nu_j(y-e_j)- \nu_j(y+e_j)\big) f(x-y)\right\|_{L_1(\bbZ_m^n,\mu)}=\sum_{z\in \Z_m^n}|\nu_j(z-e_j)-\nu_j(z+e_j)|\\ \ge \sum_{w\in \Z_m}\left|\sum_{\substack{z\in \Z_m^n\\z_j=w}}\big(\nu_j(z-e_j)-\nu_j(z+e_j)\big)\right|
= \sum_{w\in \Z_m} |P_j(\nu_j)(w-1)-P_j(\nu_j)(w+1)|.
\end{multline}
Hence,
\begin{multline}\label{eq:use marginals}
\frac{1}{n}\sum_{j=1}^n \left\|\sum_{y \in \bbZ_m^n} \big(\nu_j(y-e_j)- \nu_j(y+e_j)\big) f(x-y)\right\|_{L_1(\bbZ_m^n,\mu)}^2\\\stackrel{\eqref{eq:get marginal}}{\ge} \left(\frac{1}{n}\sum_{j=1}^n \sum_{w\in \Z_m} |P_j(\nu_j)(w-1)-P_j(\nu_j)(w+1)|\right)^2\stackrel{\eqref{eq:1/A}}{\gtrsim}\frac{1}{A^2}.
\end{multline}
Finally, since $\|f(x)-f(y)\|_{L_1(\bbZ_m^n,\mu)}\le \|f(x)\|_{L_1(\bbZ_m^n,\mu)}+\|f(y)\|_{L_1(\bbZ_m^n,\mu)}\le  2$ for all $x,y\in \Z_m^n$, we can use the smoothing inequality~\eqref{eq:S} to deduce that
\begin{multline*}
S^q\gtrsim S^q\int_{E_\infty(\Z_m^n)}\|f(x)-f(y)\|^q_{L_1(\bbZ_m^n,\mu)}d\beta_2(x,y)\\\stackrel{\eqref{eq:S}}{\ge}
\int_{\Z_m^n}\int_{\{-1,1\}^n}\left\|\sum_{j=1}^n \e_j\left(  f*\nu_j(x+e_j)- f*\nu_j(x-e_j) \right )\right\|_{L_1(\bbZ_m^n,\mu)}^qd\tau(\e)d\mu(x)
{\gtrsim_q} \frac{n^{q/2}}{A^q},
\end{multline*}
where in the last step we used the identity~\eqref{eq:delta identity}, combined with the inequalities~\eqref{eq:use kahane} and~\eqref{eq:use marginals}.
The proof of Proposition~\ref{prop:lower-bound} is complete.
\end{proof}

\subsection{A sharp lower bound for $\ep$ averages: Proof of Proposition~\ref{prop:cubic-tight}}
Recall that $S(j,k)$ is defined in~\eqref{eq:def S(j,k)}, and in the setting of Proposition~\ref{prop:cubic-tight} we have:
$$
\nu_j(x)=\frac{\1_{S(j,k)}(x)}{k(k+1)^{n-1}}.
$$
Let $s\in\{(k+1)/2,(k+3)/2\}$ be an odd integer.  By the definition of $S(j,k)$ we have
$$
\sum_{\substack{x\in \Z_m^n\\ |x_j|>s}}\nu_j(x)=\frac{(k-s)(k+1)^{n-1}}{k(k+1)^{n-1}}\gtrsim 1.
$$
Plugging this estimate into~\eqref{eq:tail} we see that $A/k\gtrsim 1$, proving the first assertion in~\eqref{eq:lower for ours}.

To prove the second assertion of Proposition~\ref{prop:cubic-tight}, we shall apply the smoothing inequality~\eqref{eq:S}, as in Section~\ref{sec:lower general}, to the Banach space $X=L_1(\Z_m^n,\mu)$ and the function $f$ from~\eqref{eq:def f}, i.e., $f(x)=m^n\delta_{\{x\}}\in L_1(\Z_m^n,\mu)$. We shall use here notation from Section~\ref{sec:comb}.

In our setting, the value of
$$
\left\|\sum_{j=1}^{n}\e_{j}\left[{\ep f(x+e_{j})-\ep f(x-e_{j})}\right]\right\|_{L_1(\Z_m^n,\mu)}
$$
does not depend on $x\in \Z_m^n$ and $\e\in \{-1,1\}^n$. Thus the left hand side of~\eqref{eq:S} equals  (by Lemma~\ref{prop:sumepsilon}),
\[ \left \|\sum_j\left({ \ep f (e_j) - \ep f (-e_j)}\right) \right \|_{L_1(\Z_m^n,\mu)}^q= \left(\frac 1{k(k+1)^{n-1}} \sum_{y\in \mathbb S} \Big| \pMmkcor{y} \Big|\right)^q .\]
At the same time, as noted in Section~\ref{sec:lower general}, the right hand side of~\eqref{eq:S} is $\lesssim S^q$. It follows that
\begin{equation}\label{eq:S combinatorial}
S\gtrsim \frac 1{k(k+1)^{n-1}} \sum_{y\in \mathbb S} \Big| \pMmkcor{y} \Big|=\E\left[Z\right],
\end{equation}
where $Z=\left|\sum_{i=1}^n \xi_i\right|$, and $\{\xi_i\}_{i=1}^n$ are i.i.d. random variables taking the $0$ with probability $\frac{k-1}{k+1}$, and each of the values $\{-1,1\}$ with probability $\frac{1}{k+1}$. The last equality in~\eqref{eq:S combinatorial} is an immediate consequence of the definitions of $\mathbb S$ and $\Big| \pMmkcor{y} \Big|$.  Writing $p=\frac{2}{k+1}$, we have $\E\left[Z^2\right]=np$ and $\E\left[Z^4\right]=np+n(n-1)p^2$. By H\"older's inequality it then follows that we have $S\gtrsim \E[Z]\ge \|Z\|_2^3/\|Z\|_4^2\asymp\min\left\{\sqrt{np},np\right\}$, completing the proof of  Proposition~\ref{prop:cubic-tight}.\qed

\subsection{Symmetrization}\label{sec:sym} We do not know what is the smallest $m$ for which the metric cotype inequality~\eqref{eq:in thm} can be shown to hold true via a smoothing and approximation scheme: all we know is that it is between $n^{1+\frac{1}{q}}$ and $n^{\frac12+\frac{1}{q}}$. In this short section, we note that the special symmetric structure of the smoothing and approximation scheme that we used in the proof of Theorem~\ref{maintheorem} can be always assumed to hold true without loss of generality. This explains why our choice of convolution kernels is natural. Additionally, this fact might be useful in improving the lower bound on $m$ of Proposition~\ref{prop:lower-bound}, though we do not know how to use it in our current proof of Proposition~\ref{prop:lower-bound}.

For $\pi \in S_n$, i.e.,  a permutation of $[n]$, and $x\in \Z_m^n$, write
$$x^\pi\stackrel{\mathrm{def}}{=}\left(x_{\pi(1)},x_{\pi(2)},\ldots, x_{\pi(n)}\right).
$$
For $f:\Z_m^n\to X$ we define $f^\pi:\Z_m^n\to X$ by $f^\pi(x)=f(x^\pi)$. Note that if $\nu$ is a probability measure on $\Z_m^n$ then for all $x\in \Z_m^n$ we have
\begin{equation}\label{eq:conv perm}
f*\nu^\pi=\left(f^{\pi^{-1}}*\nu\right)^\pi.
\end{equation}
Indeed,
\begin{multline*}
f*\nu^\pi(x)=\int_{\Z_m^n} f(x-y)\nu(y^\pi)d\mu(y){=}\int_{\Z_m^n} f\left(x-z^{\pi^{-1}}\right)\nu(z)d\mu(z)\\
=\int_{\Z_m^n} f^{\pi^{-1}}\left(x^\pi-z\right)\nu(z)d\mu(z)=f^{\pi^{-1}}*\nu(x^\pi)=\left(f^{\pi^{-1}}*\nu\right)^\pi(x).
\end{multline*}
It follows from~\eqref{eq:conv perm} that
\begin{equation}\label{eq:norm identity}
\left\|f*\nu^\pi-f\right\|_{L_q(\Z_m^n,X)}=\left\|f^{\pi^{-1}}*\nu-f^{\pi^{-1}}\right\|_{L_q(\Z_m^n,X)}.
\end{equation}

\begin{lemma} \label{prop:symmetrization}
Assume that the probability measures $\nu_1,\ldots,\nu_n,\beta_1,\beta_2$ are a
$(q,A,S)$-smoothing and approximation scheme. Then there exist probability measures $\bar\nu_1,\ldots,\bar\nu_n$ on $\Z_m^n$ and two probability measures $\bar \beta_1,\bar\beta_2$ on $E_\infty(\Z_m^n)$, such that
\begin{enumerate}[1.]
\item The sequence $\bar \nu_1,\ldots, \bar \nu_n, \bar \beta_1,\bar
\beta_2$ is also a
$(q,A,S)$-smoothing and approximation scheme,
\item For any $j,h\in [n]$ we have $\bar\nu_j=\bar\nu_h^{(j,h)}$, where $(j,h)\in S_n$ is the transposition of $j$ and $h$.
\item For every $j,h\in [n]\setminus \{i\}$ we have $P_j(\bar\nu_i)=P_{h}(\bar\nu_i)$.
\end{enumerate}
\end{lemma}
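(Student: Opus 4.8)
The plan is to symmetrize the given scheme by averaging over all coordinate permutations. For $\rho\in S_n$ write, as in Section~\ref{sec:sym}, $\nu^\rho$ for the probability measure on $\Z_m^n$ with density $x\mapsto\nu(x^\rho)$ (the pushforward of $\nu$ under the coordinate permutation $x\mapsto x^{\rho^{-1}}$), and recall~\eqref{eq:conv perm}, i.e., $f*\nu^\rho=(f^{\rho^{-1}}*\nu)^\rho$. I define
$$
\bar\nu_i\stackrel{\mathrm{def}}{=}\frac{1}{n!}\sum_{\rho\in S_n}\nu_{\rho(i)}^{\rho^{-1}},\qquad i\in[n],
$$
and for $t\in\{1,2\}$ set $\bar\beta_t\stackrel{\mathrm{def}}{=}\frac{1}{n!}\sum_{\rho\in S_n}\rho_*\beta_t$, where $\rho_*\beta_t$ is the pushforward of $\beta_t$ under the map $(x,y)\mapsto(x^\rho,y^\rho)$, which is a bijection of $E_\infty(\Z_m^n)$ onto itself since $(x-y)^\rho\in\{-1,0,1\}^n$ whenever $x-y\in\{-1,0,1\}^n$. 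Being convex combinations of pushforwards of probability measures under bijections, all of $\bar\nu_1,\ldots,\bar\nu_n,\bar\beta_1,\bar\beta_2$ are again probability measures on the appropriate spaces.

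The first step I would carry out is the equivariance identity $\bar\nu_j^\pi=\bar\nu_{\pi(j)}$ for every $\pi\in S_n$ and $j\in[n]$. This follows from the definition of $\bar\nu_j$: using $(x^\pi)^{\rho^{-1}}=x^{\pi\rho^{-1}}$ and substituting $\rho=\sigma\pi$ (a bijection of $S_n$) one gets $\rho(j)=\sigma(\pi(j))$ and $\pi(\sigma\pi)^{-1}=\sigma^{-1}$, so the averaging sum defining $\bar\nu_j(x^\pi)$ becomes the averaging sum defining $\bar\nu_{\pi(j)}(x)$. Conditions~(2) and~(3) are then immediate: applying the identity with the transposition $\pi=(j,h)$ gives $\bar\nu_h^{(j,h)}=\bar\nu_{(j,h)(h)}=\bar\nu_j$, which is~(2); and for $i\in[n]\setminus\{j,h\}$ we get $\bar\nu_i^{(j,h)}=\bar\nu_{(j,h)(i)}=\bar\nu_i$, so $\bar\nu_i$ is invariant under interchanging the $j$-th and $h$-th coordinates, whence $P_j(\bar\nu_i)=P_h(\bar\nu_i)$, which is~(3).

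It remains to verify condition~(1), namely that $\bar\nu_1,\ldots,\bar\nu_n,\bar\beta_1,\bar\beta_2$ satisfy~\eqref{eq:A} and~\eqref{eq:S} with the \emph{same} constants $A,S$. For the approximation property, using~\eqref{eq:conv perm} with $\pi=\rho^{-1}$ one writes $f*\bar\nu_i-f=\frac1{n!}\sum_\rho\bigl(f^\rho*\nu_{\rho(i)}-f^\rho\bigr)^{\rho^{-1}}$, the subtracted copy of $f$ being $(f^\rho)^{\rho^{-1}}=f$. Applying Jensen's inequality for $t\mapsto t^q$ in the average over $\rho$, then the invariance $\int_{\Z_m^n}\|g^{\rho^{-1}}\|_X^q\,d\mu=\int_{\Z_m^n}\|g\|_X^q\,d\mu$ (coordinate permutations preserve $\mu$), then re-indexing $j=\rho(i)$ so that $\frac1n\sum_i(\cdots)=\frac1n\sum_j(\cdots)$ for each fixed $\rho$, and finally~\eqref{eq:A} for the function $f^\rho$, one obtains $\frac1n\sum_i\|f*\bar\nu_i-f\|_{L_q(\mu)}^q\le A^q\,\frac1{n!}\sum_\rho\int\|f^\rho(x)-f^\rho(y)\|_X^q\,d\beta_1=A^q\int\|f(x)-f(y)\|_X^q\,d\bar\beta_1$, using $\int\|f^\rho(x)-f^\rho(y)\|_X^q\,d\beta_1=\int\|f(x)-f(y)\|_X^q\,d(\rho_*\beta_1)$. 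The smoothing property is handled the same way with one extra bookkeeping step: since $e_i^{\rho^{-1}}=e_{\rho(i)}$, one gets $f*\bar\nu_i(x\pm e_i)=\frac1{n!}\sum_\rho\bigl(f^\rho*\nu_{\rho(i)}(\cdot\pm e_{\rho(i)})\bigr)^{\rho^{-1}}(x)$, so after Jensen and the $\mu$-invariance the inner integrand becomes $\bigl\|\sum_i\e_i\bigl(f^\rho*\nu_{\rho(i)}(x+e_{\rho(i)})-f^\rho*\nu_{\rho(i)}(x-e_{\rho(i)})\bigr)\bigr\|_X^q$; substituting $j=\rho(i)$ and $\e'_j=\e_{\rho^{-1}(j)}$ (for fixed $\rho$ a $\tau$-preserving change of variables on $\{-1,1\}^n$) turns this into the left-hand side of~\eqref{eq:S} for $f^\rho$, and applying~\eqref{eq:S} together with the pushforward $\rho_*\beta_2$ and averaging over $\rho$ gives the bound with constant $S$ and measure $\bar\beta_2$.

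The main obstacle will be the smoothing inequality: one must carefully track how the basis vectors $e_i$ and the sign pattern $\e\in\{-1,1\}^n$ behave under the coordinate permutation $\rho$ — namely $e_i^{\rho^{-1}}=e_{\rho(i)}$ and $\e\mapsto\e\circ\rho^{-1}$ — so as to recognize that, for each fixed $\rho$, the symmetrized smoothed Rademacher sum is \emph{exactly} the smoothed Rademacher sum of $f^\rho$ with respect to the original kernels $\nu_1,\ldots,\nu_n$. Everything else (Jensen, the permutation-invariance of $\mu$ and $\tau$, and the fact that coordinate permutations act bijectively on $\Z_m^n$ and on $E_\infty(\Z_m^n)$) is routine.
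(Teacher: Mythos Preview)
Your proposal is correct and follows essentially the same approach as the paper: the same symmetrized kernels $\bar\nu_i=\frac{1}{n!}\sum_{\rho}\nu_{\rho(i)}^{\rho^{-1}}$ and the same permutation-averaged edge measures $\bar\beta_1,\bar\beta_2$, with the approximation and smoothing inequalities verified via Jensen, the identity~\eqref{eq:conv perm}, and the permutation-invariance of $\mu$ and $\tau$. Your explicit formulation of the equivariance identity $\bar\nu_j^\pi=\bar\nu_{\pi(j)}$ is a clean way to read off conditions~(2) and~(3) (the paper simply asserts these follow from the definition), but this is a presentational rather than a mathematical difference.
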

\begin{proof} Define for $j\in [n]$,
\begin{equation}\label{eq:def bar nu}
\bar \nu_j\stackrel{\mathrm{def}}{=}\frac{1}{n!}\sum_{\pi\in
S_n} \nu_{\pi(j)}^{\pi^{-1}}.\end{equation}
We
also define for $(x,y)\in E_\infty(\Z_m^n)$,
\begin{equation}\label{eq:def bar beta}
\bar\beta_1(x,y)\stackrel{\mathrm{def}}{=} \frac{1}{n!}\sum_{\pi\in
S_n} \beta_1\left(x^\pi,y^\pi\right)\quad\mathrm{and}\quad
\bar\beta_2(x,y)\stackrel{\mathrm{def}}{=} \frac{1}{n!}\sum_{\pi\in
S_n} \beta_2\left(x^\pi,y^\pi\right).
\end{equation}

Fix $f:\Z_m^n\to X$ and assume the validity of the approximation and smoothing inequalities~\eqref{eq:A}, \eqref{eq:S}. Then, by the convexity of $\|\cdot\|_X^q$,
\begin{multline}\label{eq:symmetrized A}
\frac{1}{n}\sum_{j=1}^n\int_{\Z_m^n}\|f*\bar\nu_j-f\|_X^qd\mu\stackrel{\eqref{eq:norm
identity}\wedge\eqref{eq:def bar nu}}{\le}\frac{1}{n!}\sum_{\pi\in
S_n}\frac{1}{n}\sum_{j=1}^n\left\|f^{\pi}*\nu_{\pi(j)}-f^{\pi}\right\|_{L_q(\Z_m^n,X)}^q\\
\stackrel{\eqref{eq:A}\wedge\eqref{eq:def
bar beta}}{\le} A^q\int_{E_\infty(\Z_m^n)}\|f(x)-f(y)\|_X^qd\bar
\beta_1(x,y).
\end{multline}
This is precisely the approximation property for $\bar \nu_1,\ldots, \bar \nu_n, \bar \beta_1,\bar
\beta_2$.

Similarly,
\begin{multline}\label{eq:symmetrized B}
\int_{\Z_m^n}\int_{\{-1,1\}^n}\left\|\sum_{j=1}^n \e_j\Big(f*\bar\nu_j(x+e_j)-f*\bar \nu_j(x-e_j)\Big)\right\|_X^qd\tau(\e)d\mu(x)\\
\stackrel{\eqref{eq:def bar beta}}{\le} \frac{1}{n!}\sum_{\pi\in S_n}\int_{\Z_m^n}\int_{\{-1,1\}^n}
\left\|\sum_{j=1}^n \e_j\Big(f*\nu_{\pi(j)}^{\pi^{-1}}(x+e_j)-f*\nu_{\pi(j)}^{\pi^{-1}}(x-e_j)\Big)\right\|_X^qd\tau(\e)d\mu(x).
\end{multline}
Note that
\begin{multline}\label{eq:changes}
\sum_{j=1}^n \e_j\Big(f*\nu_{\pi(j)}^{\pi^{-1}}(x+e_j)-f*\nu_{\pi(j)}^{\pi^{-1}}(x-e_j)\Big)
\\\stackrel{\eqref{eq:conv perm}}{=}\sum_{i=1}^n \e_{\pi^{-1}(i)}\left(f^{\pi}*\nu_{i}\left(x^{\pi^{-1}}+e_{i}\right)-
f^{\pi}*\nu_{i}\left(x^{\pi^{-1}}-e_{i}\right)\right),
\end{multline}
where we made the change of variable $j=\pi^{-1}(i)$ and used the fact that $e_r^{\pi^{-1}}=e_{\pi(r)}$ for all $r\in [n]$ and $\pi\in S_n$.
Hence,
\begin{multline}\label{use changes}
\int_{\Z_m^n}\int_{\{-1,1\}^n}
\left\|\sum_{j=1}^n \e_j\Big(f*\nu_{\pi(j)}^{\pi^{-1}}(x+e_j)-f*\nu_{\pi(j)}^{\pi^{-1}}(x-e_j)\Big)\right\|_X^qd\tau(\e)d\mu(x)\\
\stackrel{\eqref{eq:changes}}{=}\int_{\Z_m^n}\int_{\{-1,1\}^n}
\left\|\sum_{r=1}^n \e_r\Big(f^\pi*\nu_{r}(x+e_r)-f^\pi*\nu_{r}(x-e_r)\Big)\right\|_X^qd\tau(\e)d\mu(x).
\end{multline}
The  smoothing inequality for $\bar \nu_1,\ldots, \bar \nu_n, \bar \beta_1,\bar
\beta_2$ now follows:
\begin{multline*}
\int_{\Z_m^n}\int_{\{-1,1\}^n}\left\|\sum_{j=1}^n \e_j\Big(f*\bar\nu_j(x+e_j)-f*\bar \nu_j(x-e_j)\Big)\right\|_X^qd\tau(\e)d\mu(x)\\
\stackrel{\eqref{eq:symmetrized B}\wedge\eqref{use changes}\wedge\eqref{eq:S}}{\le} S^q\int_{E_\infty(\Z_m^n)}\|f(x)-f(y)\|_X^qd\bar\beta_2(x,y).
\end{multline*}
Assertions 2. and 3. of Lemma~\ref{prop:symmetrization} follow directly from the definition~\eqref{eq:def bar nu}.
\end{proof}




\bibliographystyle{abbrv}
\bibliography{GMN}

\end{document}